\documentclass[11pt,a4paper]{article}

%\UseRawInputEncoding
\usepackage{tikz}
\usepackage{amsthm}
\usepackage{appendix}
\usepackage{authblk}
\usepackage{subfigure}
\usepackage[english]{babel}
\usepackage{bbm}
\usepackage{graphicx}
\usepackage[center]{caption2}
\usepackage{amsfonts,amssymb,amsmath,latexsym,amsthm}
\usepackage{multirow}
\usepackage{enumerate}
\usepackage{geometry}
\usepackage[usenames,dvipsnames]{pstricks}
\usepackage{epsfig}
\usepackage{pst-grad} % For gradients
\usepackage{pst-plot} % For axes
\usepackage[space]{grffile} % For spaces in paths
\usepackage{enumitem}% delete small point before items
\usepackage{etoolbox} % For spaces in paths
\makeatletter % For spaces in paths
\patchcmd\Gread@eps{\@inputcheck#1 }{\@inputcheck"#1"\relax}{}{}

\topmargin  = -0.4 in \oddsidemargin = 0.25 in
\setlength{\textheight}{9.5in} \setlength{\textwidth}{6in}
\setlength{\unitlength}{1.0 mm}

\def\deg{\mathrm{deg}}

\newtheorem{thm}{Theorem}[section]
\newtheorem{dfn}[thm]{Definition}

\newtheorem{lem}[thm]{Lemma}

\newtheorem{probl}[thm]{Problem}
\newtheorem{claim}{Claim}

\let\svthefootnote\thefootnote
\newcommand\blankfootnote[1]{%
	\let\thefootnote\relax\footnotetext{#1}%
	\let\thefootnote\svthefootnote%
}

\begin{document}

\pdfoutput=1%FOR ARXIV

\title{\Large Undecidability of polynomial inequalities in tournaments}
	
\date{}
	
\author[$^1$]{Hao Chen}
\author[$^1$]{Yupeng Lin}
\author[$^1$]{Jie Ma}
\author[$^2$]{Fan Wei}

\affil[$^1$]{School of Mathematical Sciences, University of Science and Technology of China, Hefei, Anhui, 230026, China} 
\affil[$^2$]{Department of Mathemaics, Duke University, 120 Science Drive, Durham, NC 27710, USA}

\maketitle

\begin{abstract}
Many fundamental problems in extremal combinatorics are equivalent to proving certain polynomial inequalities in graph homomorphism densities.
In 2011, a breakthrough result by Hatami and Norine showed that it is undecidable to verify polynomial inequalities in graph homomorphism densities.
Recently, Blekherman, Raymond and Wei extended this result by showing that it is also undecidable to determine the validity of polynomial inequalities in homomorphism densities for weighted graphs with edge weights taking real values. These two results  resolved  a question of Lov\'asz.
In this paper, we consider the problem of determining the validity of
polynomial inequalities in digraph homomorphism densities for tournaments.
We prove that the answer to this problem is also undecidable.
\end{abstract}

\section{Introduction}

Let $H$ and $G$ be two unweighted graphs. A {\it homomorphism} from $H$ to $G$ is defined as a mapping $f: V(H) \rightarrow V(G)$ such that if $uv \in E(H)$, then $f(u)f(v) \in E(G)$.
Let $G_{\boldsymbol{w}}$ denote an edge-weighted graph with edge weights $\boldsymbol{w}: E(G) \rightarrow \mathbb{R}$. The (weighted) {\it homomorphism number} from $H$ to $G_{\boldsymbol{w}}$ is defined as
\[
\operatorname{hom}(H, G_{\boldsymbol{w}}):=\sum_{\substack{\varphi \text { is a homomorphism} \\  \text{from } H \text{ to } G}} ~ \prod_{ij \in E(H)}  \boldsymbol{w}_{\varphi(i) \varphi(j)}.
\]
The {\it homomorphism density} from $H$ to $G_{\boldsymbol{w}}$ is then defined as
\[
t(H,G_{\boldsymbol{w}}):=\frac{{\rm hom}(H,G_{\boldsymbol{w}})}{|V(G)|^{|V(H)|}}.
\]

Homomorphisms and homomorphism densities play crucial roles in extremal combinatorics, as they are closely connected to the convergence of graph sequences, graph limits, and various graph properties. Additionally, they have applications in fields such as constraint satisfaction problems and database theory.

One of the central topics in extremal combinatorics is the study of algebraic inequalities between homomorphism densities. Define a {\it quantum graph} as a formal linear combination of simple graphs: $f = \sum_i c_i H_i$, where $c_i \in \mathbb{R}$ and each $H_i$ is a finite simple graph. The homomorphism density for quantum graphs is defined as
$t(f,G_{\boldsymbol{w}}) = \sum_{i} c_i t(H_i, G_{\boldsymbol{w}})$.

Lov\'asz posed the following fundamental question regarding non-negativity of homomorphism densities:

\begin{probl}[Lov\'asz~\cite{LLP}, Problem 20]\label{prob:kernal}
Which quantum graphs $f$ satisfy $t(f, G_{\boldsymbol{w}}) \geq 0$ for every weighted graph $G_{\boldsymbol{w}}$ and every possible weight function $\boldsymbol{w}$ (in some family)?
\end{probl}

This problem encapsulates many open questions in extremal combinatorics by fixing specific quantum graphs $f$. Notable examples include Sidorenko's conjecture~\cite{S93} for $\boldsymbol{w} \equiv 1$, which asserts that certain bipartite graphs have non-negative homomorphism densities, and Lov\'asz's Positive Graph Conjecture~\cite{LLP}, which seeks to characterize all simple graphs $H$ for which \( t(H, G_{\boldsymbol{w}}) \geq 0 \) holds universally for any $G$ and any ${\boldsymbol{w}}: E(G) \to \mathbb{R}$.

Several approaches have been developed to prove such inequalities, including methods involving sums of squares and semi-definite programming \cite{FLS07, LS12, R07}. These techniques have been instrumental in advancing our understanding of non-negativity conditions for homomorphism densities.

Lov\'asz further proposed a program aimed at finding unified certificates for non-negativity:

\begin{probl}[Lov\'asz~\cite{LLP}, Problem 21]\label{prob2}
Is it true that for every quantum graph $f$ where \( t(f, G_{\boldsymbol{w}}) \geq 0 \) for all weighted graphs \( G_{\boldsymbol{w}} \), there exist quantum graphs \( g \) and \( h \), each expressible as a sum of squares of labeled quantum graphs, such that \( f + gf = h \)?
\end{probl}

A significant breakthrough was made by Hatami and Norine~\cite{HN}, who showed that determining whether a polynomial inequality between homomorphism densities holds for all weights $\boldsymbol{w}$ in the range \( [0,1] \) is undecidable. This result answers Problem~\ref{prob2} negatively in a strong sense. More recently, Blekherman, Raymondand Wei~\cite{BRW} extended this result by showing undecidability even when negative edge weights are allowed.

In this paper, we focus on polynomial inequalities involving homomorphism densities for tournaments and digraphs -- the natural directed analogs of complete graphs and graphs respectively.
A {\it tournament} is an orientation of a complete graph.
Let \( F \) be a digraph and \( T \) be a tournament.
A {\it homomorphism} from a digraph \( F \) to a tournament \( T \) is a mapping \( f: V(F) \rightarrow V(T) \), such that for any directed arc \( (u,v) \in E(F)\), the image $(f(u), f(v))$ is also a directed arc in $T$.
We define the {\it homomorphism density} from \( F \) to \( T \) as
\[
t(F,T):=\frac{{\rm hom}(F,T)}{|V(T)|^{|V(F)|}}.
\]

Studying homomorphism densities for tournaments and digraphs is particularly interesting because they exhibit rich structural properties that differ significantly from undirected graphs. For example, tournaments are highly asymmetric structures where directionality plays a crucial role in determining the behavior of homomorphisms. This leads to new types of extremal problems and inequalities that do not have direct analogs in the undirected case. Additionally, several conjectures and results in extremal combinatorics, such as Sidorenko's conjecture, have natural extensions to tournaments with intriguing differences in behavior~(see e.g.,~\cite{FHMZ,SSZ}). These differences make digraphs and tournaments fruitful ground for discovering new phenomena in graph theory
(see e.g., \cite{CGKN, GKLV, MT}).

Inspired by Lov\'asz's question for graph homomorphism densities, we pose an analogous question for tournaments:

\begin{probl}\label{prot}
Given finite digraphs \( D_1,\ldots,D_k \) and real numbers \( c_1,\ldots,c_k \),
let $g=\sum_{i} c_iD_i$ denote the corresponding {\it quantum digraph}.
Does the inequality
\[ t(g,T):=\sum_{i=1}^{k} c_{i}\cdot t(D_{i}, T)\geq 0  ~~~\text {hold for all tournaments } T?
\]
\end{probl}

Similar as in the graph case, any polynomial function of digraph homomorphism densities for tournaments can be equivalently represented as a linear function.\footnote{For two digraphs $D_{1}$ and $ D_{2}$, let $D_{1}D_{2}$ denote their disjoint union.
Analogous to graph homomorphism densities,
it follows that $t\left(D_{1} D_{2}, T\right)=t\left(D_{1}, T\right) t\left(D_{2}, T\right)$ for any tournament $T$. }
Thus the above problem is also equivalent to asking whether or not any given polynomial inequality of digraph homomorphism densities holds for all tournaments

Our main result shows that determining whether such an inequality holds is undecidable.

\begin{thm}\label{ourwork}
There is no algorithm which could always make a correct decision on the following problem.
\begin{itemize}
    \item[-] {\rm Instance}: A positive integer \( k \), finite digraphs \( D_1,\ldots,D_k \), and real numbers \( c_1,\ldots,c_k \).
    \item[-] {{\rm Output}}: ``Yes" if the inequality
    $\sum_{i=1}^{k} c_{i}\cdot t(D_{i}, T)\geq 0$
    hold for all tournaments $T$, and ``No" otherwise.
\end{itemize}
In other words, it is undecidable whether a quantum digraph is always non-negative or not.
\end{thm}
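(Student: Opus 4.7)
The plan is to reduce from the weighted-graph undecidability theorem of Blekherman--Raymond--Wei~\cite{BRW}. Given a quantum graph $f=\sum_i c_i H_i$ whose non-negativity over real-weighted graphs is undecidable, I aim to construct a computable quantum digraph $\Phi(f)=\sum_j c'_j D_j$ such that $t(f,G_{\mathbf w})\ge 0$ for every real-weighted graph $G_{\mathbf w}$ if and only if $t(\Phi(f),T)\ge 0$ for every tournament $T$. Since the graph-side problem is undecidable, so is the tournament-side one.

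The main tool is a pseudo-random blow-up that realises weighted structures as actual tournaments. Given a matrix $P=(p_{ij})$ with $p_{ij}+p_{ji}=1$ and $p_{ij}\in[0,1]$, form a random tournament $T^{(m)}(P)$ on $[n]\times[m]$ by orienting each cross-pair $((i,a),(j,b))$, $i\ne j$, independently from the first to the second with probability $p_{ij}$ and installing a transitive tournament inside each blob $\{i\}\times[m]$. A standard Azuma-style concentration argument shows that for every fixed digraph $D$ the density $t(D,T^{(m)}(P))$ concentrates, as $m\to\infty$, around an explicit polynomial $\tau(D,P)$ in the entries of $P$. Thus a quantum-digraph inequality is asymptotically valid on these tournaments if and only if the corresponding polynomial is non-negative on the tournament polytope $\{P:p_{ij}+p_{ji}=1,\; p_{ij}\in[0,1]\}$.

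The key obstacle is bridging the antisymmetric constraint $p_{ij}+p_{ji}=1$ intrinsic to tournaments with the \emph{symmetric} real-weighted graphs of BRW. I plan to handle this via a bipartite doubling trick: double each vertex $i$ of the weighted graph into an $A$-copy $a_i$ and a $B$-copy $b_i$, and encode the symmetric weight $w_{ij}$ as the cross-orientation probability $p(a_i\to b_j)=p(a_j\to b_i)$, depending symmetrically on $w_{ij}$. This is compatible with the tournament constraint because antisymmetry only couples $(a_i,b_j)$ with $(b_j,a_i)$ and not with the distinct pair $\{a_j,b_i\}$; inside each of $A$ and $B$ I install a transitive tournament. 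Each graph $H$ in the input is then replaced by a digraph $\widetilde H$ on a doubled vertex set whose homomorphism density in this tournament recovers, up to an explicit computable positive constant, the weighted-graph density $t(H,G_{\mathbf w})$; real weights appearing in BRW instances are first rescaled into $[-1,1]$ using the homogeneity $t(H,\lambda\mathbf w)=\lambda^{|E(H)|}t(H,\mathbf w)$.

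The subtlest step will be ensuring that $t(\Phi(f),T)\ge 0$ holds for \emph{every} tournament $T$, not only those of the doubled blow-up form. For this I would construct $\Phi(f)$ so that each tournament $T$ is itself canonically associated with a weighted graph $G_{\mathbf w(T)}$ satisfying $t(\Phi(f),T)=c_0\cdot t(f,G_{\mathbf w(T)})$ for an explicit constant $c_0>0$, which reduces the ``for all tournaments'' direction of the reduction to the corresponding BRW statement over the resulting canonical family of weighted graphs. Verifying the existence of such a $\Phi$ -- essentially a Positivstellensatz-flavoured compatibility between the tournament polytope and the BRW weight space -- is where I expect the bulk of the technical work to lie.
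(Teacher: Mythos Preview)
Your proposal identifies the right shape of reduction but leaves open precisely the step that constitutes the heart of the argument, and the mechanism you sketch does not obviously close it.

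The blow-up plus bipartite doubling handles only one direction: given a real-weighted graph $G_{\mathbf w}$ (with weights rescaled into a bounded interval), you can manufacture a sequence of tournaments whose $\Phi(f)$-density converges to a positive multiple of $t(f,G_{\mathbf w})$. That gives you ``$t(\Phi(f),T)\ge 0$ for all $T$ $\Rightarrow$ $t(f,G_{\mathbf w})\ge 0$ for the weighted graphs you can realise this way''. The converse direction requires that for an \emph{arbitrary} tournament $T$ --- one with no $A/B$ bipartition, no blob structure, nothing --- the quantity $t(\Phi(f),T)$ still equals $c_0\cdot t(f,G_{\mathbf w(T)})$ for some weighted graph $G_{\mathbf w(T)}$. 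Your doubling trick lives on the side of the host (it tells you how to build special tournaments), so it says nothing about what $t(\widetilde H,T)$ looks like for a generic $T$. You acknowledge this is ``the subtlest step'' and defer it to a ``Positivstellensatz-flavoured compatibility'' that you do not specify; as written this is a hope, not an argument. There is also a secondary issue: reducing directly from BRW means you must cope with unbounded real weights, and the homogeneity rescaling $t(H,\lambda\mathbf w)=\lambda^{|E(H)|}t(H,\mathbf w)$ you invoke does not act uniformly across terms of a non-homogeneous quantum graph, so it is not clear you may restrict to $[-1,1]$.

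For comparison, the paper does not reduce from the BRW theorem at all; it reduces (as BRW themselves do) from Matiyasevich's theorem via a lemma asserting undecidability over the set $\{1/n:n\in\mathbb N^+\}$. The device that makes the ``for all tournaments'' direction go through is a \emph{gadget-level} symmetrisation rather than a host-level doubling: one builds a two-rooted digraph $F^{\dagger\bullet\bullet}$ by gluing two copies of a base gadget with their roots swapped, so that the conditional density $t_{x,y}(F^{\dagger\bullet\bullet},T)$ is automatically symmetric in $x,y$ for \emph{every} tournament $T$. This yields, for every $T$, a genuine symmetric matrix $M(T)$ with $t(D_{\ell,F^{\dagger\bullet\bullet}},T)=t(C_\ell,M(T))=\sum_j\lambda_j^\ell$; the required inequalities then reduce to power-sum inequalities whose extremal structure is integral. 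In other words, the paper solves exactly the step you flagged as open, by moving the symmetrisation from the host to the pattern; your proposal would need an analogous idea before it becomes a proof.
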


\section{Preliminaries, Proof Ideas, and Organization}

\subsection{Notations}
Let $G$ be a graph (or digraph) with vertex set $V(G)$ and edge set (or arc set) $E(G)$. For any subset $A \subseteq V(G)$, we denote by $G[A]$ the subgraph (or sub-digraph) of $G$ induced by $A$. We write $[k]$ to represent the set $\{1,2,\dots,k\}$ for any positive integer $k \in \mathbb{N}^+$.

Throughout this paper, we use both $(u,v)$ and $u \to v$ to denote an arc from vertex $u$ to vertex $v$. For a vertex $v$ in a digraph $H$, we define the out-degree of $v$ as
\[
d_H^+(v):=|\{u \in V(H) \mid (v,u) \in E(H)\}|,
\]
and the in-degree of $v$ as
\[
d_H^-(v):=|\{u \in V(H) \mid (u,v) \in E(H)\}|.
\]
Given a digraph $F$, let $\Delta^+(F)$ denote the maximum out-degree, and $\Delta^-(F)$ the maximum in-degree, of vertices in $V(F)$.

A {\it homomorphism} from a digraph \( F_1 \) to a digraph \( F_2 \) is a mapping \( f: V(F_1) \rightarrow V(F_2) \), such that for any directed arc \( (u,v) \in E(F_1)\), the image $(f(u), f(v))$ is also a directed arc in $F_2$. We denote by ${\rm Hom}(F_1,F_2)$ the set of all homomorphisms from $F_1$ to $F_2$.

A {\it rooted digraph} is a digraph with one or more distinguished vertices called {\it roots}. Let $F^{\bullet...\bullet}$  (with $k$ dots $\bullet$) be a rooted digraph with $k$ roots $v_1,\dots,v_k$.
For vertices $x_1, x_2, \dots, x_k$ in a digraph $T$, a {\it conditional homomorphism on $x_1, x_2, \dots, x_k$} from $F^{\bullet...\bullet}$ to $T$ is a homomorphism $\phi: V(F) \to V(T)$ such that $\phi(v_i) = x_i$ for each root vertex $v_i$.
We denote by ${\rm Hom}_{x_1,\dots,x_k}(F^{\bullet...\bullet}, T)$ the set of all conditional homomorphisms on $x_1, x_2, \dots, x_k$, and define \[{\rm hom}_{x_1,\dots,x_k}(F^{\bullet...\bullet}, T) := |{\rm Hom}_{x_1,\dots,x_k}(F^{\bullet...\bullet}, T)|.\] Moreover, the {\it conditional homomorphism density on $x_1,\dots,x_k$} of $F^{\bullet...\bullet}$ in the digraph $T$ is defined as
\[
t_{x_1,\dots,x_k}(F^{\bullet...\bullet},T)=\frac{{\rm hom}_{x_1,\dots,x_k}(F^{\bullet...\bullet},T)}{|V(T)|^{|V(F)|-k}}.
\]

\subsection{Challenges and proof ideas}

Similar to the proofs of Hatami and Norine~\cite{HN}, as well as Blekherman, Raymond, and Wei~\cite{BRW}, our goal is to reduce the problem to Matiyasevich's undecidability result on Hilbert's 10th problem~\cite{Mat}, stated below.

\begin{thm}[Matiyasevich~\cite{Mat}]\label{H10}
Given a positive integer $K$ and a polynomial $p(x_1, \dots, x_k)$ with integer coefficients, the problem of determining whether there exist $x_1, \dots, x_k \in \mathbb{Z}$ such that $p(x_1, \dots, x_k) < 0$ is undecidable.
\end{thm}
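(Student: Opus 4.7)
The plan is to reduce Matiyasevich's Theorem~\ref{H10} to Theorem~\ref{ourwork}, following the strategy of Hatami--Norine~\cite{HN} and Blekherman--Raymond--Wei~\cite{BRW} but adapted to the rigid combinatorial world of tournaments. Given an integer-coefficient polynomial $p(x_1,\dots,x_k)$, I intend to construct a quantum digraph $g=g(p)$ with the property that $t(g,T)\geq 0$ holds for every tournament $T$ if and only if $p(x_1,\dots,x_k)\geq 0$ for every $(x_1,\dots,x_k)\in\mathbb{Z}^k$; an algorithm deciding the tournament problem would then solve H10, a contradiction.

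First I would set up a family of ``parameter gadgets'' whose densities serve as real-valued coordinates on tournaments. A natural starting point is the normalised out-degree $\sigma_T(v) := d^+_T(v)/|V(T)|\in [0,1]$: using rooted digraphs and the identity $t(D^{\bullet},T)=|V(T)|^{-1}\sum_x t_x(D^{\bullet},T)$, arbitrary moments of $\sigma_T$ can be read off as densities of small out-stars, and combined with the multiplicativity $t(D_1D_2,T)=t(D_1,T)\,t(D_2,T)$, every polynomial expression in these moments is itself realised as $t(g',T)$ for some explicit quantum digraph $g'$. More generally, by considering rooted digraph gadgets on several labelled vertices one obtains a rich algebra of controllable parameters attached to each tournament, sufficient to encode the $k$ real-valued ``variables'' we will later pin to integers.

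The main obstacle is \emph{integrality forcing}: I must design auxiliary non-negative quantum digraphs whose density $t(\cdot,T)$ vanishes (or is minimised) precisely on tournaments whose parameter tuple corresponds, after scaling by $|V(T)|$, to a tuple of integers. The Hatami--Norine recipe does this by pinning extremal graphs to blow-ups of a fixed template; the analogue here should pin extremal tournaments to balanced blow-ups of a fixed transitive tournament on a few vertices, whose part-size vector then encodes the integers $x_i$. The difficulty is that tournaments have fixed arc density $\tfrac12$, so the edge-weight wiggle room exploited in~\cite{HN,BRW} is unavailable; instead I plan to exploit the richer palette of orientation patterns on triples and $4$-tuples (transitive versus cyclic triangles, consistency of orderings, iterated blow-ups of transitive tournaments) to engineer sum-of-squares-type quantum digraphs whose zero-locus is exactly the desired family of blow-ups. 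Characterising the extremal tournaments for these gadgets, and showing that near-extremisers are close to blow-ups with integer-encoding part sizes, is where I expect the bulk of the technical work.

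Finally, I would assemble $g(p):=g_{\mathrm{poly}}+\lambda\, g_{\mathrm{int}}$, where $g_{\mathrm{poly}}$ is a quantum digraph evaluating $p$ on the chosen parameters, $g_{\mathrm{int}}$ is the integrality-forcing gadget, and $\lambda$ is chosen large enough to dominate $|g_{\mathrm{poly}}|$ on any a priori bounded parameter box. For the ``only if'' direction, given integers $(x_1,\dots,x_k)$ with $p<0$, I construct an explicit blow-up tournament $T$ (with parts sized to encode the $x_i$) and verify $t(g,T)<0$. For the ``if'' direction, a compactness argument as $|V(T)|\to\infty$ combined with the integrality-forcing term shows that any alleged violating tournament must be close to a blow-up encoding some integer tuple, on which $p$ must therefore be negative. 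This yields the desired equivalence and the reduction from H10, proving Theorem~\ref{ourwork}.
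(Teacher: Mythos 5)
There is a fundamental problem here: you have not proposed a proof of the statement in question at all. Theorem~\ref{H10} \emph{is} Matiyasevich's theorem (the negative resolution of Hilbert's Tenth Problem, in the form ``does $p$ take a negative value at an integer point''). The paper cites it as an external black box from~\cite{Mat} and offers no proof; its actual proof goes through the MRDP theorem --- every recursively enumerable set is Diophantine --- and a reduction from the halting problem, which is a result in computability theory and number theory that cannot be extracted from any amount of tournament or graph-density machinery. Your proposal instead constructs a quantum digraph $g(p)$ such that $t(g,T)\geq 0$ for all tournaments $T$ iff $p\geq 0$ on $\mathbb{Z}^k$, and then concludes ``an algorithm deciding the tournament problem would then solve H10, a contradiction.'' That final step \emph{presupposes} that H10 is undecidable, which is exactly the statement you were asked to prove; the argument is circular. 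What you have sketched is a reduction in the direction H10 $\Rightarrow$ Theorem~\ref{ourwork}, i.e.\ a proof strategy for the paper's \emph{main theorem} assuming Matiyasevich, not a proof of Matiyasevich.

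As a secondary remark: even read as a proposal for Theorem~\ref{ourwork}, your route (pinning extremal tournaments to blow-ups of transitive tournaments via sum-of-squares integrality gadgets, in the spirit of Hatami--Norine) is not what the paper does, and the step you yourself flag as ``the bulk of the technical work'' --- characterising extremal and near-extremal tournaments for such gadgets --- is precisely the part that is unsupported and that the authors avoid. The paper instead builds symmetrised necklace digraphs $D_{\ell,F_i^{\dagger\bullet\bullet}}$ whose densities are power sums $\sum_j\lambda_j^{\ell}$ of eigenvalues of an associated symmetric matrix $M_i(T)$, realises the integral points $(1/r,1/r^2)$ via host tournaments $T_G^{\bigstar}$ built from Alon's triangle-free $(n,d,\lambda)$-graphs, and imports the convexity/undecidability lemmas of~\cite{BRW}. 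But none of this bears on Theorem~\ref{H10} itself, which must simply be quoted.
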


The idea of reducing to the polynomial undecidability problem was introduced by Ioannidis and Ramakrishnan~\cite{IR95} to show that it is undecidable to verify linear inequalities between homomorphism {\it numbers}. The main challenge in reducing linear inequalities between homomorphism {\it densities} is ensuring that extremal values occur at integer points, as Matiyasevich's undecidability result applies only to polynomials with integer variables. However, graph densities are generally not integers and are unlikely to be so in most cases.

One of the key insights from Hatami and Norine~\cite{HN} is to exploit the integrality of extremal configurations when comparing edge density to triangle density in simple graphs. Specifically, Bollob\'as~\cite{Bol76} showed that the convex hull of all possible pairs $\left(t(K_2, G), t(K_3, G)\right)$ for simple graphs \(G\) is the convex hull of the points \((1, 1)\) and
$
\left(\frac{n-1}{n}, \frac{(n-2)(n-1)}{n^2}\right)
$
for \(n \in \mathbb{N}^+\).
However, this approach does not extend to densities in weighted graphs when negative edge weights are allowed.
In fact, any point in \(\mathbb{R}^2\) can be achieved for $\left(t(K_2, G_{\boldsymbol{w}}), t(K_3, G_{\boldsymbol{w}})\right)$ for some weight function $\boldsymbol{w}$.

Blekherman et al.~\cite{BRW} addressed this issue by considering relationships between cycle densities in weighted graphs. For any cycle \(C_\ell\), the density \(t(C_\ell, G_{\boldsymbol{w}})\) can be expressed as a power sum $\sum \lambda_i^{\ell}$, where the $\lambda_i$'s are the eigenvalues of $G_{\boldsymbol{w}}$. They then applied known properties of power sums for different powers.

In the case of digraphs and tournaments, even less is known about meaningful inequalities involving different digraphs embedded into tournaments. This makes it difficult to directly apply the methods from~\cite{HN}.
Additionally, since the adjacency matrix of a tournament is not necessarily symmetric, cycle densities in tournaments cannot always be written as power sums of real numbers. We resolve this issue using a {\it symmetrization trick}:
we construct specific digraphs \(D_i\)'s such that the densities of these digraphs in each tournament \(T\) can be expressed as power sums of real numbers. The details of this construction are provided in Subsection~\ref{subsec:F-necklace}.

The second challenge stems from the fact that the integrality property above alone does not guarantee undecidability. This is because nonnegativity for univariate polynomials over integers is decidable. Therefore, we must argue that for any \(k\)-tuple of integers \(n_1,\dots,n_k\), these integers can be reflected {\it simultaneously} as densities within a {\it single} host tournament \(T\). Blekherman et al.~\cite{BRW} tackled this by considering disjoint unions of host graphs where each connected component essentially corresponds to one of the integers \(n_i\). However, in tournaments, every pair of vertices must be connected by an arc, complicating matters.

To address this issue in tournaments, we construct specific digraphs and host tournaments using probabilistic and combinatorial techniques to ensure that several necessary conditions are met.
These constructions are somewhat intricate and will be detailed in Subsection~\ref{subsec:F-necklace} and Subsection~\ref{subsec:Tstar}.

\subsection{Organization}

The organization of the remaining paper is as follows.
In Section~\ref{sec:3}, we reduce our main result, Theorem~\ref{ourwork}, to the main Lemma~\ref{copies}.
This lemma, divided into two parts, collectively asserts that the extremal values of the ratios of certain digraph homomorphism densities for tournaments must occur at integral points.
The subsequent sections are dedicated to proving Lemma~\ref{copies}.
To elaborate, in Subsection~\ref{subsec:F-necklace}, we introduce a crucial concept of digraphs known as {\it necklaces},
and in Subsection~\ref{subsec:Tstar}, we construct a special family of host tournaments $T_G^{\bigstar}$;
these two constructions together provide the extremal ratios needed in Lemma~\ref{copies}.
Finally, in Subsection~\ref{subsec: prove Main Lemma}, we conclude the proof of the first part of Lemma~\ref{copies},
while the proof of the second part of Lemma~\ref{copies} can be found in Appendix A.

\section{Proof of Main Theorem, assuming Main Lemma \ref{copies}}\label{sec:3}

Before presenting the proof of Theorem~\ref{ourwork}, we would like to introduce a handy lemma concerning the undecidability of polynomial inequalities.
This lemma, demonstrated in \cite{BRW} through reduction to Matiyasevich's undecidability theorem (i.e., Theorem~\ref{H10}),
essentially asserts, along with other rationales we will explore later, that for our purposes,
it suffices to consider points derived from inverses of integers (referred to as {\it integral points} for simplicity).

\begin{lem}[\cite{BRW}, Lemma~2.21]\label{2.21}
Given a positive integer $s\geq 6$ and a polynomial $p(x_1,...,x_s)$ with integer coefficients, the problem of determining whether there exist $x_1,...,x_s\in \{\frac{1}{n}|n\in \mathbb{N}^+\}$ such that $p(x_1,...,x_s)<0$ is undecidable.
\end{lem}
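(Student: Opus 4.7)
The plan is to reduce Matiyasevich's theorem (Theorem~\ref{H10}) to the stated problem via a sign-preserving substitution that converts integer-valued variables into reciprocals of positive integers. Given an instance of Theorem~\ref{H10}, that is, a polynomial $q(y_1, \ldots, y_k)$ with integer coefficients, the key observation is that every integer $m$ admits a representation $m = a - b$ with $a, b \in \mathbb{N}^+$. Equivalently, the map $(x, x') \mapsto \tfrac{1}{x} - \tfrac{1}{x'}$ sends $\{1/n : n \in \mathbb{N}^+\}^2$ surjectively onto $\mathbb{Z}$.

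Following this, I would introduce auxiliary variables $x_1, x_1', \ldots, x_k, x_k'$, write $d = \deg(q)$, and define
\[
p(x_1, x_1', \ldots, x_k, x_k') \;:=\; (x_1 x_1' \cdots x_k x_k')^{d} \cdot q\!\left(\tfrac{1}{x_1} - \tfrac{1}{x_1'}, \ldots, \tfrac{1}{x_k} - \tfrac{1}{x_k'}\right).
\]
The factor $(x_1 x_1' \cdots x_k x_k')^{d}$ clears all denominators introduced by the substitution, so $p$ is an honest polynomial in $2k$ variables with integer coefficients. Since every $x_i, x_i'$ drawn from $\{1/n : n \in \mathbb{N}^+\}$ is strictly positive, the prefactor is strictly positive, and hence $p < 0$ at a particular point if and only if $q < 0$ at the corresponding $(y_1, \ldots, y_k) \in \mathbb{Z}^k$. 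The constraint $s \geq 6$ is easily met: if $2k \geq 6$, set $s = 2k$; otherwise pad by appending $6 - 2k$ additional variables that do not appear in $p$, which preserves both integrality of coefficients and the sign condition.

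The only subtle point I foresee is that the substitution is many-to-one, so it should not be confused with a bijection. What the argument actually uses is surjectivity of the substitution onto $\mathbb{Z}^k$ together with the sign-preserving positivity of the prefactor; these jointly guarantee that the \emph{existence} of a point where $p < 0$ is equivalent to the existence of a point where $q < 0$. Combined with the undecidability asserted in Theorem~\ref{H10}, this transfers undecidability to the inverse-integer problem and completes the reduction.
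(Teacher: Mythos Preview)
Your reduction is correct and matches what the paper reports: the lemma is not proved in the present paper but is quoted from \cite{BRW}, where it is obtained precisely by reducing Matiyasevich's theorem (Theorem~\ref{H10}) to the inverse-integer problem. Your substitution $y_i \mapsto \tfrac{1}{x_i} - \tfrac{1}{x_i'}$ together with the denominator-clearing factor $(\prod_i x_i x_i')^{\deg q}$ and the padding to reach $s \ge 6$ is exactly the standard argument, and there is nothing to add.
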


Returning to the setting of Theorem~\ref{ourwork}, we are essentially provided with a polynomial having integer coefficients and $s$ variables.
We would like to replace these variables by expressions of certain digraph homomorphism densities such that, in line with the above lemma, these expressions exhibit extremal values at integral points.
To achieve this, for each $i \in [s]$, we will carefully construct (rooted) digraphs $F_i^{\bullet \bullet}$,
which further yield associated (non-rooted) digraphs $D_{4,F_i^{\dagger \bullet \bullet}}, D_{8,F_i^{\dagger \bullet \bullet}}$ and $D_{12,F_i^{\dagger \bullet \bullet}}$, referred to as \textit{necklaces}.
We note that the formal definitions of digraphs $F_i^{\bullet \bullet}$ and their corresponding necklaces will be provided in Subsection \ref{subsec:F-necklace}.
Having these definitions, we are able to specify the aforementioned expressions of digraph densities as follows:
for every tournament $T$ and every $i\in [s]$, define
\[
x_{F_i}(T) := \frac{\hom(D_{8,F_i^{\dagger \bullet \bullet}}, T)}{\hom(D_{4,F_i^{\dagger \bullet \bullet}}, T)^2} = \frac{t(D_{8,F_i^{\dagger \bullet \bullet}}, T)}{t(D_{4,F_i^{\dagger \bullet \bullet}}, T)^2}
\]
and
\[
y_{F_i}(T) := \frac{\hom(D_{12,F_i^{\dagger \bullet \bullet}}, T)}{\hom(D_{4,F_i^{\dagger \bullet \bullet}}, T)^3} = \frac{t(D_{12,F_i^{\dagger \bullet \bullet}}, T)}{t(D_{4,F_i^{\dagger \bullet \bullet}}, T)^3},
\]
where the second equations hold since $|V(D_{4\ell,F_i^{\dagger \bullet \bullet}})|:|V(D_{4,F_i^{\dagger \bullet \bullet}})|=\ell$ for all $\ell\in \{2,3\}$.
We will show that the set of tuples $(x_{F_1}(T), y_{F_1}(T), \dots, x_{F_s}(T), y_{F_s}(T))$, as we vary over all feasible tournaments $T$, exhibits an integrability property.
To succinctly state this property,
we define the closure of all possible tuples $(x_{F_1}(T), y_{F_1}(T), \dots, x_{F_s}(T), y_{F_s}(T))$ as follows:

\begin{dfn}
For any $s \in \mathbb{N}^+$, let
\begin{equation*}
\begin{aligned}
\mathcal{D}_{\leq s} := &cl\left(\left\{(x_{F_1}(T), y_{F_1}(T), x_{F_2}(T), y_{F_2}(T), ..., x_{F_s}(T), y_{F_s}(T)) ~ | ~ T \text{ is a tournament with } \right. \right.\\
&\left.\left. t(D_{4,F_i^{\dagger \bullet \bullet}}, T) \neq 0 \text{ for all } 1 \leq i \leq s\right\}\right),
\end{aligned}
\end{equation*}
where $cl(A)$ denotes the closure of a set $A$.
\end{dfn}

Also, let $\mathcal{R}$ be the closed convex set consisting of points $\left(\frac{1}{r}, \frac{1}{r^2}\right)$ for all $r\in \mathbb{N}^+$. Formally,
\[
\mathcal{R} := \left\{(x, y) \in [0,1]^2 : y \leq x \text{ and } y \geq \frac{(2r+1)x - 1}{r(r+1)} \text{ for } x \in [1/(r+1), 1/r], r \in \mathbb{N}^+\right\}.
\]

Our main lemma that establishes the integrability property for $\mathcal{D}_{\leq s}$ is as follows:

\begin{lem}[Main Lemma]\label{copies}
For any $s \in \mathbb{N}^+$, it holds that
\begin{enumerate}
    \item \label{item:1} For any set $\{r_1, ..., r_s\}$ of positive integers, we have $\left(\frac{1}{r_1}, \frac{1}{r_1^2}, ..., \frac{1}{r_s}, \frac{1}{r_s^2}\right) \in \mathcal{D}_{\leq s}$.
    \item \label{item:2} Furthermore, $\mathcal{D}_{\leq s} \subseteq \mathcal{R}^s$.
\end{enumerate}
\end{lem}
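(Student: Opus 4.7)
The plan is to split the proof into the two parts of the Main Lemma. Part~\ref{item:2} (the containment $\mathcal{D}_{\leq s} \subseteq \mathcal{R}^s$) is structural and should reduce to a power-sum inequality, while Part~\ref{item:1} (realization at all integer points) is constructive and is where the bulk of the combinatorial work has to live. I would handle the easier Part~\ref{item:2} first, because it provides the conceptual picture that then guides the construction needed for Part~\ref{item:1}.

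For Part~\ref{item:2}, because $\mathcal{R}^s$ is defined coordinate-wise, it suffices to show that for each fixed digraph $F = F_i^{\bullet\bullet}$ and each tournament $T$ with $t(D_{4,F^{\dagger\bullet\bullet}},T)\neq 0$ we have $(x_F(T), y_F(T))\in \mathcal{R}$. The whole point of the \emph{symmetrization trick} advertised in Subsection~\ref{subsec:F-necklace} is presumably that the $\dagger$-operation is chosen so that the $V(T)\times V(T)$ matrix $M$ whose $(x,y)$ entry is $\hom_{x,y}(F^{\dagger\bullet\bullet}, T)$ is real symmetric and positive semidefinite — for example, by gluing $F^{\bullet\bullet}$ with a reversed copy of itself at the two roots. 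Granting this, the necklace $D_{4\ell,F^{\dagger\bullet\bullet}}$ is (by design) the digraph whose homomorphism count into $T$ equals $\mathrm{tr}(M^\ell) = \sum_j \lambda_j^\ell$ with $\lambda_j \ge 0$. Writing $p_j := \lambda_j/\sum_k \lambda_k$, one gets $x_F(T) = \sum_j p_j^2$ and $y_F(T) = \sum_j p_j^3$, i.e.\ the second and third moments of a discrete probability distribution on $[0,1]$. The set of such moment pairs is known — and is elementary to verify — to equal $\mathcal{R}$, with the lower envelope traced out by the uniform distributions on $r$ atoms, which contribute exactly the extremal points $(1/r,1/r^2)$ that appear in the definition of $\mathcal{R}$. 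This gives the containment.

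For Part~\ref{item:1}, fix positive integers $r_1,\ldots,r_s$; I would build the host tournament $T = T_G^{\bigstar}$ for a digraph $G$ that is tailored to the tuple $(r_1,\ldots,r_s)$. The design goal is that, for every index $i$, the matrix $M_i$ associated to $F_i^{\dagger\bullet\bullet}$ on $T$ has an asymptotic spectrum consisting of $r_i$ approximately equal eigenvalues and a negligible tail, so that in the limit
\[
\bigl(x_{F_i}(T),\,y_{F_i}(T)\bigr) = \left(\sum_j p_{i,j}^2,\,\sum_j p_{i,j}^3\right) \longrightarrow \left(\frac{1}{r_i},\,\frac{1}{r_i^2}\right).
\]
The $T_G^{\bigstar}$ construction should realize this by partitioning $V(T)$ into blocks reflecting the desired block-eigenstructure for each $F_i$, and orienting the remaining "forced" arcs (present because $T$ is a tournament) either randomly or by a carefully chosen blow-up so that they contribute only lower-order terms to each relevant necklace count. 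Taking the closure in the definition of $\mathcal{D}_{\leq s}$ absorbs the asymptotic error, yielding the desired integral point.

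The hardest step, as the authors explicitly flag in Subsection~\ref{subsec:F-necklace}, is that Part~\ref{item:1} must be satisfied \emph{simultaneously} across all $s$ coordinates by a \emph{single} tournament $T$. In the undirected weighted setting of~\cite{BRW} one could form a disjoint union so that different coordinates were cleanly decoupled, but in a tournament every pair of vertices is joined by an arc, and so each pair of coordinates interacts through these mandatory cross-arcs. The main technical obstacle I expect is therefore a careful joint choice of the digraphs $F_i^{\bullet\bullet}$ and of $T_G^{\bigstar}$ that controls this cross-talk, for instance by making each $F_i^{\bullet\bullet}$ structurally rigid enough that any conditional homomorphism into $T_G^{\bigstar}$ crossing between blocks of different indices is combinatorially dominated by those respecting the intended block, so that the limiting spectrum for $M_i$ depends only on the block associated with $r_i$.
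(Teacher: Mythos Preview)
Your overall plan matches the paper's, but your Part~\ref{item:2} argument contains a genuine error, and your Part~\ref{item:1} sketch, while pointed in the right direction, misses the two concrete ingredients that actually make the construction go through.

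For Part~\ref{item:2}: a length-$\ell$ necklace built from $F^{\dagger\bullet\bullet}$ has density $\mathrm{tr}(M^{\ell})$, so $D_{4\ell}$ contributes $\sum_j\lambda_j^{4\ell}$, not $\sum_j\lambda_j^{\ell}$ as you write. More importantly, the paper's $\dagger$-operation glues two copies of $F^{\bullet\bullet}$ with the two roots \emph{swapped}, yielding $M[x,y]=N[x,y]\,N[y,x]$ with $N[x,y]=\hom_{x,y}(F^{\bullet\bullet},T)$; this makes $M$ symmetric but \emph{not} in general positive semidefinite (already $N=\bigl(\begin{smallmatrix}0&1\\2&0\end{smallmatrix}\bigr)$ gives $M$ with a negative eigenvalue), so your ``$\lambda_j\ge 0$'' is unjustified. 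What rescues the argument is precisely the choice of lengths $4,8,12$: with $\mu_j:=\lambda_j^4\ge 0$ (real because $M$ is symmetric) and $q_j=\mu_j/\sum_k\mu_k$ one gets $x_F=\sum_j q_j^{\,2}$, $y_F=\sum_j q_j^{\,3}$, and only then does your moment description of $\mathcal{R}$ apply. This is exactly the route taken in the paper's Appendix~A.

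For Part~\ref{item:1}: the paper does not merely make the inter-block cross-talk ``combinatorially dominated'' --- it is driven to \emph{zero}. The $F_i^{\bullet\bullet}$ are all built on a single random base tournament $F_0$ (chosen with bounded max in/out-degree, no large one-way complete bipartite sub-digraph, and no large acyclic induced subtournament) by attaching two roots whose out-degrees $k_i$ are pairwise well separated; these rigidity properties force every homomorphism $F_i^{\bullet\bullet}\to T_G^{\bigstar}$ to be an injection landing entirely inside a single planted copy of $F_i^{\bullet\bullet}$ in an $i$-indexed block, with roots mapped to roots (Claims~\ref{homoT}--\ref{neq}, Lemma~\ref{oneT}, Claim~\ref{key claim of 4.1}). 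The upshot is Lemma~\ref{graphon}: $M_i(T_G^{\bigstar})$ is \emph{exactly} a positive scalar times the adjacency matrix of $r_i$ disjoint copies of the fixed undirected graph $G$, not an approximation. The ``$r_i$ approximately equal eigenvalues with negligible tail'' that you want then comes not from how the tournament is oriented but from the choice of $G$: the paper takes $G$ to be one of Alon's triangle-free $(n,d,\lambda)$-graphs, whose adjacency spectrum is dominated by its top eigenvalue, so that $\sum_j\lambda_j^{4\ell}=(1+o(1))\lambda_1^{4\ell}$ and hence $(x_{F_i},y_{F_i})\to(1/r_i,1/r_i^2)$ along the sequence $T_{G_k}^{\bigstar}$.
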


The proof of item \ref{item:2} in Lemma~\ref{copies} follows a similar approach to Lemmas 2.7 and 2.9 in \cite{BRW}.
For completeness, we include this proof in Appendix A.

The key technical contribution of this paper lies in proving item \ref{item:1} in Lemma~\ref{copies}.
This proof entails intricate constructions of the digraphs we call $F_i^{\dagger \bullet \bullet}$-necklaces and the host tournaments $T_G^\bigstar$.
We defer this proof to Subsections~\ref{subsec:F-necklace} and~\ref{subsec:Tstar}.
In the rest of this section, we will utilize Lemma~\ref{copies} to provide the proof of Theorem~\ref{ourwork}.

\subsection{Proof of Theorem~\ref{ourwork}, assuming Main Lemma~\ref{copies}.}\label{subsec:prove main result}
In this subsection, we establish Theorem~\ref{ourwork} by proving the following statement:
for every positive integer $s\geq 6$ and every polynomial $p(x_1,...,x_s)$ with integer coefficients,
there exists a corresponding quantum digraph $f(p)$ such that
the problem of determining whether $t(f(p),T)<0$ for some tournament $T$ is undecidable.
In other words, we will reduce the problem of determining the validity of digraph density inequalities to Lemma~\ref{2.21}, i.e., the problem of determining whether $p(x_1,...,x_s)<0$ for some $x_1,...,x_s\in \{\frac{1}{n}|n\in \mathbb{N}^+\}$, with the aid of Lemma~\ref{copies}. This reduction follows the same argument as in \cite{HN} and \cite{BRW}. For the sake of completeness, we include the full argument here.

The following two lemmas are essentially from \cite{HN} and \cite{BRW}, which not only offer an exact expression of the desired quantum digraph $f(p)$ but also provide a connection between Lemma~\ref{2.21} and the tuples of $\mathcal{D}_{\leq s}$ examined in Lemma~\ref{copies}.

\begin{lem}[\cite{BRW}, Lemma~2.22]\label{2.22}
Let $p$ be a polynomial in variables $x_1,...,x_s$. Let $M$ be the sum of the absolute values of the coefficients of $p$ multiplied by $100\cdot \deg(p)$, where $\deg(p)$ is the degree of the polynomial $p$. Define $\overline{p}\in \mathbb{R}[x_1,...,x_s,y_1,...,y_s]$ as $$\overline{p}(x_1,...,x_s,y_1,...,y_s):=p(x_1,...,x_s)\cdot \prod_{i=1}^sx_{i}^6+M\cdot \left(\sum_{i=1}^sy_i-x_i^2\right).$$
Then the following are equivalent:
\begin{itemize}
\item[I.] there exist some $x_1,...,x_s\in \{\frac{1}{n}|n\in \mathbb{N}^+\}$ such that $p(x_1,...,x_s)<0$;
\item[II.] there exist some $x_1,...,x_s,y_1,...,y_s$ with $(x_i,y_i)\in \mathcal{R}$ for every $1\leq i \leq s$ such that $\overline{p}(x_1,...,x_s,y_1,...,y_s)<0$.
\end{itemize}
\end{lem}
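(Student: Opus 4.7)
The plan is to establish the equivalence in two steps: (I) $\Rightarrow$ (II) is a direct substitution, while (II) $\Rightarrow$ (I) carries the substance.

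For (I) $\Rightarrow$ (II), given $x_i = 1/n_i$ with $p(x_1,\ldots,x_s) < 0$, I would set $y_i := x_i^2 = 1/n_i^2$. Each $(x_i, y_i) = (1/n_i, 1/n_i^2)$ is a kink on the lower boundary of $\mathcal{R}$, hence lies in $\mathcal{R}$, and the penalty term $M\sum(y_i - x_i^2)$ vanishes. Since $\prod x_i^6 > 0$, the conclusion $\overline{p}(x,y) = p(x)\prod x_i^6 < 0$ is immediate.

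For (II) $\Rightarrow$ (I), the key geometric input is that $y \ge x^2$ on all of $\mathcal{R}$, with equality if and only if $x = 1/r$ for some $r \in \mathbb{N}^+$. Indeed, on the piece $x \in [1/(r+1), 1/r]$ the lower boundary of $\mathcal{R}$ is the secant through $(1/(r+1), 1/(r+1)^2)$ and $(1/r, 1/r^2)$; a direct expansion yields the factorization $y - x^2 = (1/r - x)(x - 1/(r+1))$ for $y$ on that secant, which is nonnegative on the piece and vanishes only at its two endpoints. Given $(x_i, y_i) \in \mathcal{R}$ with $\overline{p}(x, y) < 0$, I first exploit $\partial \overline{p}/\partial y_i = M > 0$ to push each $y_i$ down to the lower boundary of $\mathcal{R}$—this only decreases $\overline{p}$—which reduces the task to producing a negative value for the piecewise-polynomial function
\[
g(x) := p(x) \prod_{i=1}^{s} x_i^6 + M \sum_{i=1}^{s} (1/r_i - x_i)(x_i - 1/(r_i+1))
\]
on $[0,1]^s$, where $r_i$ is the integer with $x_i \in [1/(r_i+1), 1/r_i]$, at some vertex $x^* = (1/n_1, \ldots, 1/n_s)$ of a piece. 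At such a vertex the penalty vanishes, so $g(x^*) = p(x^*)\prod (x^*_i)^6$ and $g(x^*) < 0$ forces $p(x^*) < 0$, as desired. Moreover, combining $g(x) < 0$ with the uniform estimate $|p(x)\prod x_i^6| \le \sum|c_\alpha| = M/(100\deg p)$ on $[0,1]^s$ yields the crucial penalty budget $\sum_{i=1}^s (1/r_i - x_i)(x_i - 1/(r_i+1)) < 1/(100\deg p)$.

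The main obstacle I anticipate is ruling out interior-of-piece minimizers of $g$. My plan is a first-order optimality argument: on each piece $\prod_i[1/(r_i+1), 1/r_i]$, the penalty's partial derivative in direction $x_i$ is the explicit affine function $M(1/r_i + 1/(r_i+1) - 2x_i)$, whose range is contained in $[-M/(r_i(r_i+1)), M/(r_i(r_i+1))]$ and must absorb the negative of the polynomial gradient for an interior critical point to exist. Bounding $|\partial_{x_i}(p(x)\prod x_j^6)| \le (\deg p + 6)\sum|c|$ on $[0,1]^s$ and using $M = 100\deg p \cdot \sum|c|$, this cancellation is impossible once $r_i(r_i+1) > 100\deg p/(\deg p + 6)$, confining candidate interior critical points to finitely many small patterns of $(r_1,\ldots,r_s)$. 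For those remaining patterns, the small penalty budget $1/(100 \deg p)$ allows a direct comparison between $g$ at any interior critical point and at the surrounding $2^s$ vertices, forcing at least one vertex to also satisfy $g < 0$. This reduces the global minimum of $g$ on $[0,1]^s$ to a vertex, and so provides the required integer reciprocal point where $p$ is negative.
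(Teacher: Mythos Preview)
The paper does not prove this lemma itself; it simply quotes it from \cite{BRW}. So there is no in-paper argument to compare against, and the task is just to check whether your proposal stands on its own. The direction (I)$\Rightarrow$(II) and your setup for (II)$\Rightarrow$(I) --- pushing each $y_i$ down to the lower boundary of $\mathcal{R}$, writing the penalty as $M\sum_i(1/r_i-x_i)(x_i-1/(r_i{+}1))$, and extracting the budget $\sum_i(1/r_i-x_i)(x_i-1/(r_i{+}1))<1/(100\deg p)$ --- are all correct.

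The gap is in your first-order optimality step. You want to show that interior critical points of $g$ on a piece $\prod_i[1/(r_i{+}1),1/r_i]$ are confined to small $r_i$, arguing that the penalty partial in $x_i$ ranges over $\bigl(-M/(r_i(r_i{+}1)),\,M/(r_i(r_i{+}1))\bigr)$ and ``must absorb'' the polynomial partial. But all you have on the polynomial side is an \emph{upper} bound $|\partial_{x_i}(p\prod_j x_j^6)|\le(\deg p+6)\|p\|_1$; this gives no lower bound, so for large $r_i$ the polynomial partial can perfectly well land inside the narrow penalty range (e.g.\ whenever $p$ is independent of $x_i$, the partial is $6x_i^5 p\prod_{j\ne i}x_j^6$, which is $O(1/r_i^5)$). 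Interior critical points are therefore \emph{not} ruled out for large $r_i$, and the subsequent ``direct comparison'' --- which you do not actually carry out --- cannot be pushed off to finitely many patterns. Even on a fixed pattern it is unclear how the penalty budget alone controls $|\phi(x)-\phi(x^\ast)|$ for a vertex $x^\ast$.

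The clean repair is second-order rather than first-order: on each piece, $g$ is concave \emph{in each coordinate separately}, since
\[
\partial_{x_i}^2 g \;=\; \partial_{x_i}^2\!\Bigl(p(x)\prod_j x_j^6\Bigr) - 2M
\;\le\; (\deg p+5)(\deg p+6)\,\|p\|_1 \;-\; 200\,\deg p\,\|p\|_1,
\]
which is negative whenever $(\deg p+5)(\deg p+6)<200\deg p$. Coordinate-wise concavity lets you move the $x_i$ one at a time to an endpoint of their interval without increasing $g$; you terminate at a vertex $x^\ast$ where the penalty vanishes and $p(x^\ast)\prod_i(x_i^\ast)^6=g(x^\ast)\le g(x)<0$, hence $p(x^\ast)<0$. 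For the reduction in this paper one may always assume $\deg p$ bounded (Hilbert's tenth problem is already undecidable at degree four), so the degree condition above is no obstacle.
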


\begin{lem}[An analogue of Lemma~2.23 in \cite{BRW}]\label{2.23}
Given a polynomial $p$ in $s$ variables, there is a quantum digraph $f(p)$ such that
for any tournament $T$, we have
\begin{equation}\label{f(p)}
t(f(p),T):=\overline{p}\Big(x_{F_1}(T),...,x_{F_s}(T),y_{F_1}(T),...,y_{F_s}(T)\Big)\cdot \prod_{i=1}^st(D_{4,F_i^{\dagger \bullet \bullet}},T)^{3 \deg(p)}.
\end{equation}
\end{lem}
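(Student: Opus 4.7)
The plan is to construct $f(p)$ explicitly by expanding the right-hand side of~\eqref{f(p)} into a finite real linear combination of digraph homomorphism densities $t(D,T)$; reading off the digraphs $D$ and coefficients then defines the quantum digraph $f(p)$.

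Substituting the formula $\overline{p}(x,y)=p(x_1,\dots,x_s)\prod_{i=1}^s x_i^6+M\sum_{i=1}^s(y_i-x_i^2)$ from Lemma~\ref{2.22}, together with the identities $x_{F_i}(T)=t(D_{8,F_i^{\dagger\bullet\bullet}},T)/t(D_{4,F_i^{\dagger\bullet\bullet}},T)^2$ and $y_{F_i}(T)=t(D_{12,F_i^{\dagger\bullet\bullet}},T)/t(D_{4,F_i^{\dagger\bullet\bullet}},T)^3$, turns each monomial of the right-hand side into a rational function in $t(D_{4,F_i^{\dagger\bullet\bullet}},T)$, $t(D_{8,F_i^{\dagger\bullet\bullet}},T)$, $t(D_{12,F_i^{\dagger\bullet\bullet}},T)$ whose denominators are positive powers of the $t(D_{4,F_i^{\dagger\bullet\bullet}},T)$'s only. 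The prescribed clearing factor $\prod_{j=1}^s t(D_{4,F_j^{\dagger\bullet\bullet}},T)^{3\deg p}$ is then seen to absorb every denominator: for a monomial $c\prod_i x_i^{a_i+6}$ coming from $p(x)\prod x_i^6$ (with $a_i\le\deg p$), the surviving exponent of $t(D_{4,F_i^{\dagger\bullet\bullet}},T)$ is $3\deg p-2(a_i+6)$, which is nonnegative whenever $\deg p\ge 12$; the $My_i$ and $-Mx_i^2$ contributions analogously give nonnegative exponents $3\deg p-3$ and $3\deg p-4$. The hypothesis $\deg p\ge 12$ is without loss of generality, since otherwise we may replace $p$ by $p(x)\cdot x_1^{12}$, a polynomial with integer coefficients, larger degree, and the same sign on the positive orthant, so that in particular it takes a negative value on $\{1/n:n\in\mathbb{N}^+\}^s$ if and only if $p$ does.

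Once every monomial is a real scalar times a product of nonnegative integer powers of $t(D_{4,F_i^{\dagger\bullet\bullet}},T)$, $t(D_{8,F_i^{\dagger\bullet\bullet}},T)$, $t(D_{12,F_i^{\dagger\bullet\bullet}},T)$, I would apply iteratively the multiplicativity $t(D_1,T)\cdot t(D_2,T)=t(D_1 D_2,T)$ under disjoint union (recalled in the introduction) to rewrite each monomial as $c\cdot t(D^\alpha,T)$ for an explicit digraph $D^\alpha$ assembled as a suitable disjoint union of copies of the $D_{4,F_i^{\dagger\bullet\bullet}}$, $D_{8,F_i^{\dagger\bullet\bullet}}$, $D_{12,F_i^{\dagger\bullet\bullet}}$. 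Collecting these summands gives the quantum digraph $f(p):=\sum_\alpha c_\alpha D^\alpha$, and equation~\eqref{f(p)} holds by construction.

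The only obstacle, which is modest, is the degree bookkeeping above: one must verify carefully that $3\deg p$ (together with the exponent $6$ built into $\overline{p}$) dominates every exponent of $t(D_{4,F_i^{\dagger\bullet\bullet}},T)$ appearing in the denominator after substitution. Beyond this routine check, the proof is entirely formal and parallels the graph-theoretic argument for Lemma~2.23 of~\cite{BRW}.
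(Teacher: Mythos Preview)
Your approach matches the paper's: expand the right-hand side, observe it is a polynomial in the densities $t(D_{4,F_i^{\dagger\bullet\bullet}},T)$, $t(D_{8,F_i^{\dagger\bullet\bullet}},T)$, $t(D_{12,F_i^{\dagger\bullet\bullet}},T)$, and then invoke multiplicativity of $t(\cdot,T)$ under disjoint union to realize it as $t(f(p),T)$ for a quantum digraph $f(p)$. The paper's proof simply asserts this polynomiality without checking exponents; you are more careful and correctly notice that the clearing factor $\prod_i t(D_{4,F_i^{\dagger\bullet\bullet}},T)^{3\deg p}$ only suffices once $\deg p\ge 12$ (the worst case being a monomial $x_i^{\deg p}$ in $p$, yielding residual exponent $3\deg p-2(\deg p+6)=\deg p-12$). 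Your workaround of replacing $p$ by $p\cdot x_1^{12}$ is valid for the downstream undecidability reduction, though strictly speaking it proves the lemma for a modified polynomial rather than the given one; an alternative that preserves the statement verbatim is simply to enlarge the clearing exponent (e.g.\ to $3\deg p+12$), which changes nothing in how the lemma is subsequently used.
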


\begin{proof}
Recall the definitions of $x_{F_i}(T)$ and $y_{F_i}(T)$ for each $i\in [s]$.
We see that the right-hand side of \eqref{f(p)} is a polynomial in variables $t(D_{4,F_i^{\dagger \bullet \bullet}},T),t(D_{8,F_i^{\dagger \bullet \bullet}},T)$ and $t(D_{12,F_i^{\dagger \bullet \bullet}},T)$ for $i\in [s]$.\footnote{Note that this applies to all tournaments $T$, regardless of whether $t(D_{4,F_i^{\dagger \bullet \bullet}},T) \neq 0$ or not.}
Since any polynomial function of digraph homomorphism densities for tournaments can be equivalently represented as a linear function,
we derive that there exists a quantum digraph $f(p)$ for which \eqref{f(p)} holds.
\end{proof}

By leveraging both items in Lemma~\ref{copies}, we show in the next lemma that the two problems highlighted at the beginning of this subsection are in fact equivalent.

\begin{lem}\label{2.24}
Given a polynomial $p$ in $s$ variables, let $f(p)$ be the quantum digraph obtained from Lemma~\ref{2.23}. Then the following two statements are equivalent:
\begin{itemize}
\item[I.] there exist some $x_1,...,x_s\in \{\frac{1}{n}|n\in \mathbb{N}^+\}$ such that $p(x_1,...,x_s)<0$;
\item[II.] there exists some tournament $T$ such that $t(f(p),T)<0$.
\end{itemize}
\end{lem}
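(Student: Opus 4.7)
The plan is to establish the equivalence by proving the two implications separately, each drawing on one item of Main Lemma~\ref{copies} together with Lemma~\ref{2.22} and the identity~\eqref{f(p)} supplied by Lemma~\ref{2.23}. The overall strategy is to view the tuple $\bigl(x_{F_1}(T),y_{F_1}(T),\dots,x_{F_s}(T),y_{F_s}(T)\bigr)$ as a point at which $\overline p$ is evaluated, and to transfer strict negativity back and forth between the polynomial side ($p$ and $\overline p$) and the density side ($t(f(p),T)$).

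For the direction I$\Rightarrow$II, I would fix positive integers $r_1,\dots,r_s$ with $p(1/r_1,\dots,1/r_s)<0$. At the integral point $(1/r_i,1/r_i^2)_i$ the penalty term $M\sum_i(y_i-x_i^2)$ inside $\overline p$ vanishes, so $\overline p(1/r_1,\dots,1/r_s,1/r_1^2,\dots,1/r_s^2)=p(1/r_1,\dots,1/r_s)\cdot\prod_i r_i^{-6}<0$. By item~\ref{item:1} of Lemma~\ref{copies} this integral point belongs to $\mathcal{D}_{\leq s}$, so by definition of the closure there is a sequence of tournaments $T_n$ with $t(D_{4,F_i^{\dagger\bullet\bullet}},T_n)\neq 0$ for every $i$ whose density tuples converge to this point. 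Continuity of $\overline p$ then yields $\overline p\bigl(x_{F_1}(T_n),\dots,y_{F_s}(T_n)\bigr)<0$ for all sufficiently large $n$. Multiplying by the factor $\prod_i t(D_{4,F_i^{\dagger\bullet\bullet}},T_n)^{3\deg(p)}$, which is strictly positive by the necklace construction, and invoking~\eqref{f(p)}, one gets $t(f(p),T_n)<0$, establishing~II.

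For the direction II$\Rightarrow$I, I would start from a tournament $T$ with $t(f(p),T)<0$. Reading \eqref{f(p)} in reverse, if any $t(D_{4,F_i^{\dagger\bullet\bullet}},T)$ vanished, the right-hand side would be $0$, contradicting strict negativity; hence every $t(D_{4,F_i^{\dagger\bullet\bullet}},T)$ is nonzero, making $(x_{F_i}(T),y_{F_i}(T))_i$ an honest element of $\mathcal{D}_{\leq s}$, and then of $\mathcal{R}^s$ by item~\ref{item:2} of Lemma~\ref{copies}. The product factor $\prod_i t(D_{4,F_i^{\dagger\bullet\bullet}},T)^{3\deg(p)}$ is positive, so the sign of $t(f(p),T)$ transfers to $\overline p$ at this point. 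This places us exactly in statement~II of Lemma~\ref{2.22}, from which that lemma concludes~I.

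The main obstacle is controlling the sign of the multiplicative factor $\prod_i t(D_{4,F_i^{\dagger\bullet\bullet}},T)^{3\deg(p)}$, since the exponent $3\deg(p)$ need not be even; this relies essentially on the symmetrization trick underlying the necklace construction in Subsection~\ref{subsec:F-necklace}, which must guarantee $t(D_{4,F_i^{\dagger\bullet\bullet}},T)\ge 0$ for every tournament $T$, and strictly positive whenever the tuple of ratios is defined. A subsidiary technical point is the closure step in direction~I$\Rightarrow$II: one must verify that the approximating tournaments produced by item~\ref{item:1} of Lemma~\ref{copies} indeed keep all $t(D_{4,F_i^{\dagger\bullet\bullet}},T_n)$ nonzero, so that the continuity argument for $\overline p$ actually applies.
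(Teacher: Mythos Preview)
Your proof is correct and follows essentially the same approach as the paper (which phrases the II$\Rightarrow$I direction via the contrapositive, but the underlying logic is identical). Your concern about the sign of $\prod_i t(D_{4,F_i^{\dagger\bullet\bullet}},T)^{3\deg(p)}$ is more easily resolved than you suggest and does not depend on the symmetrization trick: each $t(D_{4,F_i^{\dagger\bullet\bullet}},T)$ is a digraph homomorphism density into a tournament and hence nonnegative by definition, so whenever it is nonzero the full product is strictly positive.
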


\begin{proof}
Suppose that $p(x_1,...,x_s)\geq 0$ for all $x_1,...,x_s\in \{\frac{1}{n}|n\in \mathbb{N}^+\}$.
Then by Lemma~\ref{2.22}, $\overline{p}(x_1,...,x_s,y_1,...,y_s)\geq 0$ for every $x_1,...,x_s,y_1,...,y_s$ such that $(x_i,y_i)\in \mathcal{R}$ for all $i\in [s]$.
By item 2 in Lemma~\ref{copies}, we see that for any tournament $T$ with $t(D_{4,F_i^{\dagger \bullet \bullet}},T)\neq 0$ for all $i\in [s]$, we have $(x_{F_i}(T), y_{F_i}(T))\in  \mathcal{R}$,
and hence by \eqref{f(p)} of Lemma~\ref{2.23}, it holds that $t(f(p),T)\geq 0$ for every such tournament $T$.
For those tournaments $T$ with $t(D_{4,F_i^{\dagger \bullet \bullet}},T)= 0$ for some $i\in [s]$,
it is easy to see from \eqref{f(p)} that $t(f(p),T)$ is divisible by $t(D_{4,F_i^{\dagger \bullet \bullet}},T)$ and thus equals to $0$.
Therefore, in this case we have $t(f(p),T)\geq 0$ for all tournaments $T$.

On the other hand, we assume that $p(x_1,...,x_s)<0$ for $x_i=\frac{1}{m_i}$ for some $m_i\in \mathbb{N}$ and $i\in [s]$.
By the definition of $\overline{p}$, it also holds that $\overline{p}(\frac{1}{m_1},...,\frac{1}{m_s},\frac{1}{m_1^2},...,\frac{1}{m_s^2})< 0$.
Note that by item 1 in Lemma~\ref{copies}, there is a sequence of tournaments $T_n$'s such that
$t(D_{4,F_i^{\dagger \bullet \bullet}},T_n)\neq 0$, $x_{F_i}(T_n)\rightarrow \frac{1}{m_i}$ and $y_{F_i}(T_n)\rightarrow \frac{1}{m_i^2}$ as $n\to \infty$ for each $i\in [s]$.
By \eqref{f(p)} and using the continuity of the polynomial $\overline{p}$,
there exists a tournament $T$ in the above sequence such that $\overline{p}(x_{F_1}(T),...,x_{F_s}(T),y_{F_1}(T),...,y_{F_s}(T))<0$ and $\prod_{i=1}^st(D_{4,F_i^{\dagger \bullet \bullet}},T)^{3 \deg(p)}\neq0$, which implies that $t(f(p),T)<0$.
\end{proof}

Now we can promptly derive Theorem \ref{ourwork}.

\begin{proof}[Proof of Theorem~\ref{ourwork}.]
Consider any polynomial $p$ in $s\geq 6$ variables with integer coefficients.
Let $f(p)=\sum_{i=1}^k c_i D_i$ be the quantum digraph obtained by Lemma~\ref{2.23}.
By Lemma~\ref{2.21} and Lemma~\ref{2.24},
it is straightforward to see that the problem of determining whether there exists some tournament $T$ with $t(f(p),T)<0$ is undecidable.
Now we have completed the proof of Theorem~\ref{ourwork}.
\end{proof}

For the rest of this paper, we will focus on proving the Main  Lemma~\ref{copies}.
To prove Lemma~\ref{copies}, for any given positive integers $r_1,...,r_s$,
we need to construct a sequence of necklaces and host tournaments $T_n$ such that $(x_{F_i}(T_n),y_{F_i}(T_n))\rightarrow (\frac{1}{r_i},\frac{1}{r_i^2})$ as $n\rightarrow \infty$ holds for all $i\in [s]$ simultaneously.
This will be accomplished in the next section.

\section{Proof of Main Lemma~\ref{copies}.}\label{sec:4}

The goal of this section is to prove the Main Lemma~\ref{copies}. To achieve this, we will carefully construct a class of special rooted digraphs $F_i^{\bullet \bullet}$ and their corresponding \textit{$F_i^{\bullet \bullet}$-necklaces}. The $F_i^{\bullet \bullet}$-necklace can be viewed as a digraph analogue of the graph necklaces defined in~\cite{BRW} defined below.
\begin{dfn}\label{necklace}
Given a digraph $F^{\bullet \bullet}$ with two identified roots $z$ and $w$, and an integer $\ell \geq 3$, we define the \textit{necklace} $D_{\ell,F^{\bullet \bullet }}$ as the digraph constructed from $\ell$ copies of $F^{\bullet \bullet}$ as follows.
Start with $\ell$ ordered vertices $x_1, x_2, \dots, x_\ell$. For each $i \in [\ell]$, glue the $i$-th copy of $F^{\bullet \bullet}$ by identifying the root $z$ with vertex $x_i$ and the root $w$ with vertex $x_{i+1}$, where the indices are taken modulo $\ell$ to form a cycle.
We call $D_{\ell,F^{\bullet \bullet}}$ the $F^{\bullet \bullet}$-necklace of length $\ell$.
\end{dfn}

Note that we have $|V(D_{\ell,F^{\bullet \bullet}})|=\ell\cdot \left(|V(F^{\bullet \bullet})|-1\right)$ for each $\ell\geq 3$.

\begin{figure}[htbp]
\centering
\includegraphics[width=1\textwidth]{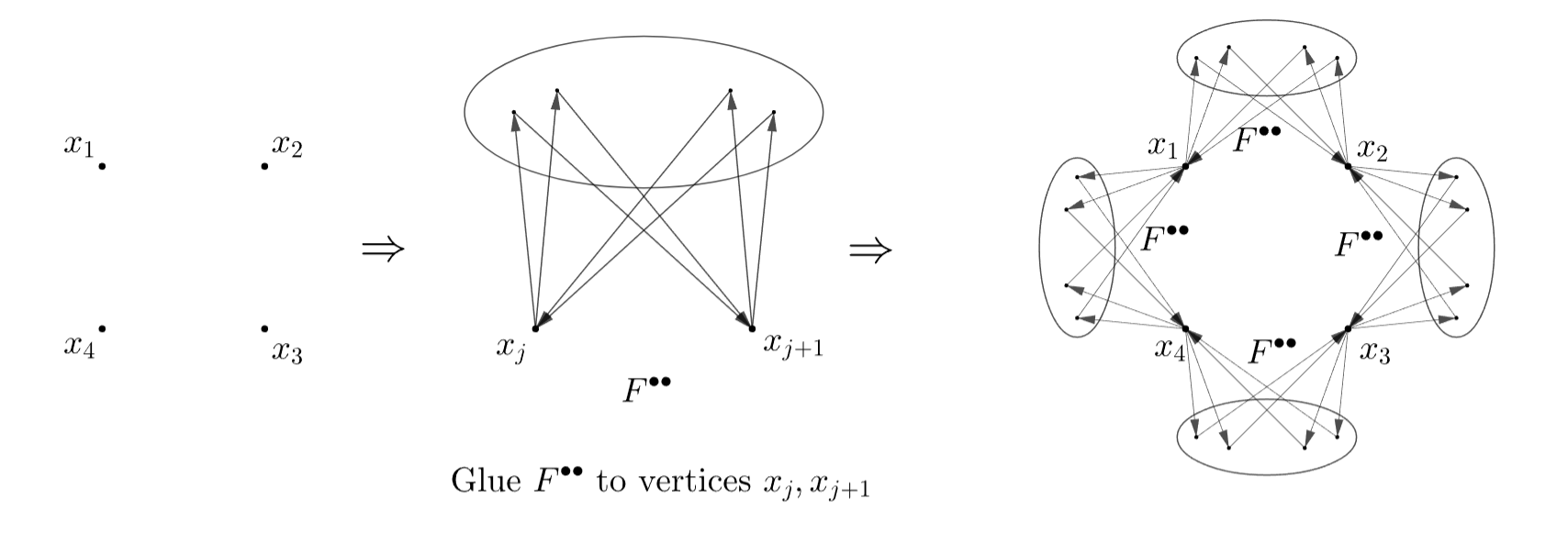}
\caption{The construction of the necklace $D_{4,F^{\bullet \bullet}}$}
\label{necklaceexample}
\end{figure}

In the un-directed graph setting, one advantage of an $F$-necklace is that its density in a  graph can always be computed as the density of cycles in some associated weighted graph. This density can then be expressed as a power sum of real numbers, which are the eigenvalues of the associated graph. In \cite{BRW}, cliques were used to construct clique-necklaces. However, constructing appropriate necklaces in the digraph case presents several challenges.

One challenge is that tournaments are not symmetric, and thus when we consider the density of a directed cycle in some associated directed weighted graph, it is not always possible to express the quantity as a power sum of real numbers. Another challenge arises from the fact that, unlike graphs where non-edges are allowed, tournaments are complete graphs. This means that we must carefully orient the edges between \textit{any} pair of vertices to ensure that when we compute the digraph necklace density in the tournament, the associated graph behaves well. Such interactions do not occur in graphs that allow non-edges.

In Subsection \ref{subsec:F-necklace}, we will construct the necklaces by carefully choosing the digraphs $F_i$.
In Subsection \ref{subsec:Tstar}, we will construct the host tournament.
Finally, in Subsection \ref{subsec: prove Main Lemma}, we will complete the proof of Lemma~\ref{copies}.

 \subsection{Constructing $F_i^{\dagger \bullet \bullet}$-necklaces}\label{subsec:F-necklace}

The construction process consists of three steps (let $s\in \mathbb{N}^+$ be fixed):
 First, we construct a base tournament $F_0$ that satisfies a set of carefully selected conditions.
    Next, for each $i \in [s]$,  we use $F_0$ to construct a corresponding rooted digraph $F_i^{\bullet \bullet}$ and then transform it into a symmetric rooted digraph $F_i^{\dagger \bullet \bullet}$.
   Finally, we construct the \textit{$F_i^{\dagger \bullet \bullet}$-necklace} using Definition~\ref{necklace}.

The first step will be completed in Subsection \ref{subsec:F_0}, while the second and third steps will be carried out in Subsection \ref{subsec:F_i}. Moreover, in Subsection \ref{subsec:F_i}, we will proceed to prove some essential properties of $F_i^{\dagger \bullet \bullet}$ that help us to construct the desirable host tournament $T_G^{\bigstar}$ and compute the density of the appropriate necklaces in the host tournaments.

\subsubsection{Constructing the base tournament $F_0$}\label{subsec:F_0}

We first construct a tournament $F_0$ satisfying the following conditions in Lemma~\ref{constructionF}. These  properties will later help us compute the density of necklaces within suitably designed ``transitive-like" host tournaments.

\begin{lem}\label{constructionF}
Let $n$ be a sufficiently large integer satisfying $$2ne^{-\frac{n}{80}}+n^{\sqrt{n}}2^{-(n-1)}+n^{\frac{n}{7}+1}2^{-\frac{n^2}{20}}<1.$$
Then there exists an $n$-vertex tournament $F_0$ with the following three conditions:
\begin{itemize}
 \item[{\bf (I)}] $\max\{\Delta^+(F_0),\Delta^-(F_0)\}\leq \frac{2n}{3}$;
 \item[{\bf (II)}] $F_0$ does not contain any copy of the following complete bipartite digraph with vertex set $A_1\cup A_2$ and arc set $\{(a_1,a_2):a_1\in A_1, a_2\in A_2\}$, where $|A_1|=|A_2|= \sqrt{n}$; and
 \item[{\bf (III)}] For any subset $S$ of $V(F_0)$ with $|S|\geq \frac{2n}{13}-\sqrt{n}$, the induced sub-digraph $F_0[S]$ contains a directed cycle.
\end{itemize}
\end{lem}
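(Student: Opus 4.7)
The plan is a standard probabilistic construction. Take $F_0$ to be a uniformly random tournament on $n$ vertices, obtained by orienting each edge of $K_n$ independently with probability $1/2$ in each direction. I will define three bad events $E_{\mathrm{I}}$, $E_{\mathrm{II}}$, $E_{\mathrm{III}}$ corresponding to failures of conditions (I), (II), (III), estimate each one separately, and show via the union bound that the numerical hypothesis forces $\Pr(E_{\mathrm{I}}\cup E_{\mathrm{II}}\cup E_{\mathrm{III}})<1$; any tournament in the positive-probability event then satisfies all three conditions. The three terms in the hypothesis are transparently tailored to these three probability bounds, so the structure of the argument is essentially dictated.

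For (I), the out-degree $d^+(v)$ of any fixed vertex $v$ is $\mathrm{Bin}(n-1,1/2)$ with mean $(n-1)/2$, and the gap to $2n/3$ is about $n/6$. A Chernoff-type tail bound then gives $\Pr(d^+(v)>2n/3)\le e^{-n/80}$ for $n$ sufficiently large, and symmetrically for $d^-(v)$; a union bound over $n$ vertices and both degree types yields $\Pr(E_{\mathrm{I}})\le 2ne^{-n/80}$. For (II), the number of ordered pairs of disjoint $\sqrt n$-subsets $(A_1,A_2)$ is at most $\binom{n}{\sqrt n}^2$, and for each fixed such pair the probability that every one of the $|A_1||A_2|=n$ arcs between them is oriented from $A_1$ to $A_2$ is exactly $2^{-n}$; combining these (and absorbing an $e^{O(\sqrt n)}$ factor from the binomial coefficient) yields the second term $n^{\sqrt n}2^{-(n-1)}$.

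For (III), the key observation is that $F_0[S]$ contains no directed cycle iff $F_0[S]$ is transitive, which holds with probability exactly $|S|!/2^{\binom{|S|}{2}}$ since each of the $|S|!$ linear orders is equally likely to be the unique topological order. Moreover, transitivity passes to induced sub-tournaments, so if some $S$ with $|S|\ge s_0:=\lceil 2n/13-\sqrt n\rceil$ is transitive then so is some subset of size exactly $s_0$; it therefore suffices to union-bound only over subsets of size $s_0$. This gives $\Pr(E_{\mathrm{III}})\le \binom{n}{s_0}s_0!/2^{\binom{s_0}{2}}\le n^{s_0}/2^{\binom{s_0}{2}}$, which a routine estimate bounds by the third term $n^{n/7+1}2^{-n^2/20}$ for $n$ large.

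The only mildly technical steps are the Chernoff computation in (I), where one must verify that the specific deviation $2n/3-(n-1)/2$ produces the claimed constant $1/80$ in the exponent, and the monotonicity reduction in (III), which is the only reason the union bound there remains tractable. Everything else is pure counting. The real difficulty of the lemma is not in its proof but in the engineering of the three thresholds $2n/3$, $\sqrt n$, and $2n/13-\sqrt n$: they must be chosen so that the three corresponding probabilities are simultaneously subconstant \emph{and} so that $F_0$ is strong enough to drive the subsequent constructions of the $F_i^{\dagger\bullet\bullet}$-necklaces and the host tournaments $T_G^{\bigstar}$ in the following subsections.
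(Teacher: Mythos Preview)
Your proposal is correct and follows essentially the same approach as the paper: a uniformly random tournament, Chernoff for (I), direct counting for (II), the transitivity characterization plus counting for (III), and a union bound matching the three terms in the hypothesis. The only minor difference is that for (III) you reduce via monotonicity to subsets of size exactly $s_0$, whereas the paper sums over all sizes $\ge s_0$; your reduction is slightly cleaner but the two arguments are otherwise identical.
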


To prove the existence of such an $F_0$, we will use a random construction.
The following classical Chernoff-type estimates (see, e.g., \cite{AS}) will be used.

\begin{lem}\label{Chernoff}
Let $X =\sum_{i}^\ell X_i$ be the sum of independent zero-one random variables with average $\mu=E[X]$. Then for all non-negative $\lambda\leq \mu$, we have $\mathbb{P}[|X-\mu|>\lambda] \leq 2e^{-\frac{\lambda^2}{4\mu}}$.
\end{lem}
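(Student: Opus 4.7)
The plan is to apply the standard exponential moment (Bernstein/Chernoff) method: bound each tail $\mathbb{P}[X \geq a]$ by $e^{-ta}\mathbb{E}[e^{tX}]$ for a well-chosen $t > 0$, factor the moment generating function using independence, and then optimize in $t$. Since the statement gives a two-sided bound, I will handle the upper and lower deviations separately and combine them by a union bound at the end.

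For the upper tail, write $p_i := \mathbb{E}[X_i]$ so $\sum_i p_i = \mu$. For any $t > 0$, Markov's inequality applied to $e^{tX}$ gives
\[
\mathbb{P}[X \geq \mu+\lambda] \leq e^{-t(\mu+\lambda)} \mathbb{E}[e^{tX}] = e^{-t(\mu+\lambda)} \prod_{i=1}^{\ell} \bigl(1 + p_i(e^t - 1)\bigr),
\]
and the elementary inequality $1+y \leq e^y$ yields $\mathbb{E}[e^{tX}] \leq e^{\mu(e^t-1)}$. Choosing the minimizer $t = \ln(1 + \lambda/\mu)$ and setting $x := \lambda/\mu \in [0,1]$, this simplifies to $\mathbb{P}[X \geq \mu+\lambda] \leq \exp(-\mu\,\varphi(x))$ with $\varphi(x) := (1+x)\ln(1+x) - x$. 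An analogous argument with $e^{-tX}$ (i.e.\ negative exponential parameter) controls $\mathbb{P}[X \leq \mu-\lambda] \leq \exp(-\mu\,\psi(x))$ where $\psi(x) := (1-x)\ln(1-x) + x$; a short computation shows $\psi \geq \varphi$ on $[0,1)$, so it is enough to bound $\varphi$ from below.

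The only remaining step is the calculus inequality $\varphi(x) \geq x^2/4$ for $0 \leq x \leq 1$. I would prove it by setting $g(x) := \varphi(x) - x^2/4$, computing $g(0) = 0$ and $g'(x) = \ln(1+x) - x/2$ with $g'(0) = 0$, and observing $g''(x) = \tfrac{1}{1+x} - \tfrac{1}{2} \geq 0$ for $x \in [0,1]$; hence $g'$ is nondecreasing and nonnegative, so $g \geq 0$ on $[0,1]$. Substituting $x = \lambda/\mu$ back gives each tail at most $e^{-\lambda^2/(4\mu)}$, and the union bound over the two tails yields the desired $2e^{-\lambda^2/(4\mu)}$.

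I do not anticipate any real obstacle: every ingredient (Markov's inequality, factorization via independence, optimization of the MGF, and the quadratic lower bound on $\varphi$) is completely standard. If I wanted to shorten the exposition I could cite the calculus inequality directly or extract both tails simultaneously using $|e^{tX} - 1|$-type arguments, but writing it as above keeps the constant $\tfrac{1}{4}$ transparent and matches the form stated in the lemma.
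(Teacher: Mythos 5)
Your proof is correct, and each step checks out: the MGF bound $\mathbb{E}[e^{tX}]\leq e^{\mu(e^t-1)}$, the optimized exponent $\varphi(x)=(1+x)\ln(1+x)-x$, the comparison $\psi\geq\varphi$ for the lower tail, and the calculus inequality $\varphi(x)\geq x^2/4$ on $[0,1]$ are all valid. The paper itself gives no proof of this lemma — it is quoted as a classical Chernoff-type estimate with a pointer to Alon--Spencer — and your argument is precisely the standard derivation that reference contains, so there is nothing to compare beyond noting that you have supplied the omitted details correctly.
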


\begin{proof}[Proof of Lemma \ref{constructionF}]
Consider the random tournament $F_0 = \mathcal{F}_0(n,\frac{1}{2})$ on the vertex set $[n]$, where for each $1\leq i<j \leq n$, we select each of the arcs $(i,j)$ and $(j,i)$ with a probability of $1/2$ uniformly at random, independently of other pairs.

Fix a vertex $u\in V(F_0)$. For $v\in V(F_0)\setminus \{u\}$, let $X_{v}$ be a random variable such that $X_{v}=1$ if $u\rightarrow v$ and $X_{v}=0$ if $v \rightarrow u$.
Let $X=\sum_{v \in V(F_0)\setminus \{u\}} X_v$.
Note that $\mu=E[X]=\frac{n-1}{2}$.
By setting $\lambda=\frac{n}{6}+\frac{1}{2}$ and using Lemma~\ref{Chernoff}, we have that $\mathbb{P}[|X-\mu|>\lambda] \leq 2e^{-\frac{\lambda^2}{4\mu}} \leq 2e^{-\frac{n}{80}}.$
Let $A_u$ be the event that the in-degree or out-degree of vertex $u$ is large than $\frac{2n}{3}$, i.e., $|X-\frac{n-1}{2}|>\frac{n}{6}+\frac{1}{2}$.
Define the event $A:=\bigcup_{u\in V(F_0)}A_u$.
By the union bound, we have that
\begin{equation}\label{bad events}
\mathbb{P}[A]\leq \sum_{u\in V(F_0)}\mathbb{P}[A_u]< 2ne^{-\frac{n}{80}}.
\end{equation}

Next, consider disjoint sets $A_1,A_2\subseteq V(F_0)$ with $|A_1|=|A_2|=\sqrt{n}$.
Let $B_{A_1,A_2}$ be the event that all arcs $(a_1,a_2)$ with $a_1\in A_1$ and $a_2\in A_2$ appear in $\mathcal{F}_0(n,\frac{1}{2})$.
Clearly, $\mathbb{P}[B_{A_1,A_2}]=\frac{1}{2^n}$.
Define the event $B$ be the union of events $B_{A_1,A_2}$ over all possible disjoint sets $A_1, A_2$.
Again by the union bound, we have that
\begin{equation}\label{B}
\mathbb{P}[B]\leq \sum_{A_1, A_2 \subseteq V(F_0), A_1 \cap A_2=\emptyset}\mathbb{P}[B_{A_1,A_2}]\leq 2\binom{n}{\sqrt{n}}\binom{n-\sqrt{n}}{\sqrt{n}}\frac{1}{2^n}\leq n^{\sqrt{n}}2^{-(n-1)}.
\end{equation}

Finally, let $C$ be the event that there is a subset $S$ with $|S|>\frac{2n}{13}-\sqrt{n}$ such that $F_0[S]$ is acyclic.
Note that a tournament $T$ is acyclic if and only if it is transitive.
We compute that the probability that the random tournament $F_0[S]$ is transitive is $\frac{|S|!}{2^{\binom{|S|}{2}}}$ for any set $S\subseteq V(F_0)$.
As we choose $n$ to be sufficiently large, any sets $S\subseteq V(F_0)$ with $|S|\geq \frac{2n}{13}-\sqrt{n}$ satisfy that $|S|\geq \frac{n}{7}$.
By the union bound,
\begin{equation}\label{C}
\mathbb{P}[C]\leq \sum_{|S|\geq \frac{2n}{13}-\sqrt{n}} \binom{n}{|S|}\frac{|S|!}{2^{\binom{|S|}{2}}}\leq n^{\frac{n}{7}+1}2^{-\frac{n^2}{20}}.
\end{equation}
Combining the upper bounds of $P[A],P[B]$ and $P[C]$, we deduce that
\begin{equation}
\begin{aligned}
\mathbb{P}[\overline{A}\cap \overline{B} \cap \overline{C}]&=1-\mathbb{P}[A\cup B\cup C]\geq 1-(\mathbb{P}[A]+\mathbb{P}[B]+\mathbb{P}[C])\\
&\geq 1-(2ne^{-\frac{n}{80}}+n^{\sqrt{n}}2^{-(n-1)}+n^{\frac{n}{7}+1}2^{-\frac{n^2}{20}})>0,
\end{aligned}
\end{equation}
where the first inequality we use the union bound.
Therefore, with positive probability, the bad events $A,B$ and $C$ do not happen.
This implies that there is an $n$-vertex tournament $F_0$ satisfying all the three conditions.
\end{proof}

In the next subsection, we use this tournament $F_0$ to construct rooted digraphs $F_i^{\bullet \bullet}$, $F_i^{\dagger \bullet \bullet}$ and necklaces $D_{\ell,F_i^{\dagger \bullet \bullet}}$, for all $i\in [s]$.
Here $s,\ell$ are arbitrary fixed positive integers.

\subsubsection{Rooted digraphs $F_i^{\bullet \bullet}$, $F_i^{\dagger \bullet \bullet}$ and necklaces $D_{\ell,F_i^{\dagger \bullet \bullet}}$}\label{subsec:F_i}
Given $s\in \mathbb{N}^+$, let $m>100s$ be a sufficiently large integer such that we can obtain a tournament $F_0$ with $V(F_0)=[m]$ by Lemma~\ref{constructionF}.
First, we will use this $F_0$ to construct the rooted digraph $F_i^{\bullet \bullet}$ with roots $z_i,w_i$ for each $i\in [s]$ as follows.

Let $k_1,k_2,\dots,k_s\in (\frac{2m}{3}+2,\frac{5m}{6})$ be positive integers satisfying $k_i> k_{i+1}+1$.
Fix $i\in [s]$.
Let $F_i^{\bullet \bullet}$ be obtained from a copy of $F_0$ by the following operations:
\begin{itemize}
    \item Adding two additional new vertices $z_i,w_i$ as the two {\it roots} so that the vertex set of $F_i^{\bullet \bullet}$ is $\{z_i, w_i\} \cup [m]$;
    \item For every vertex $1\leq v\leq k_i$ in $V(F_0)$, adding the arcs $z_i\rightarrow v \rightarrow w_i$; and
    \item For every vertex $k_i< u\leq m$ in $V(F_0)$, adding the arcs $z_i\leftarrow u \leftarrow w_i$.
\end{itemize}
So $F_i^{\bullet \bullet}$ is a rooted digraph with roots $z_i,w_i$, where all pairs of vertices, except for $\{z_i,w_i\}$, are adjacent.
Moreover, we can derive that ${\rm max}\{\Delta^+(F_i^{\bullet \bullet}),\Delta^-(F_i^{\bullet \bullet})\}<\frac{5m}{6}$.

Next, we obtain several homomorphism properties on $F_i^{\bullet \bullet}$ which we will frequently use later.
For fixed $i, j\in [s]$, we consider the rooted digraphs $F_i^{\bullet \bullet }$ with vertex set \[ V(F_i^{\bullet \bullet }) = V(F_0^i)\cup\{z_i,w_i\}\] and similarly $F_j^{\bullet \bullet }$ with vertex set $V(F_0^j)\cup\{z_j,w_j\}$, where $F_0^i$ and $F_0^j$ are two identical copies of $F_0$. In the following, when using the notation ${\rm Hom}(F^{\bullet \bullet}, H)$, we view the rooted digraph $F^{\bullet \bullet}$ as a normal digraph. That is, ${\rm Hom}(F^{\bullet \bullet}, H)$ represents the set of all general homomorphisms from the digraph $F^{\bullet \bullet}$ to the digraph $H$.

\begin{claim}\label{homoT}
Let $i\in [s]$ and $T$ be a tournament.
Then every $\phi\in {\rm Hom}(F_i^{\bullet \bullet},T)$, if exists, is an injection.
\end{claim}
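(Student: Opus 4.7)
The plan is to exploit the fact that, apart from the single unordered pair $\{z_i, w_i\}$, every two distinct vertices of $F_i^{\bullet\bullet}$ are joined by at least one arc. I will first note that a tournament $T$ contains no loop, so whenever $u, v$ are distinct vertices of $F_i^{\bullet\bullet}$ with an arc between them, any homomorphism $\phi : V(F_i^{\bullet\bullet}) \to V(T)$ must send them to distinct vertices of $T$, since otherwise the image of the arc would be a loop at $\phi(u) = \phi(v)$. This immediately handles every pair of distinct vertices of $F_i^{\bullet\bullet}$ except the pair of roots.

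It then remains to rule out $\phi(z_i) = \phi(w_i)$. I will do this by contradiction: suppose $\phi(z_i) = \phi(w_i) = x$ for some $x \in V(T)$. By the construction of $F_i^{\bullet\bullet}$, for every vertex $v \in V(F_0)$ with $1 \leq v \leq k_i$ the arcs $z_i \to v$ and $v \to w_i$ both belong to $F_i^{\bullet\bullet}$. Applying $\phi$ gives $x \to \phi(v)$ and $\phi(v) \to x$ in $T$; but $T$, being a tournament, has no digon, so this is impossible. Since $k_i > \tfrac{2m}{3} + 2 \geq 1$, at least one such $v$ exists, and the contradiction is obtained.

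Combining the two observations, $\phi$ is injective. The only place where a genuine argument (rather than a direct appeal to the definition of a tournament) is used is the second step, and even there the potential obstacle, namely the existence of a vertex $v$ with $z_i \to v$ and $v \to w_i$, has already been arranged by the construction in Subsection~\ref{subsec:F_i}. I therefore do not expect any real difficulty; the claim is a clean consequence of the two built-in design features of $F_i^{\bullet\bullet}$: its near-completeness and the presence of directed $2$-paths from $z_i$ to $w_i$ through $F_0$.
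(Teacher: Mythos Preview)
Your argument is correct and follows essentially the same approach as the paper: first reduce to the case $\phi(z_i)=\phi(w_i)$ using the near-completeness of $F_i^{\bullet\bullet}$, then derive a contradiction from a directed $2$-path $z_i\to v\to w_i$. Your write-up is in fact slightly more explicit than the paper's about why a loop/digon in $T$ is forbidden, but the logic is identical.
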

\begin{proof}
Assume that $\phi$ is not an injection.
Then there exists $u,v\in V(F_i^{\bullet \bullet })$ such that $\phi(u)=\phi(v)$. Since there is an arc between any two vertices in
$F_i^{\bullet \bullet }$ except between the two roots, the only possibility is $\phi(z_i)=\phi(w_i)$.
Let $v_0$ some vertex in $V(F_0^i)$ such that $z_i\rightarrow v_0\rightarrow w_i$. Then it holds that $\phi(z_i)\rightarrow \phi(v_0) \rightarrow \phi(w_i)$.
This contradicts $\phi(z_i)=\phi(w_i)$.
\end{proof}

\begin{claim}\label{homo}
Let $i, j \in [s]$, which may be equal or not.
Then every $\phi\in {\rm Hom}(F_i^{\bullet \bullet},F_j^{\bullet \bullet})$, if exists, is a bijection.
\end{claim}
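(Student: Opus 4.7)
The plan is to first show that every $\phi \in {\rm Hom}(F_i^{\bullet \bullet}, F_j^{\bullet \bullet})$ is injective, and then to conclude bijectivity immediately from $|V(F_i^{\bullet \bullet})| = |V(F_j^{\bullet \bullet})| = m+2$. Although this looks superficially like Claim \ref{homoT}, I cannot invoke that claim as a black box because the target $F_j^{\bullet \bullet}$ is not a tournament --- its two roots $z_j, w_j$ are non-adjacent. So I would adapt the argument of Claim \ref{homoT}, exploiting instead the fact that $F_j^{\bullet \bullet}$ is an \emph{oriented graph} (no loops and no pair of mutually reversed arcs), a property that is immediate from the construction: the base $F_0^j$ is a tournament, and every arc attached to the roots $z_j, w_j$ is added in only one direction.

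Suppose for contradiction that $\phi(u) = \phi(v)$ for some distinct $u, v \in V(F_i^{\bullet \bullet})$. I would split into two cases. \textbf{Case 1:} $\{u, v\} \ne \{z_i, w_i\}$. By construction the only unordered pair of vertices of $F_i^{\bullet \bullet}$ with no arc between them is $\{z_i, w_i\}$; every other pair carries an arc (either from $F_0^i$, or from the root-to-base attachments). The arc between $u$ and $v$ therefore maps to a loop at $\phi(u)=\phi(v)$ in $F_j^{\bullet \bullet}$, contradicting looplessness. \textbf{Case 2:} $\{u, v\} = \{z_i, w_i\}$, so $\phi(z_i) = \phi(w_i) =: x$. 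Since $k_i > \tfrac{2m}{3}+2 \ge 1$, there exists a base vertex $v_0 \in V(F_0^i)$ with $1 \le v_0 \le k_i$, and thus $z_i \to v_0 \to w_i$ in $F_i^{\bullet \bullet}$; applying $\phi$ produces both $x \to \phi(v_0)$ and $\phi(v_0) \to x$ in $F_j^{\bullet \bullet}$, which is either a loop (if $x=\phi(v_0)$) or a pair of mutually reversed arcs (if not) --- neither is allowed, yielding the contradiction.

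The only point that requires a genuine check is the assertion that $F_j^{\bullet \bullet}$ is an oriented graph, and this is exactly where the main (albeit small) obstacle lies. It follows directly from the asymmetric manner in which each root is attached to the base: low-indexed vertices $v \le k_j$ receive $z_j \to v$ and $v \to w_j$ only, while high-indexed vertices $u > k_j$ receive $u \to z_j$ and $w_j \to u$ only, so no pair of distinct vertices of $F_j^{\bullet \bullet}$ ever carries arcs in both directions (and of course $F_j^{\bullet \bullet}$ has no loops). Once injectivity is established, bijectivity is automatic by the pigeonhole principle from the equality of vertex counts, completing the proof.
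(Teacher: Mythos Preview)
Your proposal is correct and follows essentially the same approach as the paper: reduce bijectivity to injectivity via equal vertex counts, rule out non-root collisions because every such pair carries an arc (which would map to a loop), and rule out a root collision via a $z_i\to v_0\to w_i$ path whose image would create a $2$-cycle or loop in the oriented graph $F_j^{\bullet\bullet}$. Your version is in fact slightly more careful than the paper's in spelling out why Claim~\ref{homoT} does not apply verbatim and in verifying explicitly that $F_j^{\bullet\bullet}$ is an oriented graph.
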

\begin{proof}
Since $|V(F_i^{\bullet \bullet })|=|V(F_j^{\bullet \bullet })|$, it suffices to show that $\phi$ is an injection.
Assume that $\phi$ is not an injection. Then there exists $u,v\in V(F_i^{\bullet \bullet })$ such that $\phi(u)=\phi(v)$.
Since there is an arc between any two vertices in
$F_i^{\bullet \bullet }$ except between the two roots, the only possibility is $\phi(z_i)=\phi(w_i)$. Let $v_0$ some vertex in $V(F_0) \subset V(F_i^{\bullet \bullet })$ such that $z_i\rightarrow v_0\rightarrow w_i$. Then it holds that $\phi(z_i)\rightarrow \phi(v_0) \rightarrow \phi(w_i)$. This contradicts $\phi(z_i)=\phi(w_i)$.
\end{proof}

The next claim demonstrates that every $\phi\in {\rm Hom}(F_i^{\bullet \bullet},F_j^{\bullet \bullet})$ maps roots to roots.

\begin{claim}\label{autoroot}
    Let $i,j \in [s]$. If there exists some $\phi\in {\rm Hom}(F_i^{\bullet \bullet},F_j^{\bullet \bullet})$, then it holds that $\phi(z_i),\phi(w_i)\in \{z_j,w_j\}$.
\end{claim}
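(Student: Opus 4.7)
The plan is to leverage Claim~\ref{homo} to conclude that $\phi$ is a bijection, and then upgrade this to an isomorphism via an arc-count. From the construction of $F_i^{\bullet\bullet}$, every pair of vertices other than the pair of roots $\{z_i, w_i\}$ is joined by exactly one arc (an arc of $F_0$ if both lie in $[m]$, and one of the arcs $z_i\to v\to w_i$ or $w_i\to v\to z_i$ if one of them is a root), so $|E(F_i^{\bullet\bullet})|=\binom{m}{2}+2m$, and the same count holds for $F_j^{\bullet\bullet}$. Moreover, $\{z_i,w_i\}$ is the unique non-adjacent pair in $F_i^{\bullet\bullet}$, and similarly for $\{z_j,w_j\}$ in $F_j^{\bullet\bullet}$; these two facts will be the entire combinatorial content.

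Given $\phi\in{\rm Hom}(F_i^{\bullet\bullet},F_j^{\bullet\bullet})$, Claim~\ref{homo} says $\phi$ is a bijection on vertex sets. Being a homomorphism, $\phi$ sends each arc of $F_i^{\bullet\bullet}$ to an arc of $F_j^{\bullet\bullet}$, and by injectivity these images are pairwise distinct. Hence $|\phi(E(F_i^{\bullet\bullet}))|=|E(F_i^{\bullet\bullet})|=|E(F_j^{\bullet\bullet})|$, forcing $\phi(E(F_i^{\bullet\bullet}))=E(F_j^{\bullet\bullet})$. Thus $\phi^{-1}$ is also a homomorphism, so $\phi$ is in fact a digraph isomorphism between $F_i^{\bullet\bullet}$ and $F_j^{\bullet\bullet}$ (viewed as unrooted digraphs).

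Since $\phi$ is an isomorphism, it preserves adjacency in both directions: $\{u,v\}$ is non-adjacent in $F_i^{\bullet\bullet}$ if and only if $\{\phi(u),\phi(v)\}$ is non-adjacent in $F_j^{\bullet\bullet}$. As the only non-adjacent pair in $F_i^{\bullet\bullet}$ is $\{z_i,w_i\}$ and the only non-adjacent pair in $F_j^{\bullet\bullet}$ is $\{z_j,w_j\}$, we conclude that $\{\phi(z_i),\phi(w_i)\}=\{z_j,w_j\}$, which in particular gives $\phi(z_i),\phi(w_i)\in\{z_j,w_j\}$, as required.

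I do not foresee any real obstacle here: the only thing one must verify carefully is the arc-count, but that follows immediately from the bullet-point construction of $F_i^{\bullet\bullet}$ since $F_0$ is a tournament and each non-root vertex receives exactly one arc in either direction from each of $z_i$ and $w_i$. The argument is essentially the observation that a bijective homomorphism between two finite digraphs with the same number of arcs is automatically an isomorphism, combined with the uniqueness of the non-adjacent pair on each side.
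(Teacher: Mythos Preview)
Your proof is correct, and it takes a genuinely different route from the paper. The paper argues via degree constraints: since $\phi$ is a bijection (Claim~\ref{homo}), an injective homomorphism forces $d_{F_j^{\bullet\bullet}}^+(\phi(z_i))\ge d_{F_i^{\bullet\bullet}}^+(z_i)=k_i>\tfrac{2m}{3}+2$, whereas any non-root vertex $u\in V(F_0^j)$ has $d_{F_j^{\bullet\bullet}}^+(u)\le \Delta^+(F_0)+1\le \tfrac{2m}{3}+1$ by condition~(I) of Lemma~\ref{constructionF}; this contradiction (and the symmetric in-degree argument for $w_i$) rules out $\phi(z_i),\phi(w_i)\in V(F_0^j)$. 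Your argument instead upgrades the bijection to an isomorphism by the arc-count $|E(F_i^{\bullet\bullet})|=\binom{m}{2}+2m=|E(F_j^{\bullet\bullet})|$, and then exploits the purely structural fact that the pair of roots is the unique non-adjacent pair on each side. Your approach is more self-contained: it uses neither condition~(I) of Lemma~\ref{constructionF} nor the particular choice $k_i>\tfrac{2m}{3}+2$, only that $F_0$ is a tournament and the roots are mutually non-adjacent. The paper's degree argument, on the other hand, foreshadows the degree comparison used in Claim~\ref{neq}, though your isomorphism conclusion would serve that purpose equally well.
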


\begin{proof}
    If not, we may assume there exists a homomorphism $\phi$ such that $\phi(z_i)=u\in V(F_0^j)$. Note that $z_i$ is the root such that $d_{F_0^i}^+(z_i)>\frac{2m}{3}+2$. By Claim \ref{homo}, $\phi$ is a bijection. This shows that $d_{F_0^i}^+(z_i)=d_{F_j^{\bullet \bullet}}^+(u)$. Noted that by the definition of $F_0^j$, $d_{F_0^j}^+(u) \leq \frac{2m}{3}$, which implies $d_{F_j^{\bullet \bullet}}^+(u)\leq \frac{2m}{3}+1$. However, the definition of $F_i^{\bullet \bullet }$ implies that $d_{F_0^i}^+(z_i) > \frac{2m}{3}+2> d_{F_j^{\bullet \bullet}}^+(u)$, which leads to a contradiction.  The fact $\phi(w_i)$ can not be some $u \in V(F_0^j)$ follows from the similar argument.
\end{proof}

The last claim shows that there is no homomorphism from $F_i^{\bullet \bullet}$ to $F_j^{\bullet \bullet}$ if $i\neq j$.

\begin{claim}\label{neq}
Let $i,j\in [s]$. If $i\neq j$, then ${\rm Hom}(F_i^{\bullet \bullet },F_j^{\bullet \bullet })=\emptyset.$
\end{claim}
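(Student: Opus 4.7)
\textbf{Plan for Claim~\ref{neq}.} My plan is to suppose, for contradiction, that some $\phi \in \mathrm{Hom}(F_i^{\bullet\bullet},F_j^{\bullet\bullet})$ exists and exploit the information already established in Claims~\ref{homo} and~\ref{autoroot}. By Claim~\ref{homo}, $\phi$ is a bijection on vertex sets, and by Claim~\ref{autoroot} it must satisfy $\{\phi(z_i),\phi(w_i)\} = \{z_j,w_j\}$. So there are only two identifications of the roots to rule out: either $\phi(z_i) = z_j$ and $\phi(w_i) = w_j$, or $\phi(z_i) = w_j$ and $\phi(w_i) = z_j$.

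The main tool is the elementary observation that a bijective homomorphism weakly preserves out-degrees: since $\phi$ is an injection on vertices, distinct out-neighbors $u$ of $v$ in $F_i^{\bullet\bullet}$ map to distinct out-neighbors $\phi(u)$ of $\phi(v)$ in $F_j^{\bullet\bullet}$, so $d^+_{F_j^{\bullet\bullet}}(\phi(v)) \geq d^+_{F_i^{\bullet\bullet}}(v)$ for every $v$. Combined with the direct read-off from the construction that $d^+(z_i) = k_i$ and $d^+(w_i) = m - k_i$ (and likewise for $j$), the two cases then dispatch quickly.

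In the first case, the identifications $\phi(z_i)=z_j$ and $\phi(w_i)=w_j$ give $k_j \geq k_i$ and $m-k_j \geq m-k_i$, which together force $k_i = k_j$; this contradicts the hypothesis $k_i > k_{i+1}+1$, which makes the $k_\ell$'s pairwise distinct. In the second case, $\phi(z_i) = w_j$ gives $m - k_j \geq k_i$, i.e.\ $k_i + k_j \leq m$; but the range restriction $k_i,k_j \in (\frac{2m}{3}+2,\frac{5m}{6})$ already yields $k_i + k_j > \frac{4m}{3} + 4 > m$, a contradiction.

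I do not foresee a real obstacle: the entire setup in Subsection~\ref{subsec:F_i} has been engineered so that each of the two root identifications is blocked by one of the two design choices — the strict monotonicity of the sequence $k_1,\dots,k_s$ handles Case~1, while placing every $k_i$ strictly above $m/2$ (in fact above $\frac{2m}{3}$) makes the involution $k \mapsto m-k$ send the interval $(\frac{2m}{3}+2,\frac{5m}{6})$ off itself and thereby handles Case~2.
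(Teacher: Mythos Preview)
Your proposal is correct and follows essentially the same approach as the paper: both argue by contradiction, invoke Claims~\ref{homo} and~\ref{autoroot} to reduce to the two root-identifications, and then use out-degree comparison at the roots (with $d^+(z_i)=k_i$, $d^+(w_i)=m-k_i$) together with the design constraints $k_i>k_{i+1}+1$ and $k_i,k_j>\tfrac{2m}{3}+2$ to obtain a contradiction. The only cosmetic difference is that the paper uses degree \emph{equality} (legitimate since both digraphs have the same number of arcs), whereas you work with the inequality $d^+(\phi(v))\geq d^+(v)$ and combine two such inequalities in Case~1; the substance is identical.
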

\begin{proof}
Assume that $i\neq j$, if there exists a homomorphism $\phi\in {\rm Hom}(F_i^{\bullet \bullet },F_j^{\bullet \bullet })$, then by Claim \ref{autoroot}, we have that $\phi(z_i) \in \{z_j,w_j\}$. Note that by Claim \ref{homo}, $\phi$ is a bijection and this implies that $d_{F_0^i}^+(z_i)=d_{F_0^j}^+(z_j)$ or $d_{F_0^i}^+(z_i)=d_{F_0^j}^+(w_j)$. However, by the definition of $F_i^{\bullet \bullet }$ and $F_j^{\bullet \bullet }$, it is impossible because $|d_{F_0^i}^+(z_i)-d_{F_0^j}^+(z_j)|\geq 2$ and $|d_{F_0^i}^+(z_i)-d_{F_0^j}^+(w_j)|\geq 2$. Thus, we have that ${\rm Hom}(F_i^{\bullet \bullet },F_j^{\bullet \bullet })=\emptyset.$
\end{proof}

Next, we introduce the symmetrization trick, and construct
the rooted digraph $F_i^{\dagger \bullet \bullet}$, using $F_i^{\bullet \bullet}$, as follows:.
\begin{itemize}
\item Let $\mathop{F_i^{\bullet \bullet}}\limits^{\longleftarrow}$ and $\mathop{F_i^{\bullet \bullet}}\limits^{\longrightarrow}$ be two copies of $\mathop{F_i^{\bullet \bullet}}$ such that $V(\mathop{F_i^{\bullet \bullet}}\limits^{\longleftarrow})=V(\mathop{F_0^i}\limits^{\longleftarrow})\cup \{z_i^1,w_i^1\}$ and $V(\mathop{F_i^{\bullet \bullet}}\limits^{\longrightarrow})=V(\mathop{F_0^i}\limits^{\longrightarrow})\cup \{z_i^2,w_i^2\}$. Here, $\mathop{F_0^i}\limits^{\longleftarrow}$ and $\mathop{F_0^i}\limits^{\longrightarrow}$ are two identical copies of $F_0$, the vertices $z_i^1,w_i^1$ are roots of $\mathop{F_i^{\bullet \bullet}}\limits^{\longleftarrow}$, and the vertices $z_i^2,w_i^2$ are roots of $\mathop{F_i^{\bullet \bullet}}\limits^{\longrightarrow}$.
\item Let $F_i^{\dagger \bullet \bullet}$ be obtained from $\mathop{F_i^{\bullet \bullet}}\limits^{\longleftarrow}$ and $\mathop{F_i^{\bullet \bullet}}\limits^{\longrightarrow}$ by identifying $z_i^1$ and $w_i^2$, writing $z_i:=z_i^1=w_i^2$, and by identifying $z_i^2$ and $w_i^1$, writing $w_i:=w_i^1=z_i^2$.
\end{itemize}
It may be helpful to regard $\mathop{F_i^{\bullet \bullet}}\limits^{\longleftarrow}$ as the mirror image of $\mathop{F_i^{\bullet \bullet}}\limits^{\longrightarrow}$ in $F_i^{\dagger \bullet \bullet}$, and vice versa.
Summarizing, $F_i^{\dagger \bullet \bullet}$ is a rooted digraph with roots $z_i, w_i$ such that $V(F_i^{\dagger \bullet \bullet})=V(\mathop{F_0^i}\limits^{\longleftarrow})\cup V(\mathop{F_0^i}\limits^{\longrightarrow})\cup \{z_i,w_i\}$.
We refer to Figure~\ref{figure F} for an illustration how a copy of $F_i^{\dagger \bullet \bullet}$ is built.

\begin{figure}[htbp]
\centering
\includegraphics[width=0.75\textwidth]{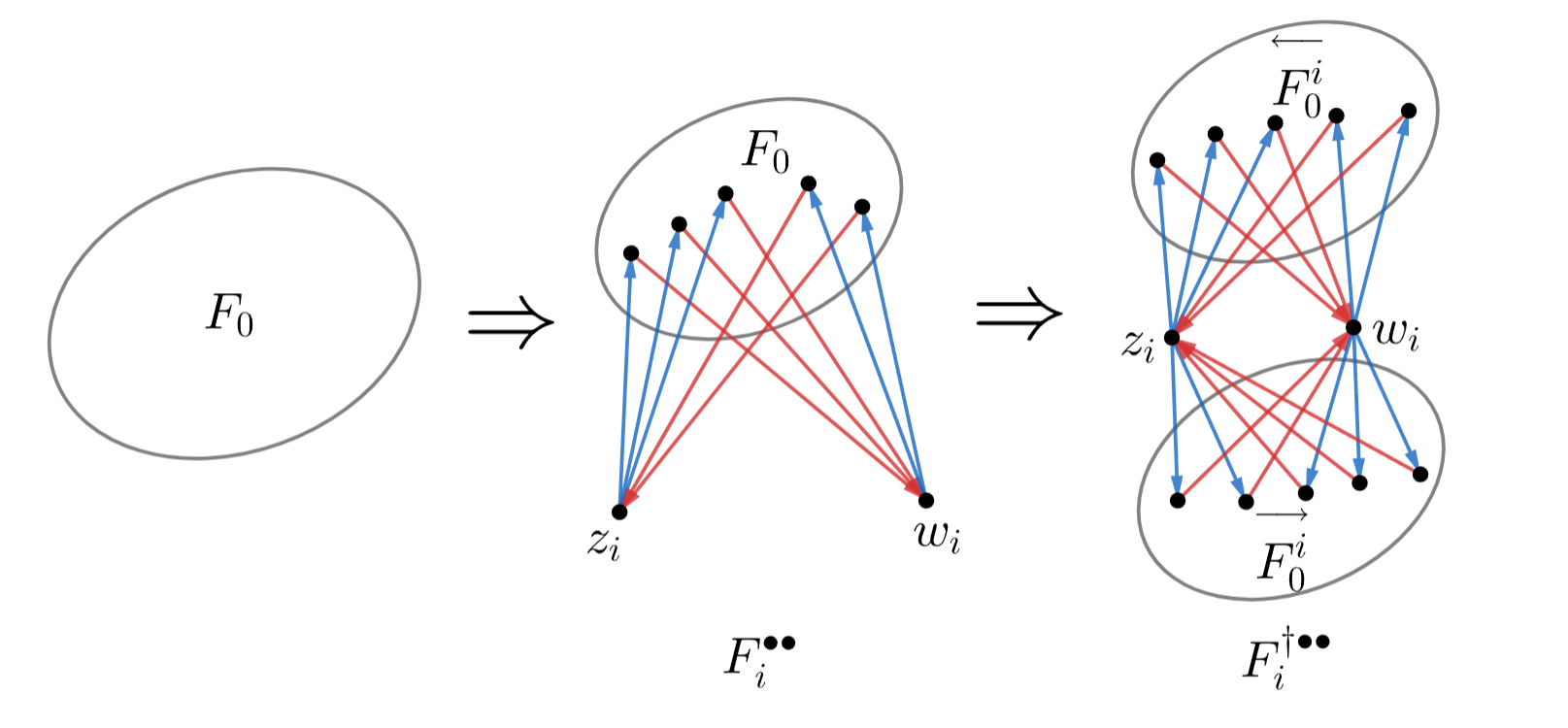}
\caption{The construction of digraph $F_i^{\dagger \bullet \bullet}$ from two copies of $F_i^{\bullet \bullet}$}
\label{figure F}
\end{figure}

This symmetrization trick implies that for any tournament $T$ and every pair of vertices $x,y\in V(T)$, it holds that
\[t_{x,y}(F_i^{\dagger \bullet \bullet},T)=t_{y,x}(F_i^{\dagger \bullet \bullet},T).\]

\begin{dfn}\label{dfn:M}
    Let $M_i(T)$ be the matrix with rows and columns indexed by $V(T)$ and for $x, y \in V(T)$, the $(x,y)$-th entry is
\[
M_i(T)[x,y] = t_{x,y}(F_i^{\dagger \bullet \bullet},T)=t_{y,x}(F_i^{\dagger \bullet \bullet},T).
\]
\end{dfn}
Therefore $M_i(T)$ is a symmetric matrix. By the definitions of $F_i^{\dagger \bullet \bullet}$ and $D_{\ell,F_i^{\dagger \bullet \bullet}}$, we can immediately obtain the main lemma of this subsection. Here, we also view the symmetric matrix $M_i(T)$ as the edge-weighted graph whose corresponding adjacency matrix is $M_i(T)$.
\begin{lem}\label{equ:t(C_ell)}
    For any tournament $T$, it holds that
    \begin{equation}\label{t(C_ell)}
    t(D_{\ell,F_i^{\dagger \bullet \bullet}},T)=t(C_\ell, M_i(T))=\sum_{j=1}^{|V(T)|}\lambda_j^{\ell},
    \end{equation}
    where $\lambda_j$'s are eigenvalues of the symmetric matrix $M_i(T)$.
\end{lem}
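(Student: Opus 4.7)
The plan is a short bookkeeping argument: decompose a necklace homomorphism into a choice of hub images plus independent extensions of each copy of $F_i^{\dagger \bullet \bullet}$, rewrite the sum in terms of the matrix $M_i(T)$, and then invoke the trace--eigenvalue identity using the symmetry of $M_i(T)$.

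First I would unpack Definition~\ref{necklace}. The necklace $D_{\ell, F_i^{\dagger \bullet \bullet}}$ consists of $\ell$ pairwise internally disjoint copies of $F_i^{\dagger \bullet \bullet}$ glued solely along a cycle of hubs $x_1, \ldots, x_\ell$, with the $k$-th copy identifying its roots $z, w$ with $x_k, x_{k+1}$ respectively (indices mod $\ell$). Since different copies share no non-hub vertex, specifying a homomorphism $\phi \colon D_{\ell, F_i^{\dagger \bullet \bullet}} \to T$ amounts to specifying the hub images $y_k := \phi(x_k) \in V(T)$ together with, independently for each $k$, a conditional homomorphism of the $k$-th $F_i^{\dagger \bullet \bullet}$-copy sending $z \mapsto y_k$ and $w \mapsto y_{k+1}$. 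Multiplicativity over the $\ell$ copies therefore gives
\[
\hom(D_{\ell, F_i^{\dagger \bullet \bullet}}, T) = \sum_{y_1, \ldots, y_\ell \in V(T)} \prod_{k=1}^\ell \hom_{y_k, y_{k+1}}(F_i^{\dagger \bullet \bullet}, T).
\]

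Next I would normalize to densities. Using $|V(D_{\ell, F_i^{\dagger \bullet \bullet}})| = \ell(|V(F_i^{\dagger \bullet \bullet})| - 1)$ together with $\hom_{y, y'}(F_i^{\dagger \bullet \bullet}, T) = M_i(T)[y, y'] \cdot |V(T)|^{|V(F_i^{\dagger \bullet \bullet})| - 2}$, which comes directly from Definition~\ref{dfn:M}, dividing the previous display by $|V(T)|^{|V(D_{\ell, F_i^{\dagger \bullet \bullet}})|}$ telescopes the auxiliary powers of $|V(T)|$ down to $|V(T)|^\ell$, yielding
\[
t(D_{\ell, F_i^{\dagger \bullet \bullet}}, T) = \frac{1}{|V(T)|^\ell} \sum_{y_1, \ldots, y_\ell \in V(T)} \prod_{k=1}^\ell M_i(T)[y_k, y_{k+1}],
\]
which by the weighted-graph density definition applied to the cycle $C_\ell$ and the edge-weighted graph on $V(T)$ with weight matrix $M_i(T)$ is exactly $t(C_\ell, M_i(T))$; this gives the first equality. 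Recognizing the double sum as $\mathrm{tr}(M_i(T)^\ell)$, the symmetrization trick built into $F_i^{\dagger \bullet \bullet}$ guarantees $M_i(T)[x, y] = M_i(T)[y, x]$ for all $x, y$, so $M_i(T)$ is a real symmetric matrix, orthogonally diagonalizable with real eigenvalues $\lambda_1, \ldots, \lambda_{|V(T)|}$ (with the normalization that absorbs the $1/|V(T)|^\ell$ factor), and the trace-eigenvalue identity $\mathrm{tr}(M_i(T)^\ell) = \sum_j \lambda_j^\ell$ finishes the proof.

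I do not expect a serious obstacle; the statement is essentially a bookkeeping identity once two features of the construction are in place: (i) the internal disjointness of distinct copies in the necklace, immediate from Definition~\ref{necklace}, which is what lets the homomorphism count factor into a product of conditional counts; and (ii) the symmetry of $M_i(T)$, which is precisely the purpose of the $\dagger$ construction. The symmetry is what rescues the power-sum representation in the tournament setting, since a naive attempt would produce a non-symmetric matrix whose $\ell$-th trace cannot generally be written as a sum of real eigenvalue powers -- exactly the difficulty flagged in Subsection~\ref{subsec:F-necklace} as the main conceptual hurdle over the graph analogue in \cite{BRW}.
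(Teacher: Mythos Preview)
Your proposal is correct and follows exactly the same approach as the paper's proof: factor the necklace homomorphism count through the hub vertices into a product of conditional counts, recognize the result as $t(C_\ell, M_i(T))$, and then apply the trace--eigenvalue identity (the paper compresses these steps into one line plus a citation to \cite{LL12}, while you spell them out). Your parenthetical remark about the $1/|V(T)|^\ell$ normalization is apt: as literally stated the second equality of \eqref{t(C_ell)} only holds if the $\lambda_j$ are taken to be the eigenvalues of $M_i(T)/|V(T)|$ rather than of $M_i(T)$ itself, a harmless imprecision in the paper since the lemma is only ever used in ratios where the factor cancels.
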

\begin{proof}
For any tournament $T$, by the definition of the necklace $D_{\ell, F_i^{\dagger \bullet \bullet}}$ and the matrix $M_i(T)$, it holds that ${\rm hom}(D_{\ell,F_i^{\dagger \bullet \bullet}},T)={\rm hom}(C_{\ell},M_i(T)\cdot |V(T)|^{2m})$, which implies that $t(D_{\ell,F_i^{\dagger \bullet \bullet}},T)=t(C_\ell, M_i(T)).$ The last equality in~\eqref{t(C_ell)} follows from a standard argument in algebraic graph theory (e.g., see~\cite{LL12},  Section 5).
\end{proof}

\subsection{The tournament $T_G^\bigstar$: an intermediate step to Lemma~\ref{copies}}\label{subsec:Tstar}

In this subsection, we will construct the host tournaments $T_G^\bigstar$ to realize the integral points in Main Lemma \ref{copies}.

The following lemma is the main result of this section.

\begin{lem}\label{graphon}
For any graph $G$, positive integers $r_1,r_2,...,r_s$ and the sequence of digraphs $F_i^{\dagger \bullet \bullet}$'s depending on $r_1,r_2,...,r_s$, there exists a tournament $T_G^\bigstar$
such that the following holds.
For every $i\in [s]$, let $A(r_i G)$ denote the adjacency matrix of $r_i$ disjoint copies of the simple graph $G$. Recall the definition of $M_i$ in Definition \ref{dfn:M}. Then there exists a constant $a_i>0$ such that
\[ M_i(T_G^\bigstar) = a_i A( r_i G)\]
up to removing all-zero columns and all-zero rows from $M_i(T_G^\bigstar)$.
\end{lem}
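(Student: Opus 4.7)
The plan is to construct $T_G^\bigstar$ by assembling, for each index $i \in [s]$, a disjoint bundle of $r_i$ ``$G$-gadgets,'' and then fusing all bundles into a single tournament by orienting every remaining arc according to a global linear order on the gadgets. In each $G$-gadget of index $i$, I introduce a family of root vertices $\{x_v : v \in V(G)\}$ indexed by $V(G)$, and for each edge $\{u,v\} \in E(G)$ I install a private copy of the interior of $F_i^{\dagger \bullet \bullet}$ between $x_u$ and $x_v$: that is, I attach $2m$ fresh non-root vertices together with all arcs they prescribe in $F_i^{\dagger \bullet \bullet}$, identifying $z_i = x_u$ and $w_i = x_v$. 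All remaining arcs in $T_G^\bigstar$ (between $x_u, x_v$ for non-edge pairs of the same gadget, between root vertices of distinct gadgets, between interior vertices of distinct gadgets, and across different indices $i$) are oriented transitively along the global order.

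Having fixed this construction, I would compute $M_i(T_G^\bigstar)[x,y] = t_{x,y}(F_i^{\dagger \bullet \bullet}, T_G^\bigstar)$ by a case analysis. Claims~\ref{homoT}--\ref{neq} show that every homomorphism $\phi \colon F_i^{\dagger \bullet \bullet} \to T_G^\bigstar$ sending $z_i \mapsto x$ and $w_i \mapsto y$ is an injection that sends roots to roots, and cannot lie inside a copy of $F_j^{\bullet \bullet}$ for $j \neq i$. If $\{x,y\}$ is an edge of the $j$-th $G$-copy for index $i$, then precisely the installed interior witnesses such embeddings, and their count is a positive constant $a_i$ depending only on $i$ (in particular, the same for every edge of every $G$-copy). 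For all other root-pairs, and for every pair $(x,y)$ involving a non-root vertex, I would show that no such embedding exists, so the corresponding entries of $M_i(T_G^\bigstar)$ vanish; the non-root rows and columns are then entirely zero and get removed, leaving exactly $a_i \, A(r_i G)$.

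The main obstacle is ruling out spurious $F_i^{\dagger \bullet \bullet}$-embeddings whose interior straddles multiple gadgets, or whose roots sit at non-root vertices. The three properties of $F_0$ granted by Lemma~\ref{constructionF} are the decisive tools. If a putative embedding spans two gadgets linked transitively, then a large portion of the image of one copy of $F_0$ lies inside a transitive, hence acyclic, region of $T_G^\bigstar$; but property~(III) says every sufficiently large subset of $F_0$ induces a directed cycle, giving a contradiction. If the embedding instead routed across the transitive bipartite link between two adjacent gadgets, property~(II) forbids this by excluding a large complete bipartite digraph inside the image. Property~(I), together with Claim~\ref{neq}, ensures that the roots of $F_i^{\dagger \bullet \bullet}$ cannot be mapped into a $j$-gadget for $j \neq i$ or onto a non-root interior vertex, because the in/out-degree profile fails to match. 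Carefully balancing these exclusions with the exact enumeration of embeddings on the intended edges of $r_i G$ constitutes the core technical work.
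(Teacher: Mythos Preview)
Your proposal is correct and follows essentially the same approach as the paper: the construction of $T_G^\bigstar$ matches (the paper spells out the transitive orientations in five explicit steps, including the arcs between the two halves of each attached $F_i^{\dagger\bullet\bullet}$ and between non-incident roots and interiors within a gadget), and the proof strategy of using properties (I)--(III) of $F_0$ to localize every homomorphism to a single installed copy is the same. Two small points to tighten when you write it out: the paper first factors $\hom_{x,y}(F_i^{\dagger\bullet\bullet},T)=\hom_{x,y}(F_i^{\bullet\bullet},T)\cdot\hom_{y,x}(F_i^{\bullet\bullet},T)$ and then analyzes $F_i^{\bullet\bullet}$-homomorphisms (Claims~\ref{homoT}--\ref{neq} are stated for $F_i^{\bullet\bullet}$, not $F_i^{\dagger\bullet\bullet}$), and the roles of (II) and (III) are slightly swapped in your sketch---property~(II) together with the degree bound from~(I) is what rules out the image straddling two transitively-linked blocks, while property~(III) is what forbids a large portion of the image from landing in the transitive base $T_0$.
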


 We will  later prove a  mathematically equivalent statement (see Lemma~\ref{graphon1}) after all the necessary definitions regarding $T_G^\bigstar$ are provided.
We emphasize that by symmetry, $t_{x,y}(F_i^{\dagger \bullet \bullet},T_G^\bigstar) = t_{y,x}(F_i^{\dagger \bullet \bullet},T_G^\bigstar)$ holds for all vertices $x,y \in V(T_G^{\bigstar})$.

\subsubsection{Definition of the tournament $T_G^\bigstar$}\label{subsec:construct Tstar}
Fix positive integers $r_1,...,r_s$ and an $n$-vertex graph $G$ with vertex set $V(G)=[n]$. In this subsection, we give an explicit definition of $T_G^\bigstar$.

To begin with, we construct a tournament $T_i$ for every $i\in [s]$ by the following four steps (see Figure \ref{steps} for an illustration):

\begin{itemize}
        \item[{\bf 1}.] {\bf {The base $T_0$.}}

        Let $T_0$ be a transitive tournament with vertex set $V(G)=[n]$, where all arcs are oriented as $i\to j$ for $i<j$. We call $T_0$ the {\it base} of $T_i$. Let $T_0[G]$ be the sub-digraph of $T_0$ such that the underlying graph of $T_0[G]$ is isomorphic to the given graph $G$.
        Define the total ordering $\succ$ on $E(T_0[G])$ as follows: for arcs $(a,b)$ and $(x,y)$ in $E(T_0[G])$, we define $(x,y)\succ (a,b)$ if and only if either $x+y>a+b$ or $x+y=a+b$ and $x>a$.

        \item[{\bf 2}.] {\bf {Gluing $F_i^{\dagger \bullet \bullet}$ on edges of $G$}.}

        For each arc $e=(a,b)\in E(T_0[G])$, attach a distinct copy of the rooted digraph $F_i^{\dagger \bullet \bullet}$ (say with roots $z_i,w_i$) to $V(e)$ such that $z_i$ is identified with $a$ and $w_i$ is identified with $b$.

        \item[{\bf 3}.] {\bf {Adding arcs inside $V_e$}.}

       Note that the rooted digraph $F_i^{\dagger \bullet \bullet}$ is obtained from two copies of $\mathop{F_i^{\bullet \bullet}}$,
       which are denoted as $\mathop{F_i^{\bullet \bullet}}\limits^{\longleftarrow}$ and $\mathop{F_i^{\bullet \bullet}}\limits^{\longrightarrow}$.
        For each arc $e=(a,b)\in E(T_0[G])$ and the copy of the rooted graph $F_i^{\dagger \bullet \bullet}$ glued along with this arc $e$,
        we denote the copy of $\mathop{F_i^{\bullet \bullet}}\limits^{\longleftarrow}$ in this $F_i^{\dagger \bullet \bullet}$  as $\mathop{F_i^{e}}\limits^{\longleftarrow}$ and the copy of $\mathop{F_i^{\bullet \bullet}}\limits^{\longrightarrow}$ as $\mathop{F_i^{e}}\limits^{\longrightarrow}$, respectively.
        Add all arcs $(x,y)$ with $x\in V(\mathop{F_i^{e}}\limits^{\longleftarrow})\setminus \{a,b\}$ and $y\in V(\mathop{F_i^{e}}\limits^{\longrightarrow})\setminus \{a,b\}$.
        Let $V_e$ be obtained from the vertex set of this copy of $F_i^{\dagger \bullet \bullet}$ by deleting the roots $a,b$. Let $V_0=\bigcup_{e\in E(T_0(G))} V_e$.

        \item[{\bf 4}.] {\bf {Adding arcs between $V_0$ and $V(T_0)$}.}

        For all $e=(a,b)\in E(T_0[G])$ and $x\in V(T_0)\backslash\{a,b\}$, add arcs $(x,v)$ for all $v\in V_{e}$.

        \item[{\bf 5}.] {\bf {Adding arcs inside $V_0$.}}

        For arcs $e_1,e_2\in E(T_0[G])$ with $e_1\succ e_2$, add all arcs $(x,y)$ with $x\in V_{e_1}$ and $y\in V_{e_2}$.
    \end{itemize}

\begin{figure}[htbp]
\centering
\includegraphics[width=1\textwidth]{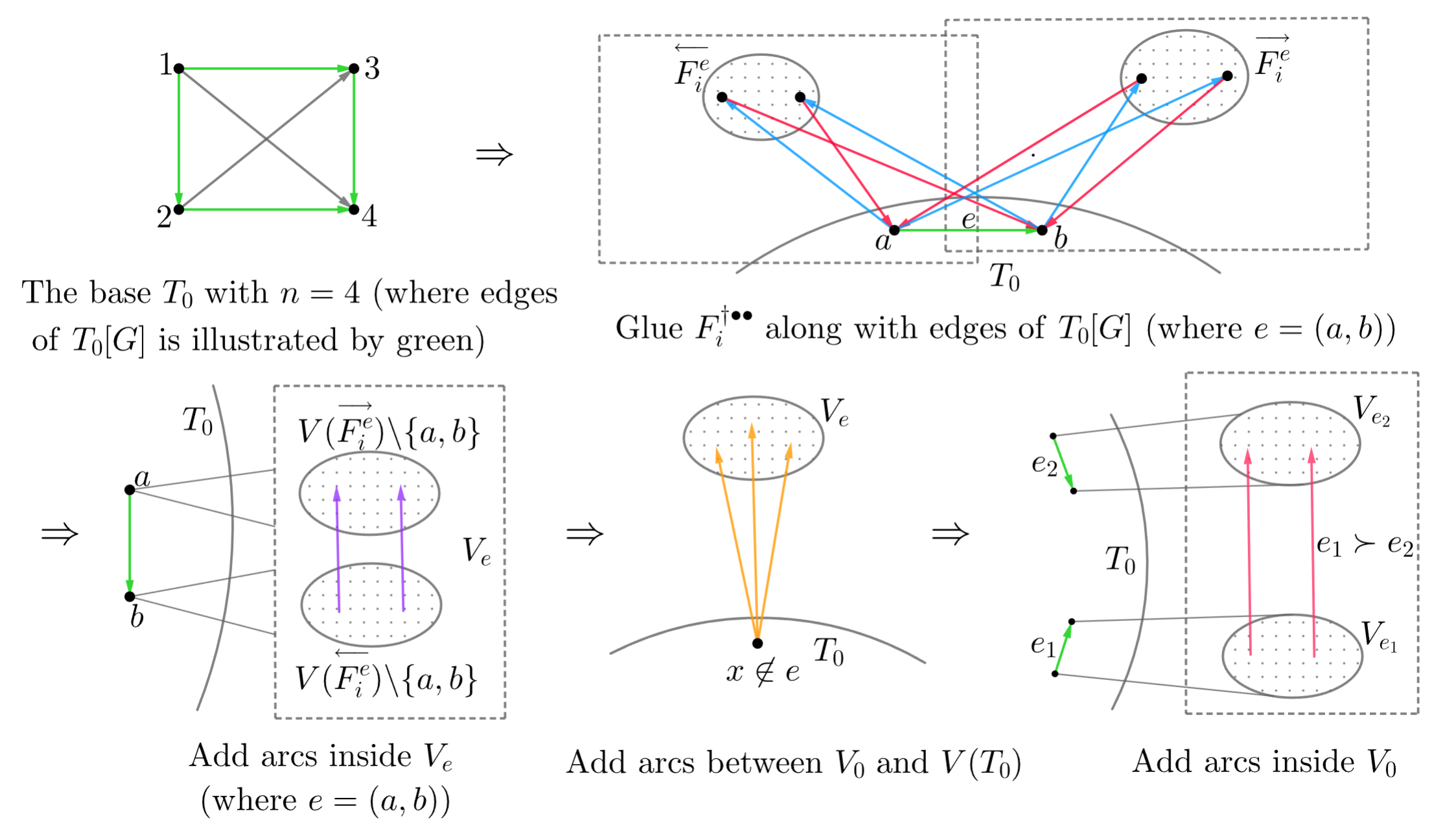}
\caption{Steps for constructing the tournament $T_i$}
\label{steps}
\end{figure}

\indent We now construct our target tournament $T_G^\bigstar$ by using the tournaments $T_i$ defined above.\footnote{Note that the construction of $T_i$ is related to a given graph $G$ but we just use the notation $T_i$ in this section for convenience without causing confusion.}
First, define an auxiliary tournament $T^*$ with the vertex set $\{i\ast k|~i\in [s] \mbox{ and } k\in [r_i] \mbox{ for each fixed }i\}$.
The arc set of $T^*$ is defined as follows: For two vertices $i_1\ast k_1$ and $i_2\ast k_2$ in $V(T^*)$, define $i_1\ast k_1\rightarrow i_2\ast k_2$ if and only if either $i_1<i_2$ or $i_1=i_2$ and $k_1<k_2$.
For any $i \in [s]$, take $r_i$ disjoint copies of $T_i$ and denote them as $T_{i*1},...,T_{i*r_i}$.
The target tournament $T_G^\bigstar$ is obtained by taking disjoint tournaments $T_{i*k}$, where $i\in [s]$ and $k\in [r_i]$, together, and for $u\in V(T_{i_1*k_1})$ and $v\in V(T_{i_2*k_2})$, adding the arc $(u,v)$ if and only if $(i_1\ast k_1,i_2\ast k_2)$ is an arc in $T^*$.
Note that $V(T_G^\bigstar)=\bigcup_{i=1}^s \bigcup_{k=1}^{r_i} V(T_{i*k})$.
In addition, note that every $T_{i*k}$ with $k \in [r_i]$ is a copy of $T_i$,
so there exists a unique spanning sub-digraph which is isomorphic to $T_0[G]$ in the base of $T_{i*k}$ .
We denote this digraph by $T_{i*k}[G]$.

Note that by the construction of $T_G^{\bigstar}$, it holds that ${\rm Hom}(F_i^{\bullet \bullet},T_G^{\bigstar})\neq \emptyset$ for every $i \in [s]$. We now prove a useful property of the tournament $T_G^\bigstar$, which shows that for any fixed $i\in [s]$,
every homomorphism $\phi\in {\rm Hom}(F_i^{\bullet \bullet },T_G^{\bigstar})$ maps the rooted digraph $F_i^{\bullet \bullet }$ (with roots $z_i,w_i$) into some $T_{j*k}$ for $j\ast k\in \bigcup_{i=1}^s\{i\ast 1,...,i\ast r_i\}$.

Recall that $V(F_i^{\bullet \bullet})=V(F_0^i)\cup \{z_i,w_i\}$.
By Claim~\ref{homoT}, the homomorphism $\phi$ must be an injection and thus $T_G^{\bigstar}[\phi(V(F_i^{\bullet \bullet})\setminus \{z_i,w_i\})]=T_G^{\bigstar}[\phi(V(F_0^i))]$ is a copy of $F_0^i$.
This implies that the tournament $T_G^{\bigstar}[\phi(V(F_0^i))]$ satisfies the conditions \textbf{(I)},\textbf{(II)} and \textbf{(III)} in Lemma~\ref{constructionF}.
Moreover, it holds that $d_{F_0^i}^+(z_i)=d_{T_G^{\bigstar}[\phi(V(F_0^i))]}^+(\phi(z_i))$ and  $d_{F_0^i}^+(w_i)=d_{T_G^{\bigstar}[\phi(V(F_0^i))]}^+(\phi(w_i))$.
As a consequence, we have ${\rm max}\{\Delta^+(T_G^{\bigstar}[\phi(V(F_i^{\bullet \bullet}))]), \Delta^-(T_G^{\bigstar}[\phi(V(F_i^{\bullet \bullet }))])\} \leq \frac{5m}{6}.$
\begin{lem}\label{oneT}
    For any $\phi\in $ {\rm Hom}$(F_i^{\bullet \bullet },T_G^{\bigstar})$ and for any vertices $u,v\in V(F_i^{\bullet \bullet })$, there are no two distinct tournaments $T_{j_1*k_1}$ and $T_{j_2*k_2}$ where $\phi(u)\in T_{j_1*k_1}$ and $\phi(v)\in T_{j_2*k_2}$.
\end{lem}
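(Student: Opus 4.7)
The plan is to argue by contradiction. Suppose $\phi$ sends vertices of $F_i^{\bullet\bullet}$ into two or more distinct components $T_{j*k}$ of $T_G^\bigstar$. Among all components $T_{j*k}$ hit by $\phi$, let $T_{j_1*k_1}$ be the one that is minimal with respect to the ordering on $T^\ast$, and define $A := \phi^{-1}(V(T_{j_1*k_1}))$ and $B := V(F_i^{\bullet\bullet})\setminus A$; by assumption both $A$ and $B$ are nonempty. By the construction of $T_G^\bigstar$, every arc in $T_G^\bigstar$ going from $V(T_{j_1*k_1})$ to any other component $V(T_{j*k})$ hit by $\phi$ is oriented outward. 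Combined with the injectivity of $\phi$ from Claim~\ref{homoT} and the fact that $T_G^\bigstar$ is a tournament (so $\phi(b)\to\phi(a)$ is not an arc whenever $\phi(a)\to\phi(b)$ is), this forces the following cut property: for any $a\in A$ and $b\in B$ that are adjacent in $F_i^{\bullet\bullet}$, the arc between them must be oriented $a\to b$.

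Next I restrict the cut to the base tournament by setting $A_0 := A\cap V(F_0^i)$ and $B_0 := B\cap V(F_0^i)$. Since $F_0^i$ is itself a tournament, any two vertices of $F_0^i$ are adjacent in $F_i^{\bullet\bullet}$, so the cut property forces $A_0\to B_0$ to be a complete bipartite digraph inside $F_0^i$. The argument then splits into two cases. In the main case where both $A_0$ and $B_0$ are nonempty, condition~\textbf{(II)} of Lemma~\ref{constructionF} prevents both sides from having size at least $\sqrt{m}$; without loss of generality $|B_0|<\sqrt{m}$, so $|A_0|>m-\sqrt{m}$, and then any vertex of $B_0$ has in-degree at least $m-\sqrt{m}>2m/3$ in $F_0^i$, contradicting condition~\textbf{(I)}. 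In the degenerate case where, say, $A_0=\emptyset$, one has $A\subseteq\{z_i,w_i\}$ and $B_0=V(F_0^i)$, so $A$ contains at least one root; if $z_i\in A$, pick any $b\in(k_i,m]\subseteq B_0$, which is nonempty since $k_i<5m/6<m$, to obtain an arc $b\to z_i$ going from $B$ to $A$, a contradiction, and if $w_i\in A$, pick any $b\in[k_i]\subseteq B_0$ to obtain $b\to w_i$, again a contradiction. The case $B_0=\emptyset$ is handled symmetrically.

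The main technical point to get right is the cut property: one must verify that the injectivity of $\phi$ together with the one-way orientation between different components of $T_G^\bigstar$ really implies that \emph{every} $F_i^{\bullet\bullet}$-arc between $A$ and $B$ goes $A\to B$, and that this does not depend on whether the pair involves a root. After that is set up, the argument reduces to combining the complete-bipartite obstruction from condition~\textbf{(II)} with the maximum in/out-degree bound from condition~\textbf{(I)}, and using the explicit choice $k_i\in(\tfrac{2m}{3}+2,\tfrac{5m}{6})$ to guarantee that both $[k_i]$ and $(k_i,m]$ are nonempty so that the degenerate root-only cases also yield a contradiction. Property~\textbf{(III)} of $F_0$ is not needed here and will enter only in later lemmas.
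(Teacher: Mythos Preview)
Your proof is correct and follows essentially the same approach as the paper: pick the $T^*$-minimal component hit by $\phi$, use the one-way orientation between distinct components together with injectivity of $\phi$ to obtain a one-directional complete bipartite cut, and then combine conditions~\textbf{(I)} and~\textbf{(II)} on $F_0$ to reach a contradiction. The only cosmetic difference is that the paper works in the image and uses the derived bound $\max\{\Delta^+(F_i^{\bullet\bullet}),\Delta^-(F_i^{\bullet\bullet})\}<5m/6$ to cap both sides of the cut, thereby absorbing your degenerate root-only case into a single size estimate $|V(K_0)|+|V(K_1)|\le \tfrac{5m}{6}+\sqrt{m}+2<m+2$, whereas you handle that case by a direct inspection of the root--$F_0$ arcs; both organizations are valid.
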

\begin{proof}
Suppose not.
Let $\mathcal{T}=\{j\ast k\in \bigcup_{i=1}^s\{i\ast 1,...,i\ast r_i\}: ~\phi(V(F_i^{\bullet \bullet }))\cap V(T_{j*k})\neq \emptyset\}$.
Let $j_0=$min$\{j|j\ast k\in\mathcal{T}\}$ and for this $j_0$, let $k_0=$min$\{k|j_0\ast k\in \mathcal{T}\}$.
We define $K_0:=T_G^{\bigstar}[\phi(V(F_i^{\bullet \bullet }))\cap V(T_{j_0*k_0})]$ and $K_1:=T_G^{\bigstar}[\phi(V(F_i^{\bullet \bullet }))\cap V(T_G^{\bigstar}\backslash T_{j_0*k_0})]$.
For any $v_0\in V(K_0)$ and $v_1\in V(K_1)$, by the definition of $T_G^\bigstar$, we have that $(v_0,v_1)$ is an arc of $T_G^{\bigstar}$.
Since $V(K_1)$ is non-empty, both $|V(K_1)|$ and $|V(K_0)|$ are no more than $\frac{5m}{6}$ by the fact that ${\rm max}\{\Delta^+(T_G^{\bigstar}[\phi(V(F_i^{\bullet \bullet}))]), \Delta^-(T_G^{\bigstar}[\phi(V(F_i^{\bullet \bullet }))])\} \leq \frac{5m}{6}$.
It also holds that one of $|V(K_0)|$ and $|V(K_1)|$ is less than $\sqrt{m}+2$, as otherwise, there exists a complete bipartite digraph with the size of both parts larger than $\sqrt{m}$ in $T_G^{\bigstar}[\phi(V(F_0^i)]$, which contradicts the condition {\bf (II)} in Lemma \ref{constructionF}.
Thus, we have that $|V(K_0)|+|V(K_1)|\leq \frac{5m}{6}+\sqrt{m}+2$, which contradicts the fact that $|V(K_0)|+|V(K_1)|=|V(F_i^{\bullet \bullet})|=m+2$ for sufficiently large $m$. Hence, it holds that $V(K_1)= \emptyset$, implying that $\phi$ maps $F_i^{\bullet \bullet}$ into a unique $T_{j*k}$ for some $j\ast k\in \bigcup_{i=1}^s\{i\ast 1,...,i\ast r_i\}$.
\end{proof}

\subsubsection{Proof of Lemma \ref{graphon}}\label{subsec:main thm of Tstar}
In this subsection, we prove Lemma \ref{graphon}.
Recall Lemma \ref{oneT} that for any $\phi\in $ {\rm Hom}$(F_i^{\bullet \bullet },T_G^{\bigstar})$,
there exists a unique $j\ast k\in \bigcup_{i=1}^s\{i\ast 1,...,i\ast r_i\}$ such that $\phi(V(F_i^{\bullet \bullet})) \subseteq V(T_{j*k})$.
The following properties hold for $T_{j*k}$ and $\phi$, for which we state without giving detailed explanations:
\begin{itemize}
    \item $T_{j,k}[\phi(V(F_i^{\bullet \bullet})\setminus \{z_i,w_i\})]=T_{j*k}[\phi(V(F_0^i))]$ is a copy of $F_0^i$. Thus, The tournament $T_{j*k}[\phi(V(F_0^i))]$ satisfies the conditions \textbf{(I)},\textbf{(II)} and \textbf{(III)} of Lemma~\ref{constructionF}.

    \item $d_{F_0^i}^+(z_i)=d_{T_{j*k}[\phi(V(F_0^i))]}^+(\phi(z_i))$ and  $d_{F_0^i}^+(w_i)=d_{T_{j*k}[\phi(V(F_0^i))]}^+(\phi(w_i))$. Thus, it holds that ${\rm max}\{\Delta^+(T_{j*k}[\phi(V(F_i^{\bullet \bullet}))]), \Delta^-(T_{j*k}[\phi(V(F_i^{\bullet \bullet }))])\} \leq \frac{5m}{6}.$
\end{itemize}

We are now proving Lemma~\ref{graphon}, which is equivalent to the following statement.

\begin{lem}\label{graphon1}
For any graph $G$, positive integers $r_1,r_2,...,r_s$ and the sequence of digraphs $F_i^{\dagger \bullet \bullet}$'s depending on $r_1,r_2,...,r_s$, there exists a tournament $T_G^\bigstar$ with $V(T_G^\bigstar)=\bigcup_{i=1}^s \bigcup_{k=1}^{r_i} V(T_{i*k})$ such that the following holds.
For every $i\in [s]$, there exists a constant $a_i>0$ such that
\begin{align}\label{equ:T^star}
M_i(T_G^{\bigstar})[x,y]=
\left\{ \begin{aligned}
   &a_i,\quad \mbox{if } (x,y) \mbox{ or } (y,x) \mbox{ is an arc in } E(T_{i*k}[G]) \mbox{ for every } k\in [r_i].\\
&0,\quad \mbox{otherwise.}
\end{aligned}
\right.
\end{align}
\end{lem}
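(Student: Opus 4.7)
The plan is to compute $M_i(T_G^{\bigstar})[x,y]=t_{x,y}(F_i^{\dagger \bullet \bullet},T_G^{\bigstar})$ for arbitrary $x,y\in V(T_G^{\bigstar})$ and read off the claimed block structure. The first step is a decoupling: since $\mathop{F_i^{\bullet \bullet}}\limits^{\longleftarrow}$ and $\mathop{F_i^{\bullet \bullet}}\limits^{\longrightarrow}$ meet only at the two roots of $F_i^{\dagger \bullet \bullet}$ and share no arcs outside the roots, a conditional homomorphism of $F_i^{\dagger \bullet \bullet}$ factors as independent conditional homomorphisms of the two copies, giving
\[
\operatorname{hom}_{x,y}(F_i^{\dagger \bullet \bullet},T_G^{\bigstar}) \;=\; \operatorname{hom}_{x,y}(F_i^{\bullet \bullet},T_G^{\bigstar})\cdot \operatorname{hom}_{y,x}(F_i^{\bullet \bullet},T_G^{\bigstar}).
\]
Hence it suffices to characterize the pairs $(x,y)$ for which $\operatorname{hom}_{x,y}(F_i^{\bullet \bullet},T_G^{\bigstar})$ is nonzero and show the count is then a fixed constant depending only on $i$.

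Next, I will localize the image of any $\phi\in\operatorname{Hom}(F_i^{\bullet \bullet},T_G^{\bigstar})$. By Claim~\ref{homoT} the map $\phi$ is injective, and by Lemma~\ref{oneT} its image lies inside a single copy $T_{j*k}$. Within $T_{j*k}$ I partition the vertex set into \emph{homogeneous blocks}: the singletons $\{v\}$ for each base vertex $v\in V(T_0)$, and the two halves $V_e^{\longleftarrow},V_e^{\longrightarrow}$ (each of size $m$) for every pearl $V_e$ with $e\in E(T_0[G])$. Between any two such blocks, arcs are unidirectional, with the only exceptions being the mixed adjacencies between a pearl half $V_e^{*}$ and the two base roots $a,b$ of its attached $F_j^{\dagger \bullet \bullet}$. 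Condition~\textbf{(II)} of Lemma~\ref{constructionF} forbids a $\sqrt{m}\times\sqrt{m}$ complete bipartite digraph in $\phi(V(F_0^i))\cong F_0^i$, so at most one block can contain $\ge\sqrt{m}$ image vertices; condition~\textbf{(III)} combined with the acyclicity of the transitive tournament $T_0$ forces $|\phi(V(F_0^i))\cap V(T_0)|<\tfrac{2m}{13}-\sqrt{m}$. A short direction-splitting argument on the residual vertices (partitioning them by whether the dominant block lies above or below them in the unidirectional ordering) then shows that a unique pearl half, WLOG $V_e^{\longleftarrow}$, captures at least $m-O(\sqrt{m})$ of the image.

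The most delicate step is strengthening this near-total concentration into the exact equality $\phi(V(F_0^i))=V_e^{\longleftarrow}$. I plan to achieve this through the strong root constraints: $\phi(z_i)$ must have exactly $k_i>\tfrac{2m}{3}$ out-neighbours and $m-k_i$ in-neighbours inside $\phi(V(F_0^i))$. A case-check over every possible location of $\phi(z_i)$ in $T_{j*k}$ eliminates all candidates except the pearl root $a$ of $e$: a base vertex $v\in V(T_0)\setminus\{a,b\}$ or a vertex of a pearl comparable to $e$ under $\succ$ is either fully out-adjacent or fully in-adjacent to $V_e^{\longleftarrow}$, while a root of a pearl of mismatched type $F_j^{\dagger \bullet \bullet}$ with $j\neq i$ would give $k_j\neq k_i$ out-neighbours. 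This simultaneously rules out leakage vertices outside $V_e^{\longleftarrow}$ and forces $j=i$. Applying the same reasoning to $\phi(w_i)$ pins it to $b$, and the degree asymmetry $k_i\neq m-k_i$ together with Claim~\ref{autoroot} fixes which root plays which role.

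Once the image is exactly $V_e^{\longleftarrow}\cup\{a,b\}$ for some arc $e=(a,b)\in E(T_{i*k}[G])$, the number of valid $\phi$ equals the number $c_i$ of automorphisms of the rooted digraph $F_i^{\bullet \bullet}$ fixing both roots; the analogous right-half embedding into $V_e^{\longrightarrow}\cup\{a,b\}$ accounts for arcs $e=(y,x)$. Hence $\operatorname{hom}_{x,y}(F_i^{\bullet \bullet},T_G^{\bigstar})=c_i$ exactly when $(x,y)$ or $(y,x)\in E(T_{i*k}[G])$ for some $k\in[r_i]$, and $0$ otherwise; multiplying the two halves and dividing by $|V(T_G^{\bigstar})|^{2m}$ yields $M_i(T_G^{\bigstar})[x,y]=a_i:=c_i^{2}/|V(T_G^{\bigstar})|^{2m}$ on edges of $\bigcup_{k\in[r_i]}T_{i*k}[G]$ and $0$ elsewhere, exactly the form asserted in~\eqref{equ:T^star}. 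The main obstacle is the upgrade from ``near-total concentration'' to ``exact containment'' in a single pearl half, which is what makes the root-based case analysis unavoidable; the other ingredients follow routinely from Claims~\ref{homoT}--\ref{neq} and Lemmas~\ref{constructionF} and~\ref{oneT}.
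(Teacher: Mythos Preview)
Your overall architecture matches the paper's: factor via the product formula $\operatorname{hom}_{x,y}(F_i^{\dagger\bullet\bullet},\cdot)=\operatorname{hom}_{x,y}(F_i^{\bullet\bullet},\cdot)\cdot\operatorname{hom}_{y,x}(F_i^{\bullet\bullet},\cdot)$, localize to one $T_{j*k}$ via Lemma~\ref{oneT}, confine the image to a single pearl half, then read off $j=i$ and the root placement. The decoupling and the final bookkeeping are correct. The genuine gap is in your ``upgrade from near-total to exact'' step. You propose to run the root-degree case analysis while $O(\sqrt{m})$ leakage is still present, but that error swamps the distinctions you need: (a)~the case $\phi(z_i)\in V_e^{\longleftarrow}$ is not on your list, and there its out-degree into $\phi(V(F_0^i))$ is at most $\tfrac{2m}{3}+O(\sqrt{m})$ by condition~\textbf{(I)}, which does not contradict $k_i>\tfrac{2m}{3}+2$; (b)~if $\phi(z_i)=a$ but $j\neq i$, that out-degree is $k_j\pm O(\sqrt{m})$, and since the construction only guarantees $|k_i-k_j|\geq 2$ you cannot separate $k_i$ from $k_j$. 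Pinning the roots to $\{a,b\}$ does not by itself kill the leakage either (for instance a stray image vertex at some $c\in V(T_0)$ with $a<c<b$ is compatible with both root adjacencies).

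The paper avoids this by never settling for an approximate stage: it uses the bound $\Delta^{\pm}\bigl(T_{j*k}[\phi(V(F_i^{\bullet\bullet}))]\bigr)\leq\tfrac{5m}{6}$ (a consequence of condition~\textbf{(I)}) at every narrowing step---to a single $V_e\cup\{a_1,a_2\}$ in Claim~\ref{inVe}, and then to a single half of $V_e$ in Claim~\ref{key claim of 4.1}---obtaining the \emph{exact} equality $\phi(V(F_i^{\bullet\bullet}))=V(\mathop{F_j^{e}}\limits^{\longleftarrow})$ or $V(\mathop{F_j^{e}}\limits^{\longrightarrow})$ before any root comparison. With zero leakage the out-degree of $\phi(z_i)$ into $\phi(V(F_0^i))$ is an exact quantity, and Claims~\ref{autoroot} and~\ref{neq} (which are precisely your root-degree comparisons) go through with the gap $|k_i-k_j|\geq 2$. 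Your argument is repaired the same way: first invoke condition~\textbf{(I)} on each putative leakage vertex of $\phi(V(F_0^i))$ outside $V_e^{\longleftarrow}\cup\{a,b\}$ (its out- or in-degree into the dominant half already exceeds $\tfrac{2m}{3}$), then on $a,b$ themselves, to force $\phi(V(F_0^i))=V_e^{\longleftarrow}$ exactly; only then run the case analysis on $\phi(z_i)$.
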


\begin{proof}[Proof of Lemma \ref{graphon1}]
To begin, we point out that by the definition of $F_i^{\dagger \bullet \bullet}$, for any tournament $T$ and $x,y\in V(T)$, it holds that
\begin{equation}\label{symme}
\hom_{x,y}(F_i^{\dagger \bullet \bullet},T) = \hom_{x,y}(F_i^{\bullet \bullet },T)\cdot \hom_{y,x}(F_i^{\bullet \bullet },T).
\end{equation}
So it suffices to consider $x=\phi(z_i)$ and $y=\phi(w_i)$ for which there exist some $\phi\in$Hom$(F_i^{\bullet \bullet},T_G^{\bigstar})$.
By Lemma \ref{oneT}, there exists a unique $j\ast k\in \bigcup_{i=1}^s\{i\ast 1,...,i\ast r_i\}$ such that $\phi(V(F_i^{\bullet \bullet}))\subseteq V(T_{j*k})$.
Now, let us look at this tournament $T_{j*k}$.
For convenience, we denote the base of $T_{j*k}$ as $T_0$ in this subsection. Moreover, let $U_1=T_{j*k}[\phi(V(F_i^{\bullet \bullet}))\cap V(T_0)]$ and $U_2=T_{j*k}[\phi(V(F_i^{\bullet \bullet}))\cap V_0]$ be two induced sub-digraphs of $T_{j*k}$. It holds that $|V(U_1)|+|V(U_2)|=|\phi(V(F_i^{\bullet \bullet}))|=m+2$. Recall that if $e=(a,b)\in E(T_0[G]) $, then $V_e=V_{(a,b)}$ represents the vertex set $(V(\mathop{F_j^{e}}\limits^{\longleftarrow})\cup V(\mathop{F_j^{e}}\limits^{\longrightarrow}))\setminus\{a,b\}$; moreover, $V_0=\bigcup_{e\in E(T_0[G])} V_e$.
We first show the following claims hold.
\begin{claim}\label{U_2}
$U_2$ cannot contain vertices from distinct $V_{e}$'s.
\end{claim}

\begin{proof}[Proof of Claim~\ref{U_2}]
    Let $\mathcal{E}_{F_i}=\{e\in E(T_0[G]): V_e\cap \phi(V(F_i^{\bullet \bullet }))\neq \emptyset\}$.
    Let $e_0$ have the highest order in $\mathcal{E}_{F_i}$ under the total ordering $\succ$.
    Further, let $K_{e_0}:=T_{j*k}[\phi (V(F_i^{\bullet \bullet }))\cap V_{e_0}]$ and $K_{e_1}:=T_{j*k}[\phi(V(F_i^{\bullet \bullet }))\cap (\bigcup_{e\in \mathcal{E}_{F_i},e\neq e_0}V_e)]$ be two induced sub-digraphs of $T_{j,k}$ such that $V(U_2)=V(K_{e_0})\cup V(K_{e_1})$.
    By the definition of $T_{j*k}$, it follows that if $u\in V(K_{e_0})$ and $v\in V(K_{e_1})$, then $(u,v)$ is an arc in $E(T_{j*k})$.
    Assume for a contradiction that $V(U_2)$ contain vertices from distinct $V_e$'s.
    Then both $V(K_{e_0})$ and $V(K_{e_1})$ are non-empty. Since ${\rm max}\{\Delta^+(T_{j*k}[\phi(V(F_i^{\bullet \bullet}))]), \Delta^-(T_{j*k}[\phi(V(F_i^{\bullet \bullet }))])\} \leq \frac{5m}{6}$, both $|V(K_{e_0})|$ and $|V(K_{e_1})|$ are no more than $\frac{5m}{6}$.
    According to the condition {\bf(II)} of Lemma \ref{constructionF}, it also follows that $|V(K_{e_0})|<\sqrt{m}+2$ or $|V(K_{e_1})|<\sqrt{m}+2$. Consequently, we have $|V(U_2)| < \frac{5m}{6}+\sqrt{m}+2$, which implies $|V(U_1)|\geq \frac{m}{6}-\sqrt{m}-1$ and thus $|V(U_1)\cap \phi(V(F_0^i))|\geq \frac{2m}{13}-\sqrt{m}$.
   By Condition {\bf(III)} in Lemma \ref{constructionF}, the tournament induced by $V(U_1)\cap \phi(V(F_0^i))$ in $T_{j*k}[\phi(V(F_0^i))]$ must contain a directed cycle.
   However, $U_1\subseteq T_0$ is transitive and thus acyclic, a contradiction.
 This proves that the vertices of $U_2$ belong to at most one $V_e$ among all $e\in E(T_0[G])$.
\end{proof}

\begin{claim}\label{inVe}
Let $\phi\in {\rm Hom}(F_i^{\bullet \bullet},T_G^{\bigstar})$ such that $\phi(V(F_i^{\bullet \bullet}))\subseteq V(T_{j*k})$. It holds that $\phi(V(F_i^{\bullet \bullet}))\subseteq V_e\cup \{a_1,a_2\}$ for some $e=(a_1,a_2) \in E(T_0[G]).$
\end{claim}

\begin{proof}[Proof of Claim \ref{inVe}]
By Claim \ref{U_2}, we have that $V(U_2)$ is contained in $V_e$ for some $e=(a_1,a_2) \in E(T_0[G])$. If $|V(U_2)|< \frac{5m}{6}+1$, then $|V(U_1)|\geq \frac{m}{6}> \frac{2m}{13}-\sqrt{m}+2$ for sufficiently large $m$. This implies that $|V(U_1)\cap \phi(V(F_0^i))|\geq \frac{2m}{13}-\sqrt{m}$.
By Condition {\bf(III)} in Lemma \ref{constructionF}, the tournament induced by $V(U_1)\cap \phi(V(F_0^i))$ in $T_{j*k}[\phi(V(F_0^i))]$ must contain a directed cycle.
However, by the definition of $T_{j*k}$, $U_1\subseteq T_0$ is transitive and thus acyclic, which leads to the contradiction. Therefore, we have that $|V(U_2)|\geq \frac{5m}{6}+1$.
We then prove that $V(U_1)\backslash\{a_1,a_2\} =\emptyset$. Again by the definition of $T_{j*k}$, for any vertex $u\in V(U_1)\backslash\{a_1,a_2\}$ and $v \in V(U_2)$, $(u,v)$ is an arc in $E(T_{j*k})$. If $V(U_1)\backslash\{a_1,a_2\} \neq \emptyset$, then there exists some $u \in V(U_1)\backslash\{a_1,a_2\}$ such that $d_{U_2}^+(u)\geq \frac{5m}{6}+1$, which contradicts the fact that $\Delta^+(T_{j*k}[\phi(V(F_i^{\bullet \bullet}))])\leq \frac{5m}{6}$.
Thus, $V(U_1)\backslash\{a_1,a_2\} =\emptyset$ and we conclude that $\phi(V(F_i^{\bullet \bullet}))\subseteq V_e\cup \{a_1,a_2\}$.
\end{proof}

We proceed to prove the following key claim.
\begin{claim}\label{key claim of 4.1}
Let $\phi\in {\rm Hom}(F_i^{\bullet \bullet},T_G^{\bigstar})$ such that $\phi(V(F_i^{\bullet \bullet}))\subseteq V(T_{j*k})$. It follows that $i=j$. Moreover, if $x,y\in V(T_0)$ and either $(x,y)$ or $(y,x)$ is an arc of $T_0[G]$, then ${\rm hom}_{x,y}(F_i^{\bullet \bullet},T_{i*k})\\
=b_i$ for some constant $b_i>0,$ otherwise, ${\rm hom}_{x,y}(F_i^{\bullet \bullet},T_{i*k})=0$.
\end{claim}

\begin{proof}[Proof of Claim~\ref{key claim of 4.1}]

By Claim~\ref{inVe}, we have that $\phi(V(F_i^{\bullet \bullet}))\subseteq V_e\cup \{a_1,a_2\}$ for some $e=(a_1,a_2) \in E(T_0[G])$, which implies that $|V(U_2)|\geq m$.
By the definition of $T_{j*k}$, if $u \in V(U_2)\cap V(\mathop{F_j^{e}}\limits^{\longleftarrow})$ and $v \in V(U_2)\cap V(\mathop{F_j^{e}}\limits^{\longrightarrow})$, then $(u,v)$ is an arc in $E(T_{j*k})$.
Therefore, if both $V(U_2)\cap V(\mathop{F_j^{e}}\limits^{\longleftarrow})$ and $V(U_2)\cap V(\mathop{F_j^{e}}\limits^{\longrightarrow})$ are non-empty, then we have that $|V(U_2)\cap V(\mathop{F_j^{e}}\limits^{\longleftarrow})|\leq \frac{5m}{6}$ and $|V(U_2)\cap V(\mathop{F_j^{e}}\limits^{\longrightarrow})|\leq \frac{5m}{6}$,
where we use the fact that ${\rm max}\{\Delta^+(T_{j*k}[\phi(V(F_i^{\bullet \bullet}))]),$ $ \Delta^-(T_{j*k}[\phi(V(F_i^{\bullet \bullet }))])\} \leq \frac{5m}{6}.$
Moreover, if both of $|V(U_2)\cap V(\mathop{F_j^{e}}\limits^{\longrightarrow})|$ and $|V(U_2)\cap V(\mathop{F_j^{e}}\limits^{\longleftarrow})|$ are larger than $\sqrt{m}+2$, then there exists a complete bipartite digraph with the size of both parts larger than $\sqrt{m}$ in $T_{j*k}[\phi(F_0^i)]$, which contradicts Condition {\bf(II)} in Lemma \ref{constructionF}.
Hence, $|V(U_2)\cap V(\mathop{F_j^{e}}\limits^{\longleftarrow})|+ |V(U_2)\cap V(\mathop{F_j^{e}}\limits^{\longrightarrow})| \leq \frac{5m}{6}+\sqrt{m}+2$, which contradicts the fact that $|V(U_2)|\geq m$ for sufficiently large $m$.
Thus, one of the $V(U_2)\cap V(\mathop{F_j^{e}}\limits^{\longleftarrow})$ and $V(U_2)\cap V(\mathop{F_j^{e}}\limits^{\longrightarrow})$ is empty, which implies that $\phi(V(F_i^{\bullet \bullet}))$ equals to $V(\mathop{F_j^{e}}\limits^{\longleftarrow})$ or $V(\mathop{F_j^{e}}\limits^{\longrightarrow})$.
Note that $\mathop{F_j^{e}}\limits^{\longleftarrow}$ and $\mathop{F_j^{e}}\limits^{\longrightarrow}$ are copies of $F_j^{\bullet \bullet}$. If $j\neq i$, then ${\rm Hom}(F_i^{\bullet \bullet},T_{j*k})=\emptyset$ by Claim \ref{neq}.
Thus we have that $i=j$.

 Since $\phi(V(F_i^{\bullet \bullet}))$ equals to $V(\mathop{F_i^{e}}\limits^{\longleftarrow})$ or $V(\mathop{F_i^{e}}\limits^{\longrightarrow})$, if at least one of $x=\phi(z_i)$ and $y=\phi(w_i)$ is not in $V(T_0)$, then $\phi$ maps at least one of the roots $z_i,w_i$ of $F_{i}^{\bullet \bullet}$ to a non-root vertex of $T_{i*k}[\phi(F_i^{\bullet \bullet })]$ in $V(\mathop{F_i^{e}}\limits^{\longleftarrow})\backslash\{a_1,a_2\}$ or in $V(\mathop{F_i^{e}}\limits^{\longrightarrow})\backslash\{a_1,a_2\}$, which leads to a contradiction to Claim \ref{autoroot}.
Therefore, $x,y\in V(T_0)$. By Claim \ref{inVe}, either $(x,y)$ or $(y,x)$ is an arc of $T_0[G]$.
Without loss of generality, we may assume $(x,y)=e\in E(T_0[G])$.
Since $\mathop{F_i^{e}}\limits^{\longleftarrow}$ is a copy of $F_i^{\bullet \bullet}$, there exists an isomorphism $\psi$ from $F_i^{\bullet \bullet}$ to $\mathop{F_i^{e}}\limits^{\longleftarrow}$ under $\psi(z_i)=x$ and $\psi(w_i)=y$. Thus, we have that ${\rm hom}_{x,y}(F_i^{\bullet \bullet},\mathop{F_i^{e}}\limits^{\longleftarrow})>0$.
Note that by symmetry, we also have that ${\rm hom}_{x,y}(F_i^{\bullet \bullet},\mathop{F_i^{e}}\limits^{\longleftarrow})={\rm hom}_{y,x}(F_i^{\bullet \bullet},\mathop{F_i^{e}}\limits^{\longrightarrow})$.
This implies that if either $e=(x,y)$ or $e=(y,x)$ is an arc of $T_0[G]$, then ${\rm hom}_{x,y}(F_i^{\bullet \bullet},T_{i*k})={\rm hom}_{x,y}(F_i^{\bullet \bullet},\mathop{F_i^{e}}\limits^{\longleftarrow})={\rm hom}_{y,x}(F_i^{\bullet \bullet},\mathop{F_i^{e}}\limits^{\longrightarrow})={\rm hom}_{y,x}(F_i^{\bullet \bullet},T_{i*k})=b_i>0$ for some constant $b_i$, completing the proof of Claim~\ref{key claim of 4.1}.
\end{proof}
Fix $i \in [s]$. Let $x,y\in V(T_G^{\bigstar})$ such that $(x,y)$ or $(y,x)$ is an arc in $E(T_{i*k}[G])$ for $1\leq k \leq r_i$. By Lemma~\ref{oneT}, Claim~\ref{key claim of 4.1}, the equality~\eqref{symme} and the definition of $F_i^{\dagger \bullet \bullet}$, along with the fact $\hom_{x,y}(F_i^{\bullet \bullet},T_{i*k})=\hom_{y,x}(F_i^{\bullet \bullet},T_{i*k})$ we obtain above, it follows that $\hom_{x,y}(F_i^{\dagger \bullet \bullet},T_G^{\bigstar})=\hom_{x,y}(F_i^{\dagger \bullet \bullet},T_{i*k}) =\hom_{x,y}(F_i^{\bullet \bullet},T_{i*k})^2=b_i^2>0$ for some constant $b_i$. For the other cases of $x,y\in V(T_G^{\bigstar})$, we have that $\hom_{x,y}(F_i^{\dagger \bullet \bullet},T_G^{\bigstar})=0$ again by Lemma~\ref{oneT}, Claim~\ref{key claim of 4.1} and the equality~\eqref{symme}.

Finally, we can derive that if $(x,y)$ or $(y,x)$ is an arc in $E(T_{i*k}[G])$ for $1\leq k\leq r_i$, then $$t_{x,y}(F_i^{\dagger \bullet \bullet},T_G^{\bigstar})=\frac{{\rm hom}_{x,y}(F_i^{\dagger \bullet \bullet},T_G^{\bigstar})}{|V(T_G^{\bigstar})|^{2m}} =\frac{{\rm hom}_{x,y}(F_i^{\dagger \bullet \bullet},T_{i*k})}{|V(T_G^{\bigstar})|^{2m}} =\frac{b_i^2}{|V(T_G^{\bigstar})|^{2m}}:=a_i>0$$
for some constant $a_i$;
for the other cases, $t_{x,y}(F_i^{\dagger \bullet \bullet},T_G^{\bigstar})=0.$ By the definition of $M_i(T_G^{\bigstar})$, we have that $M_i(T_G^{\bigstar})[x,y]=t_{x,y}(F_i^{\dagger \bullet \bullet},T_G^{\bigstar})$ for $x,y \in V(T_G^{\bigstar})$, completing the proof of Lemma~\ref{graphon1}.
\end{proof}

\subsection{Proof of Main Lemma \ref{copies}}\label{subsec: prove Main Lemma}

In this section, we will complete the proof of item \ref{item:1} in Lemma~\ref{copies} and thus complete the proof of Theorem~\ref{ourwork}.
Note that by Lemma~\ref{equ:t(C_ell)}, we have that for every tournament $T$ and $i\in [s]$, it holds that
\begin{equation}\label{neck}
t(D_{\ell,F_i^{\dagger \bullet \bullet}},T)=t(C_\ell, M_i(T)).
\end{equation}

To prove Lemma~\ref{copies}, we need to construct a sequence of host tournaments.
For that, we need one last ingredient -- a class of triangle-free graphs $A(k,2)$ constructed by Alon in \cite{Alon}.
We remark that the graphs $A(k,2)$ are known to be $(n,d,\lambda)$-graphs.

\begin{thm}[Alon \cite{Alon}]
There exists an infinite sequence of positive integers $k$ such that the following holds.
For every given integer $k$ in this sequence, there exists a triangle-free $n$-vertex $d$-regular graph $A(k,2)$ with the second largest eigenvalue (in absolute value) of the adjacency matrix of $A(k,2)$ being $\lambda$, such that $n=\Theta(2^{3k})$, $d=\Theta(2^{2k})$ and $\lambda=\Theta(2^k)$.\footnote{Here, the constants in $\Theta$ are absolute constants.}
\end{thm}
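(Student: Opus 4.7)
The plan is to construct $A(k,2)$ explicitly as a Cayley graph on the additive group $F^3$, where $F = GF(2^k)$. First, I would set the vertex set to $V = F^3$, so that $n = |V| = 2^{3k}$. Next, I would choose a symmetric connection set $S \subseteq F^3 \setminus \{0\}$ of size $|S| = \Theta(2^{2k})$ and declare $x \sim y$ iff $x - y \in S$; symmetry $S = -S$ is automatic since $\mathrm{char}(F) = 2$, and the resulting Cayley graph is immediately $|S|$-regular, delivering $d = \Theta(2^{2k})$. To realize such an $S$ concretely, I would take it to be (essentially) the image of a low-degree polynomial map $\phi \colon F^2 \to F^3$ of the form $\phi(a, b) = (a, b, P(a, b))$ for a carefully chosen polynomial $P$; this keeps $|S| = \Theta(q^2)$ and sets the stage for the algebraic tools used below.

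For triangle-freeness, the key requirement will be that $S$ is \emph{sum-free}, i.e.\ $S \cap (S + S) = \emptyset$. Indeed, a triangle $x, y, z$ in the Cayley graph yields differences $x - y,\ y - z,\ x - z \in S$ with $(x-y) + (y-z) = x - z$, contradicting sum-freeness. Verifying sum-freeness of the algebraic set $S$ reduces to showing that a particular polynomial system over $F$ has no nontrivial solutions, which one checks directly using characteristic-$2$ arithmetic and the specific structure of $P$.

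The hard part will be the eigenvalue bound $\lambda = \Theta(2^k)$. Since the graph is Cayley over the abelian group $F^3$, its adjacency eigenvalues are exactly the character sums $\sum_{s \in S} \chi(s)$ over characters $\chi$ of $F^3$; the trivial character produces the principal eigenvalue $d = |S|$, and every other eigenvalue is a nontrivial character sum. Parameterizing $S$ through $\phi$ rewrites each such sum as a two-variable exponential sum over $F$ of the form $\sum_{(a,b) \in F^2} \chi_1(a)\, \chi_2(b)\, \chi_3(P(a,b))$. Applying the Weil--Deligne bounds for exponential sums over smooth varieties of bounded degree then yields the required square-root saving $\bigl|\sum_{s \in S} \chi(s)\bigr| = O(q) = O(2^k)$ for every nontrivial $\chi$, and the theorem follows by collecting the three inputs: a vertex set of size $2^{3k}$, a sum-free connection set of size $\Theta(2^{2k})$, and the Weil--Deligne estimate governing the non-principal eigenvalues.
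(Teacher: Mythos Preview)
The paper does not prove this theorem at all: it is quoted verbatim as a black-box input from Alon's 1994 paper and is immediately applied (with $G_k := A(k,2)$) to verify the limit computations \eqref{d8}--\eqref{d12}. There is therefore no ``paper's own proof'' to compare your sketch against.

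That said, your outline is in the right family of constructions---Cayley graph on an elementary abelian $2$-group, eigenvalues realized as character sums, square-root cancellation from Weil-type bounds---but it differs in its specifics from what Alon actually does, and as written it is not a proof. Alon's $A(k,2)$ is built inside $\mathbb{F}_{2^{3k}}$ using the multiplicative structure: the connection set is (essentially) a union of cosets related to the norm-one subgroup under the norm map $\mathbb{F}_{2^{3k}}^{*}\to\mathbb{F}_{2^{k}}^{*}$, triangle-freeness is deduced from a short linear-algebra/BCH-code argument about sums of seventh powers, and the eigenvalue bound comes from Weil's estimate for the relevant character (Kloosterman-type) sums. Your ``graph of a polynomial'' parameterization $S=\{(a,b,P(a,b))\}$ could in principle be pushed through, but the entire difficulty is hidden in the two places you wave at: the concrete choice of $P$ that makes $S$ sum-free over $\mathbb{F}_{2^k}$, and the verification that the resulting two-variable exponential sums are nondegenerate enough for Weil--Deligne to apply. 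Without those, the sketch is a plan rather than a proof. For the purposes of the present paper, simply citing Alon is both what the authors do and what is appropriate.
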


For each $k$ from the above theorem,
applying Lemma~\ref{graphon1} for the graph $G_k:=A(k,2)$ and $s$ given positive integers $r_1,...,r_s$,
there exists a tournament $T_{G_k}^\bigstar:=T_G^\bigstar$ for which \eqref{equ:T^star} holds.
By Lemma~\ref{graphon}, fix $i \in [s]$, there exists a constant $a_i>0$ such that $M_i(T_{G_k}^\bigstar) = a_i A( r_i G_k)$ up to removing all-zero columns and all-zero rows from $M_i(T_G^\bigstar)$. Thus, by Lemma~\ref{equ:t(C_ell)}, we have that
\begin{equation}
t(D_{\ell,F_i^{\dagger \bullet \bullet}},T_{G_k}^\bigstar)=t(C_\ell, M_i(T_{G_k}^\bigstar))=\frac{a_i^{\ell}r_i\hom(C_\ell,G_k)}{|V(T_{G_k}^{\bigstar})|^{\ell}}=\frac{a_i^{\ell}r_i(\sum_{r=1}^{|V(G_k)|}\lambda_r^\ell)}{|V(T_{G_k}^{\bigstar})|^{\ell}}
\end{equation}
holds for every $i\in [s]$, where $\lambda_r$'s are eigenvalues of the adjacency matrix of $G_k$ such that $|\lambda_1| \geq |\lambda_2|\geq...\geq |\lambda_{|V(G_k)|}|$.
For every positive integer $\ell$, by the definition of $G_k$, we have that $\sum_{r=1}^{|V(G_k)|}\lambda_r^{4\ell}=\lambda_1^{4\ell}+\sum_{r \geq 2}\lambda_r^{4\ell}=\Theta(2^{8k\ell})+O(2^{3k}\cdot 2^{4k\ell})=(1+o(1))\lambda_1^{4\ell}$, here $o(1)$ is when $k\rightarrow \infty$. This implies that
\begin{equation}\label{d8}
   x_{F_i}(T_{G_k}^\bigstar) = \frac{t(D_{8,F_i^{\dagger \bullet \bullet}},T_{G_k}^\bigstar)}{t(D_{4,F_i^{\dagger \bullet \bullet}},T_{G_k}^\bigstar)^2}=\frac{r_i\sum_{r}\lambda_r^8}{r_i^2(\sum_{r}\lambda_r^4)^2}=\frac{(1+o(1))r_i\lambda_1^8}{(1+o(1))r_i^2\lambda_1^8}\rightarrow \frac{1}{r_i}, \mbox{ as }k\rightarrow \infty
\end{equation}
and
\begin{equation}\label{d12}
   y_{F_i}(T_{G_k}^\bigstar) = \frac{t(D_{12,F_i^{\dagger \bullet \bullet}},T_{G_k}^\bigstar)}{t(D_{4,F_i^{\dagger \bullet \bullet}},T_{G_k}^\bigstar)^3}=\frac{r_i\sum_{r}\lambda_r^{12}}{r_i^3(\sum_{r}\lambda_r^4)^3}=\frac{(1+o(1))r_i\lambda_1^{12}}{(1+o(1))r_i^3\lambda_1^{12}}\rightarrow \frac{1}{r_i^2}, \mbox{ as }k\rightarrow \infty.
\end{equation}

\medskip

Now we are ready to prove Lemma~\ref{copies}.

\begin{proof}[Proof of Lemma \ref{copies}]
For any set $\{r_1, ..., r_s\}$ of positive integers, combining (\ref{d8}) and (\ref{d12}), it follows that
\begin{equation}\label{complete}
\left(x_{F_1}(T_{G_k}^\bigstar),y_{F_1}(T_{G_k}^\bigstar),...,x_{F_s}(T_{G_k}^\bigstar),y_{F_s}(T_{G_k}^\bigstar)\right)\rightarrow \left(\frac{1}{r_1}, \frac{1}{r_1^2},...,\frac{1}{r_s}, \frac{1}{r_s^2}\right), \mbox{ as } k \rightarrow \infty.
\end{equation}
This implies that $(\frac{1}{r_1}, \frac{1}{r_1^2},...,\frac{1}{r_s}, \frac{1}{r_s^2})\in \mathcal{D}_{\leq s}$, which completes the proof of item \ref{item:1} in Lemma~\ref{copies}. Recall item \ref{item:2} is proved in Appendix \ref{app:2}. Thus the proof of Lemma \ref{copies} is complete.
\end{proof}

\section*{Funding}
%This work was partially supported by National Key Research and Development Program of China 2023YFA1010201 and National Natural Science Foundation of China grant 12125106. Partially supported by NSF grants DMS-2404167 and DMS-2401414.
This work was supported by National Key Research and Development Program of China [2023YFA1010201 to H.C., Y.L. and J.M.], National Natural Science Foundation of China grant [12125106 to H.C., Y.L. and J.M.] and the National Science Foundation [DMS-2404167 and DMS-2401414 to F.W.]

\appendix

\section{Proof of item 2 in Main Lemma~\ref{copies}}
\label{app:2}
Recall that $\mathcal{R} = \{(x,y)\in [0,1]^2: y\leq x$ and $y\geq \frac{2r+1}{r(r+1)}\cdot x - \frac{1}{r(r+1)}$ for $ x\in [\frac{1}{r+1}, \frac{1}{r}],r\in \mathbb{N}^+\}$.
For every $i \in [s]$, let $\mathcal{D}_{i}=cl(\{(x_{F_i}(T),y_{F_i}(T))| ~ T \mbox{ is a tournament with } t(D_{4,F_i},T)\neq 0\})$.
The set $\mathcal{R}$ is the convex hull of the points $\{(\frac{1}{r},\frac{1}{r^2})|r\in \mathbb{N}^+\}$.

We first prove the following key lemma.

\begin{lem}\label{R}
    For every tournament $T$ with $t(D_{4,F_i^{\dagger \bullet \bullet}},T)\neq 0$, if $x_{F_i}(T) \in [\frac{1}{r+1},\frac{1}{r}]$ for some integer $r$, then we have
   \[y_{F_i}(T)\geq \frac{2r+1}{r(r+1)}\cdot x_{F_i}(T) - \frac{1}{r(r+1)}.\]
\end{lem}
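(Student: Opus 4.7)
The plan is to translate the claim into a pure power-sum inequality and then reduce it to an extremal two-value case. By Lemma~\ref{equ:t(C_ell)}, $t(D_{4\ell,F_i^{\dagger \bullet \bullet}},T) = \sum_j \lambda_j^{4\ell}$ where the $\lambda_j$ are the real eigenvalues of the symmetric matrix $M_i(T)$. Setting $\mu_j := \lambda_j^4 \geq 0$ and using the scale-invariance of $x_{F_i}(T)$ and $y_{F_i}(T)$ to normalize $\sum_j \mu_j = 1$ (which is legitimate because $t(D_{4,F_i^{\dagger \bullet \bullet}},T) \neq 0$), the lemma becomes the following claim for nonnegative reals: if $\sum_j \mu_j = 1$ and $x := \sum_j \mu_j^2 \in [1/(r+1),\, 1/r]$, then $\sum_j \mu_j^3 \geq [(2r+1)x - 1]/[r(r+1)]$. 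Using $\sum_j \mu_j = 1$, this is equivalent to
\[\sum_j \mu_j(r\mu_j - 1)\bigl((r+1)\mu_j - 1\bigr) \geq 0,\]
which cannot be proved term by term because each summand is negative exactly when $\mu_j \in (1/(r+1),\, 1/r)$.

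\textbf{Reduction to two values.} I would next reduce to the extremal case by Lagrange multipliers applied to the minimization of $\sum_j \mu_j^3$ subject to the constraints $\sum_j \mu_j = 1$ and $\sum_j \mu_j^2 = x$. At any strictly positive coordinate the Lagrange condition reads $3\mu_j^2 = \alpha + 2\beta\mu_j$, which is quadratic in $\mu_j$ and therefore admits at most two solutions. Consequently the minimum is attained by a configuration whose positive entries take at most two distinct values, so it suffices to verify the inequality on configurations consisting of $k$ copies of $a$ and $m$ copies of $b$ with $a \geq b \geq 0$ and $k, m \in \mathbb{Z}_{\geq 1}$ (plus possibly some zeros, which contribute nothing).

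\textbf{The two-value case.} A direct expansion using $ka+mb=1$ and $ka^2+mb^2=x$ yields
\[\sum_j \mu_j(r\mu_j - 1)\bigl((r+1)\mu_j - 1\bigr) = k\,h(a) + m\,h(b), \qquad h(\mu) := \mu(r\mu-1)\bigl((r+1)\mu-1\bigr),\]
so the goal is $k h(a) + m h(b) \geq 0$. The Cauchy--Schwarz inequality applied to the $k+m$ nonzero entries gives $x \geq 1/(k+m)$, which forces the integer $k+m$ to satisfy $k+m \geq r+1$ whenever $x < 1/r$. This integrality is the key input: it excludes the possibility that both $a$ and $b$ lie in the bad interval $(1/(r+1),\, 1/r)$, since otherwise $1 = ka+mb$ would lie strictly between $(k+m)/(r+1)$ and $(k+m)/r$, forcing $k+m \in (r, r+1)$. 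In the remaining cases either $h(a), h(b) \geq 0$ and the sum is trivially nonnegative, or exactly one of them is negative and must be dominated by the other. I would handle this decisive case using the explicit parameterization $a = (1+mt)/N$, $b = (1-kt)/N$ with $N := k+m$ and $t := \sqrt{(Nx-1)/(km)}$: substituting into $k h(a) + m h(b)$ produces a polynomial in $t$ that can be shown to factor into a product of terms each manifestly nonnegative on the admissible range $x \in [1/(r+1),\, 1/r]$. I expect the main technical obstacle to be this final factorization and its uniform verification across all admissible integer pairs $(k,m)$.
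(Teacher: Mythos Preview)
Your reduction to a pure power-sum inequality via Lemma~\ref{equ:t(C_ell)} and the substitution $\mu_j=\lambda_j^4$ is exactly how the paper begins, and your reformulation as $\sum_j \mu_j(r\mu_j-1)\bigl((r+1)\mu_j-1\bigr)\ge 0$ is correct. From that point on, however, your route and the paper's diverge.

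The paper does \emph{not} run a Lagrange-multiplier/two-value reduction. Instead it passes through Newton's identities to rewrite the problem in terms of the elementary symmetric functions $e_1,e_2,e_3$, and then invokes a lemma imported from~\cite{BRW} (stated here as Lemma~\ref{2.10*}): any linear inequality $c_2e_2(\mathbf{x})+c_3e_3(\mathbf{x})\ge 0$ on $\{\mathbf{x}\in\mathbf{X}:e_1(\mathbf{x})=\alpha\}$ holds iff it holds at the equal configurations $x_1=\cdots=x_m=\alpha/m$. Under the affine change coming from Newton's identities this says precisely that the convex hull of $\bigl(p_2/\alpha^2,p_3/\alpha^3\bigr)$ is the convex hull of $\{(1/m,1/m^2):m\in\mathbb{N}^+\}$ (Lemma~\ref{2.11}). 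The line in the statement is then simply the chord of this convex set through $(1/r,1/r^2)$ and $(1/(r+1),1/(r+1)^2)$, so the inequality drops out with no case analysis at all.

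Your Lagrange reduction to two-value configurations is sound, and your integrality observation ruling out the case where both $a,b$ lie in $(1/(r+1),1/r)$ is nice. But the step you flag as ``the main technical obstacle'' --- showing $k\,h(a)+m\,h(b)\ge 0$ when exactly one of $a,b$ lies in the bad interval --- is genuinely left open in your write-up. With your parameterization $a=(1+mt)/N$, $b=(1-kt)/N$ one gets
\[
N^2\bigl(k\,h(a)+m\,h(b)\bigr)=(N-r)(N-r-1)+km\bigl[3r(r+1)-(2r+1)N\bigr]t^2+r(r+1)km(m-k)t^3,
\]
a cubic in $t$ whose sign is not obvious and does not factor uniformly in $(k,m)$; one has to combine the range restriction $kmt^2\le N/r-1$ with the integrality $N\ge r+1$ to push it through. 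This can be done case by case, but it is exactly the work that the paper sidesteps by quoting Lemma~\ref{2.10*}. In effect you are trying to reprove that BRW lemma from scratch in the special instance needed here; the paper's packaging via elementary symmetric polynomials is what makes the argument short.
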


We need several results in~\cite{BRW} to prove this lemma.
Let $\mathbf{X}$ be the space of infinite vectors $\mathbf{x}=(x_1,x_2,...)$, where $x_1\geq x_2\geq ...\geq 0$ are reals.
Let $e_j(\mathbf{x}):=\sum_{1\leq i_1<...< i_j}x_{i_1}\cdots x_{i_j}$ be the $j$-th elementary symmetric polynomial and $p_j(\mathbf{x}):=\sum_ix_i^j$ be the $j$-th power sum.
Note that when $\mathbf{x}\in \mathbf{X}$, all these $e_j(\mathbf{x})$ and $p_j(\mathbf{x})$ are well defined.

\begin{lem}[\cite{BRW}, Claim 2.10]\label{2.10}
For any $c_2,c_3\in \mathbb{R}$, we have that $c_2e_2(\mathbf{x})+c_3e_3(\mathbf{x})\geq 0$ for every $\mathbf{x}\in \mathbf{X}$ and $e_1(\mathbf{x})=1$ if and only if $c_2e_2(\mathbf{x})+c_3e_3(\mathbf{x})\geq 0$ for $x_1=x_2=...=x_m=\frac{1}{m}$ and $x_{m+1}=x_{m+2}=...=0$ for every integer $m\geq 1$.
\end{lem}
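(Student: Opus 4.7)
The forward direction is immediate: each uniform vector $\mathbf{x}_m = (\tfrac{1}{m}, \ldots, \tfrac{1}{m}, 0, 0, \ldots)$ (with $m$ copies of $1/m$) lies in $\mathbf{X}$ and satisfies $e_1(\mathbf{x}_m) = 1$, so the general inequality specializes to each $\mathbf{x}_m$. The content of the lemma is the converse: assuming $c_2 e_2(\mathbf{x}_m) + c_3 e_3(\mathbf{x}_m) \geq 0$ for every $m \in \mathbb{N}^+$, I want to deduce $c_2 e_2(\mathbf{x}) + c_3 e_3(\mathbf{x}) \geq 0$ for every $\mathbf{x} \in \mathbf{X}$ with $e_1(\mathbf{x}) = 1$. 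By continuity, truncating $\mathbf{x}$ to its first $N$ coordinates and rescaling to restore $e_1 = 1$ yields a sequence whose $(e_2, e_3)$ pairs converge to $(e_2(\mathbf{x}), e_3(\mathbf{x}))$, so it suffices to handle the case where $\mathbf{x}$ has only finitely many nonzero entries.

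Assume $\mathbf{x}$ is supported on $\{1,\ldots,n\}$ and set $F(\mathbf{x}) := c_2 e_2(\mathbf{x}) + c_3 e_3(\mathbf{x})$. Then $F$ is continuous on the compact simplex $\Delta_n := \{(x_1,\ldots,x_n) \geq 0 : \sum_i x_i = 1\}$ and attains a minimum at some $\mathbf{x}^*$ with support $S$. Applying Lagrange multipliers on the relative interior of the face of $\Delta_n$ over $S$, each $x_i^*$ with $i \in S$ satisfies the same equation
\[
c_2(1 - x_i^*) + c_3\bigl(e_2(\mathbf{x}^*) - x_i^* + (x_i^*)^2\bigr) = \mu,
\]
which is quadratic in $x_i^*$. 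Hence the nonzero coordinates of $\mathbf{x}^*$ take at most two distinct values. If only one value appears (in particular when $c_3 = 0$, where the equation degenerates to linear), then $\mathbf{x}^* = \mathbf{x}_{|S|}$ and $F(\mathbf{x}^*) = F(\mathbf{x}_{|S|}) \geq 0$ by hypothesis, completing that case.

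The remaining case is when the nonzero coordinates take two distinct values $a > b > 0$ with multiplicities $p, q$ satisfying $pa + qb = 1$. Eliminating $a = (1 - qb)/p$, the restriction of $F$ to this $(p,q)$-family becomes a one-variable polynomial $G_{p,q}(b)$ of degree at most three on $b \in [0, 1/(p+q)]$, whose endpoint values are exactly $G_{p,q}(0) = F(\mathbf{x}_p)$ and $G_{p,q}(1/(p+q)) = F(\mathbf{x}_{p+q})$. The goal is then to show that the minimum of $G_{p,q}$ on this interval is attained at an endpoint, so that $F(\mathbf{x}^*) \geq \min\{F(\mathbf{x}_p), F(\mathbf{x}_{p+q})\} \geq 0$. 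I expect the main obstacle to lie in this last step: ruling out interior minima of the cubic $G_{p,q}$. My plan is either (i) to analyze the sign of $G_{p,q}'$ directly, using the Lagrangian relation on $\mathbf{x}^*$ to constrain critical points, or (ii) to run an equalization/perturbation argument in which merging the two values of $\mathbf{x}^*$ produces a configuration with $F$-value no larger and strictly fewer distinct entries, so that iteration terminates at a uniform $\mathbf{x}_m$. Either route reduces the minimization to the uniform family and yields the converse direction.
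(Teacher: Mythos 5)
Your forward direction, the truncation-and-rescaling reduction to finitely supported $\mathbf{x}$, and the Lagrange-multiplier step showing that a minimizer $\mathbf{x}^*$ of $F=c_2e_2+c_3e_3$ on the simplex has at most two distinct nonzero values are all sound. But the write-up stops exactly where the content of the lemma begins: you never show that in the genuinely two-valued case $F(\mathbf{x}^*)$ is bounded below by the values at uniform configurations. Since a genuinely two-valued $\mathbf{x}^*$ is an \emph{interior} point of its $(p,q)$-family, the cubic $G_{p,q}$ attains its minimum over $[0,1/(p+q)]$ at an interior critical point, and nothing you have written excludes $G_{p,q}(b^*)<\min\{G_{p,q}(0),G_{p,q}(1/(p+q))\}$. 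The two ``plans'' you offer are not arguments: plan (ii) in particular assumes the very fact at issue (that merging the two values does not increase $F$). So the converse direction is not yet proved. (For what it is worth, the paper itself gives no proof either; the lemma is imported from \cite{BRW}, so your attempt has to be judged on its own.)

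The gap can be closed far more cheaply than by either plan, because the hypothesis already pins down $(c_2,c_3)$. Taking $m=2$ gives $\tfrac{1}{4}c_2\ge 0$, so $c_2\ge 0$; letting $m\to\infty$ gives $\tfrac{c_2}{2}+\tfrac{c_3}{6}\ge 0$, i.e.\ $3c_2+c_3\ge 0$. On the other hand, for every $\mathbf{x}\in\mathbf{X}$ with $e_1(\mathbf{x})=1$ one has the identity
\[
e_2 \;=\; e_1e_2 \;=\; 3e_3+\sum_{j<k}x_jx_k(x_j+x_k),
\]
all sums converging since $e_2\le e_1^2/2$; hence $0\le e_3\le e_2/3$. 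If $c_3\ge 0$ the conclusion is immediate from $c_2,e_2,e_3\ge 0$; if $c_3<0$ then $c_2e_2+c_3e_3\ge c_2e_2+\tfrac{c_3}{3}e_2=\bigl(c_2+\tfrac{c_3}{3}\bigr)e_2\ge 0$. This two-line argument replaces the entire optimization and cubic analysis. If you prefer to keep your route, you must actually carry out the endpoint analysis of $G_{p,q}$ (or the equalization step); until then the proof is incomplete.
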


We need the following version of Lemma~\ref{2.10}, the proof of which is the same as Lemma~\ref{2.10} and we omit it here.

\begin{lem}\label{2.10*}
Let $\alpha<1$ be a positive real. For any $c_2,c_3\in \mathbb{R}$, we have that $c_2e_2(\mathbf{x})+c_3e_3(\mathbf{x})\geq 0$ for every $\mathbf{x}\in \mathbf{X}$ and $e_1(\mathbf{x})=\alpha$ if and only if $c_2e_2(\mathbf{x})+c_3e_3(\mathbf{x})\geq 0$ for $x_1=x_2=...=x_m=\frac{\alpha}{m}$ and $x_{m+1}=x_{m+2}=...=0$ for every integer $m\geq 1$.
\end{lem}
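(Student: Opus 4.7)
The plan is to translate the inequality into power-sum coordinates using Lemma~\ref{equ:t(C_ell)}, then into elementary-symmetric coordinates via Newton's identities, and finally apply the extreme-point reduction of Lemma~\ref{2.10*}. By Lemma~\ref{equ:t(C_ell)}, $t(D_{4\ell,F_i^{\dagger \bullet \bullet}},T)=\sum_j \lambda_j^{4\ell}$ where $\lambda_j$ are the (real) eigenvalues of the symmetric matrix $M_i(T)$. Setting $\mu_j:=\lambda_j^4 \ge 0$, each density $t(D_{4\ell,F_i^{\dagger \bullet \bullet}},T)$ becomes the power sum $\sum_j \mu_j^\ell$. Since $t(D_{4,F_i^{\dagger \bullet \bullet}},T)=\sum_j \mu_j>0$ by hypothesis, I normalize by setting $x_j:=\mu_j/\sum_k \mu_k$, so that $\mathbf{x}:=(x_1,x_2,\ldots)\in\mathbf{X}$ satisfies $e_1(\mathbf{x})=p_1(\mathbf{x})=1$, $x_{F_i}(T)=p_2(\mathbf{x})=\sum_j x_j^2$, and $y_{F_i}(T)=p_3(\mathbf{x})=\sum_j x_j^3$.

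Next, using Newton's identities $p_2=1-2e_2$ and $p_3=1-3e_2+3e_3$ (specializing $e_1=1$), a direct expansion shows that with $L_r(x):=\frac{2r+1}{r(r+1)}x-\frac{1}{r(r+1)}$,
\[
r(r+1)\bigl[p_3-L_r(p_2)\bigr]=r(r-1)-(3r+2)(r-1)\,e_2(\mathbf{x})+3r(r+1)\,e_3(\mathbf{x}).
\]
Thus Lemma~\ref{R} reduces to proving
\[
Q(\mathbf{x}):=r(r-1)-(3r+2)(r-1)\,e_2(\mathbf{x})+3r(r+1)\,e_3(\mathbf{x})\;\ge\;0
\]
for every $\mathbf{x}\in\mathbf{X}$ with $e_1(\mathbf{x})=1$. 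It is worth noting that the hypothesis $x_{F_i}(T)\in[1/(r+1),1/r]$ is actually not needed for what follows, since the inequality will hold globally on the slice $e_1=1$.

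Then I would apply Lemma~\ref{2.10*} in the $\alpha=1$ form (i.e., Lemma~\ref{2.10}). Because $e_1(\mathbf{x})=1$, the constant term $r(r-1)$ can be rewritten as $r(r-1)\,e_1^3$, so $Q$ is a homogeneous degree-three symmetric polynomial, i.e., a linear combination of $e_1^3,e_1e_2,e_3$. The extreme-point analysis underlying Lemma~\ref{2.10*} shows that the convex hull of feasible $(e_2,e_3)$ under $e_1=1$ has extreme points precisely at the uniform distributions $x_1=\cdots=x_m=1/m$, $x_{m+1}=\cdots=0$, and the same argument applies verbatim to affine (constant-augmented) linear combinations of $(e_2,e_3)$. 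Hence it suffices to verify $Q\ge 0$ at each such uniform distribution.

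Finally, at the uniform distribution with $m$ equal nonzero entries one has $e_2=(m-1)/(2m)$ and $e_3=(m-1)(m-2)/(6m^2)$. Substituting into $Q$ and multiplying through by $2m^2$ yields
\[
2m^2r(r-1)-m(m-1)(3r+2)(r-1)+r(r+1)(m-1)(m-2),
\]
a quadratic in $m$ with leading coefficient $2$ that vanishes at $m=r$ and $m=r+1$, and therefore factors as $2(m-r)(m-r-1)$. Since $m$ is a positive integer, this quantity is always nonnegative, with equality exactly when $m=r$ or $m=r+1$ (corresponding to the extremal points $(1/r,1/r^2)$ and $(1/(r+1),1/(r+1)^2)$), which finishes the proof. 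The main technical obstacle I expect to encounter is making precise the extension of Lemma~\ref{2.10*} from purely linear to affine forms in $(e_2,e_3)$; this should follow from the same extreme-point reduction, but needs to be articulated with care.
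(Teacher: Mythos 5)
Your proposal does not prove the statement it was asked to prove. The target is Lemma~\ref{2.10*} itself: the assertion that nonnegativity of $c_2e_2(\mathbf{x})+c_3e_3(\mathbf{x})$ on the whole slice $\{\mathbf{x}\in\mathbf{X}: e_1(\mathbf{x})=\alpha\}$ is equivalent to its nonnegativity at the uniform points $x_1=\cdots=x_m=\alpha/m$. What you have written is instead a (reasonable, and numerically correct) derivation of Lemma~\ref{R} \emph{assuming} Lemma~\ref{2.10*} (in its $\alpha=1$ form) as a black box --- you say explicitly that you will ``apply the extreme-point reduction of Lemma~\ref{2.10*}.'' The entire mathematical content of the lemma --- why checking the uniform distributions suffices, i.e., why the extreme points of the closed convex hull of $\{(e_2(\mathbf{x}),e_3(\mathbf{x})):e_1(\mathbf{x})=\alpha\}$ are exactly the uniform points --- is never argued. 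So as a proof of Lemma~\ref{2.10*} the proposal has a gap that coincides with the whole statement.

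For the record, the paper handles this lemma by noting that the proof of Lemma~\ref{2.10} (Claim~2.10 of \cite{BRW}) goes through verbatim with $\alpha$ in place of $1$. The cleanest route is a homogeneity reduction: the map $\mathbf{x}\mapsto\mathbf{y}=\mathbf{x}/\alpha$ is a bijection between $\{e_1=\alpha\}$ and $\{e_1=1\}$ carrying uniform points to uniform points, and since $e_2(\mathbf{x})=\alpha^2e_2(\mathbf{y})$ and $e_3(\mathbf{x})=\alpha^3e_3(\mathbf{y})$, the statement for $(c_2,c_3)$ on the slice $e_1=\alpha$ is exactly Lemma~\ref{2.10} applied with coefficients $(c_2\alpha^2,c_3\alpha^3)$. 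Either that rescaling or a rerun of the extremality argument from \cite{BRW} is what a proof of this lemma must contain; your computation of $Q$ and its factorization as $2(m-r)(m-r-1)$ belongs to the proof of Lemma~\ref{R}, not here. (Your own flagged ``technical obstacle'' --- extending from linear to affine forms in $(e_2,e_3)$ --- is likewise a concern for Lemma~\ref{R}, not for the present statement, and would also need the convex-hull description rather than the bare inequality form of Lemma~\ref{2.10}.)
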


This lemma implies that the convex hull of $\{(e_2(\mathbf{x}),e_3(\mathbf{x}))|\mathbf{x}\in \mathbf{X},e_1(\mathbf{x})=\alpha\}$ is the same as the convex hull of $\{(e_2(\mathbf{x}),e_3(\mathbf{x}))|x_1=x_2=...=x_m=\frac{\alpha}{m} \mbox{ and } x_{m+1}=x_{m+2}=...=0\}$ or equivalently, as the convex hull of $\{(\frac{\alpha^2(m-1)}{2m},\frac{\alpha^3(m-1)(m-2)}{6m^2})|m\in \mathbb{N}^+\}$. The following lemma is an analogy of Claim 2.11 in~\cite{BRW}.

\begin{lem}\label{2.11}
Let $\alpha$ be a positive real. The convex hull of $\{(\frac{p_2(\mathbf{x})}{\alpha^2},\frac{p_3(\mathbf{x})}{\alpha^3})|\mathbf{x}\in \mathbf{X},p_1(\mathbf{x})=\alpha\}$ is equal to the convex hull of $\{(\frac{1}{m},\frac{1}{m^2})|m\in \mathbb{N}^+\}.$
\end{lem}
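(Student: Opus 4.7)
The plan is to reduce the general-$\alpha$ statement to the $\alpha=1$ case (Claim 2.11 of BRW) by a one-line scaling of $\mathbf{x}$, and, as a self-contained backup, to derive it directly from Lemma~\ref{2.10*} via Newton's identities.

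For the scaling reduction, given any $\mathbf{x}\in\mathbf{X}$ with $p_1(\mathbf{x})=\alpha$, I would set $\mathbf{y}:=\mathbf{x}/\alpha$; then $\mathbf{y}\in\mathbf{X}$ with $p_1(\mathbf{y})=1$, and by homogeneity $p_j(\mathbf{x})=\alpha^j p_j(\mathbf{y})$ for all $j\geq 1$. In particular
\begin{equation*}
\bigl(p_2(\mathbf{x})/\alpha^2,\; p_3(\mathbf{x})/\alpha^3\bigr)=\bigl(p_2(\mathbf{y}),\,p_3(\mathbf{y})\bigr),
\end{equation*}
and the map $\mathbf{x}\mapsto\mathbf{y}$ is a bijection between the slices $\{p_1=\alpha\}$ and $\{p_1=1\}$ of $\mathbf{X}$. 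Hence the two sets $\{(p_2(\mathbf{x})/\alpha^2,p_3(\mathbf{x})/\alpha^3):p_1(\mathbf{x})=\alpha\}$ and $\{(p_2(\mathbf{y}),p_3(\mathbf{y})):p_1(\mathbf{y})=1\}$ coincide as subsets of $\mathbb{R}^2$, so their convex hulls are equal; invoking Claim~2.11 of \cite{BRW} for the right-hand set identifies this convex hull with that of $\{(1/m,1/m^2):m\in\mathbb{N}^+\}$.

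Alternatively, to give a fully self-contained proof in the spirit of BRW's argument, I would pass through elementary symmetric polynomials using Newton's identities $2e_2=p_1^2-p_2$ and $3e_3=e_2 p_1-p_1 p_2+p_3$. Under the constraint $p_1=\alpha$ these rearrange into the invertible affine map
\begin{equation*}
(e_2,e_3)\;\longmapsto\;(p_2,p_3)=\bigl(\alpha^2-2e_2,\;\alpha^3-3\alpha e_2+3e_3\bigr)
\end{equation*}
on $\mathbb{R}^2$. By Lemma~\ref{2.10*} (together with the standard supporting-hyperplane argument used in BRW's passage from their Claim~2.10 to Claim~2.11), the closed convex hull of $\{(e_2(\mathbf{x}),e_3(\mathbf{x})):p_1(\mathbf{x})=\alpha\}$ equals that of the ``flat'' extremal points $(e_2,e_3)=\bigl(\alpha^2(m-1)/(2m),\,\alpha^3(m-1)(m-2)/(6m^2)\bigr)$. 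Since invertible affine maps preserve convex hulls, applying the affine map above transfers this conclusion to $(p_2,p_3)$; a short direct computation then verifies that each flat vector $x_1=\cdots=x_m=\alpha/m$ satisfies $p_2=\alpha^2/m$ and $p_3=\alpha^3/m^2$, so after normalization these extremal points are precisely $(1/m,1/m^2)$. The only delicate step on this route is the duality conversion from the linear-through-origin statement of Lemma~\ref{2.10*} to convex-hull equality, and I expect this to be the main (but routine) obstacle; the scaling approach bypasses it entirely by offloading it onto BRW's established $\alpha=1$ case.
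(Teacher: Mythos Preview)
Your second route---Newton's identities plus the affine map $(e_2,e_3)\mapsto(p_2/\alpha^2,p_3/\alpha^3)$, combined with the extremal description of $\{(e_2,e_3):e_1=\alpha\}$ furnished by Lemma~\ref{2.10*}---is exactly the paper's proof; the paper computes $p_2/\alpha^2=1-2e_2/\alpha^2$ and $p_3/\alpha^3=1-3e_2/\alpha^2+3e_3/\alpha^3$, notes this is an invertible affine transformation, and checks that it carries $\bigl(\alpha^2(m-1)/(2m),\alpha^3(m-1)(m-2)/(6m^2)\bigr)$ to $(1/m,1/m^2)$.

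Your first route, the scaling $\mathbf{y}=\mathbf{x}/\alpha$, is correct and genuinely shorter than what the paper does: since $p_j$ is homogeneous of degree $j$, the map $\mathbf{x}\mapsto\mathbf{x}/\alpha$ identifies the $\alpha$-slice with the $1$-slice as subsets of $\mathbf{X}$ and sends $(p_2/\alpha^2,p_3/\alpha^3)$ to $(p_2,p_3)$, so one may quote \cite{BRW} Claim~2.11 directly. The paper instead redoes the Newton-identity argument for general $\alpha$, presumably to keep Appendix~A self-contained; the cost is that it needs Lemma~\ref{2.10*} (and its implicit duality step from linear inequalities to convex-hull equality), which your scaling bypasses entirely.
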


\begin{proof}
Through Newton's identities, we have that $p_2(\mathbf{x})=(e_1(\mathbf{x}))^2-2e_2(\mathbf{x})$, and $p_3(\mathbf{x})=(e_1(\mathbf{x}))^3-3e_1(\mathbf{x})e_2(\mathbf{x})+3e_3(\mathbf{x}).$
Since $e_1(\mathbf{x})=p_1(\mathbf{x})=\alpha$, we have that $\frac{p_2(\mathbf{x})}{\alpha^2}=1-\frac{2e_2(\mathbf{x})}{\alpha^2}$ and $\frac{p_3(\mathbf{x})}{\alpha^3}=1-\frac{3e_2(\mathbf{x})}{\alpha^2}+\frac{3e_3(\mathbf{x})}{\alpha^3}$. This shows that the convex hull of $\{(\frac{p_2(\mathbf{x})}{\alpha^2},\frac{p_3(\mathbf{x})}{\alpha^3})|\mathbf{x}\in \mathbf{X},p_1(\mathbf{x})=\alpha\}$ will be an affine transformation of the convex hull of $\{(e_2(\mathbf{x}),e_3(\mathbf{x}))|\mathbf{x}\in \mathbf{X},e_1(\mathbf{x})=\alpha\}$.
Under this map, for any $m\in \mathbb{N}$, the point $(\frac{\alpha^2(m-1)}{2m},\frac{\alpha^3(m-1)(m-2)}{6m^2})$ is mapped to $(\frac{1}{m},\frac{1}{m^2})$. This completes the proof of this lemma.
\end{proof}

We then prove the following lemma which implies Lemma~\ref{R}.

\begin{lem}\label{2.12}
The convex hull of $\{(x_{F_i}(T),y_{F_i}(T))|~ T \mbox{ is a tournament with } t(D_{4,F_i^{\dagger \bullet \bullet}},T)\neq 0\}$ is contained in the convex hull of $\{(\frac{1}{m},\frac{1}{m^2})|m\in \mathbb{N}^+\}$.
\end{lem}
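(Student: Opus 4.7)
The plan is to show that each individual point $(x_{F_i}(T), y_{F_i}(T))$ for a valid tournament $T$ already lies in the convex hull of $\{(1/m, 1/m^2) : m \in \mathbb{N}^+\}$; the statement about convex hulls then follows immediately, since a set contained in a convex set has convex hull still contained in it. The main tool will be Lemma~\ref{2.11}, which has already packaged the required comparison of power sums with the discrete points $(1/m, 1/m^2)$; the only task is to set up the reduction from digraph homomorphism densities to power sums.

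Concretely, I would first fix a tournament $T$ with $t(D_{4,F_i^{\dagger \bullet \bullet}},T)\neq 0$ and invoke Lemma~\ref{equ:t(C_ell)} to write $t(D_{\ell,F_i^{\dagger \bullet \bullet}},T)=\sum_{j=1}^{|V(T)|}\lambda_j^{\ell}$, where $\lambda_1,\ldots,\lambda_{|V(T)|}$ are the real eigenvalues of the symmetric matrix $M_i(T)$ from Definition~\ref{dfn:M}. Setting $x_j := \lambda_j^{4} \ge 0$ for each $j$, reordering so that $x_1 \ge x_2 \ge \cdots$, and padding with zeros yields a vector $\mathbf{x} = (x_1, x_2, \ldots) \in \mathbf{X}$. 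For $\ell \in \{1,2,3\}$ one then has
\[
p_\ell(\mathbf{x}) \;=\; \sum_j x_j^{\ell} \;=\; \sum_j \lambda_j^{4\ell} \;=\; t(D_{4\ell,F_i^{\dagger \bullet \bullet}},T).
\]
Putting $\alpha := p_1(\mathbf{x}) = t(D_{4,F_i^{\dagger \bullet \bullet}},T)$, which is strictly positive by hypothesis (as a nonzero sum of fourth powers), the definitions of $x_{F_i}(T)$ and $y_{F_i}(T)$ become
\[
x_{F_i}(T) \;=\; \frac{p_2(\mathbf{x})}{\alpha^2}, \qquad y_{F_i}(T) \;=\; \frac{p_3(\mathbf{x})}{\alpha^3}.
\]

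Applying Lemma~\ref{2.11} with this choice of $\alpha$ shows that the point $\bigl(p_2(\mathbf{x})/\alpha^2,\; p_3(\mathbf{x})/\alpha^3\bigr)$ lies in the convex hull of $\{(1/m, 1/m^2) : m \in \mathbb{N}^+\}$. Hence so does $(x_{F_i}(T), y_{F_i}(T))$ for every admissible $T$, which is precisely what is needed to conclude the lemma by taking convex hulls.

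I do not foresee a substantive obstacle: the symmetrization trick used to build $F_i^{\dagger \bullet \bullet}$ was engineered precisely so that $M_i(T)$ is symmetric, forcing the eigenvalues $\lambda_j$ to be real and making $\lambda_j^4$ nonnegative, which is exactly what allows the reduction to $\mathbf{X}$. The only minor bookkeeping is to verify the reordering/padding step so that $\mathbf{x}\in\mathbf{X}$ and to note that $\alpha>0$, both immediate; the arithmetic identity between $t(D_{4\ell})$ and $p_\ell(\mathbf{x})$ is forced by the exponent-$4$ choice in the definitions of $x_{F_i}$ and $y_{F_i}$.
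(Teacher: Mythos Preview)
Your proposal is correct and follows essentially the same approach as the paper: both arguments set $x_j=\lambda_j^4$ for the real eigenvalues $\lambda_j$ of the symmetric matrix $M_i(T)$, identify $t(D_{4\ell,F_i^{\dagger\bullet\bullet}},T)$ with $p_\ell(\mathbf{x})$, and then invoke Lemma~\ref{2.11} with $\alpha=p_1(\mathbf{x})$ to place $(x_{F_i}(T),y_{F_i}(T))$ inside the convex hull of $\{(1/m,1/m^2):m\in\mathbb{N}^+\}$. The only cosmetic difference is that the paper phrases the reduction by first fixing a level set $t(D_{4,F_i^{\dagger\bullet\bullet}},T)=\alpha$, whereas you treat each tournament with its own $\alpha$; the content is identical.
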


\begin{proof}
Let $\alpha\in (0,1)$. Then the considered convex hull is the same as the convex hull of $\left\{\left(\frac{t(D_{8,F_i^{\dagger \bullet \bullet}},T)}{\alpha^2},\frac{t(D_{12,F_i^{\dagger \bullet \bullet}},T)}{\alpha^3}\right)|~ T \mbox{ is a tournament with } t(D_{4,F_i^{\dagger \bullet \bullet}},T)=\alpha\right\}$.
For every tournament $T$ with $t(D_{4,F_i^{\dagger \bullet \bullet}},T)\neq 0,$ let $M_i(T)$ be the matrix defined in Definition~\ref{dfn:M}.
Let $\mathbf{\lambda}=(\lambda_1,\lambda_2,...,\lambda_{|V(T)|},0,0,...)$, where $\lambda_1,\lambda_2,...,\lambda_{|V(T)|}$ are the eigenvalues of the matrix $M_i(T)$ with $|\lambda_1| \geq |\lambda_2|\geq...\geq |\lambda_{|V(T)|}|$. Let $x_j=\lambda_j^4$ for every $j$, we have that $\mathbf{x}=(x_1,x_2,...)\in \mathbf{X}$.
Note that by Lemma~\ref{equ:t(C_ell)}, it holds that $t(D_{\ell,F_i^{\dagger \bullet \bullet}},T)=t(C_\ell, M_i(T))=\sum_{j=1}^{|V(T)|}\lambda_j^{\ell}.$ Thus, we have that
$t(D_{4,F_i^{\dagger \bullet \bullet}},T)=p_1(\mathbf{x})=p_4(\mathbf{\lambda})$, $t(D_{8,F_i^{\dagger \bullet \bullet}},T)=p_2(\mathbf{x})=p_8(\mathbf{\lambda})$ and $t(D_{12,F_i^{\dagger \bullet \bullet}},T)=p_3(\mathbf{x})=p_{12}(\mathbf{\lambda})$.
Then, by Lemma \ref{2.11}, the convex hull of $$\left\{\left(\frac{t(D_{8,F_i^{\dagger \bullet \bullet}},T)}{\alpha^2},\frac{t(D_{12,F_i^{\dagger \bullet \bullet}},T)}{\alpha^3}\right)|~ T \mbox{ is a tournament with }
t(D_{4,F_i^{\dagger \bullet \bullet}},T)=\alpha\right\}$$ is contained in the convex hull of $\{(\frac{1}{m},\frac{1}{m^2})|m\in \mathbb{N}^+\}$, which completes the proof.
\end{proof}

Now, we can prove item 2 in Lemma~\ref{copies}.

\begin{proof}[Proof of item 2 in Lemma~\ref{copies}]
 We will prove that $\mathcal{D}_{i}\subseteq \mathcal{R}$ for every $i \in [s]$.
 Let $T$ be a tournament with $t(D_{4,F_i^{\dagger \bullet \bullet}},T)\neq 0$.
 Note that we have that $t(D_{\ell,F_i^{\dagger \bullet \bullet}},T)=t(C_\ell,M_i(T))$, which implies that $x_{F_i}(T)=\frac{\sum_j \lambda_j^8}{(\sum_j \lambda_j^4)^2},y_{F_i}(T)=\frac{\sum_j \lambda_j^{12}}{(\sum_j \lambda_j^4)^3}$, where $\lambda_j$'s are the eigenvalues of the matrix $M_i(T)$.
 By the fact $\sum_j \lambda_j^8 \leq (\sum_j \lambda_j^4)^2$ and $\sum_j \lambda_j^{12} \leq (\sum_j \lambda_j^4)^3$, we have that $0 \leq x_{F_i}(T),y_{F_i}(T)\leq 1$.
 By Lemma~\ref{R}, we obtain that \[y_{F_i}(T)\geq \frac{2r+1}{r(r+1)}\cdot x_{F_i}(T) - \frac{1}{r(r+1)}\] for every tournament $T$ with $t(D_{4,F_i^{\dagger \bullet \bullet}},T)\neq 0$ and every integer $r$ when $x_{F_i}(T) \in [\frac{1}{r+1},\frac{1}{r}]$.
 Thus, we prove that $\mathcal{D}_{i}\subseteq \mathcal{R}$ for every $i \in [s]$, which implies that $\mathcal{D}_{\leq s} \subseteq \mathcal{R}^s$.
 \end{proof}

\bigskip

{\it E-mail address:} mathsch@mail.ustc.edu.cn

\medskip

{\it E-mail address:} lyp\_inustc@mail.ustc.edu.cn

\medskip

{\it E-mail address:} jiema@ustc.edu.cn

\medskip

{\it E-mail address:} fan.wei@duke.edu

\end{document}